\documentclass[english]{article}
\usepackage[T1]{fontenc}
\usepackage[utf8]{inputenc}
\usepackage{lmodern}
\usepackage{amssymb,authblk}
\usepackage{mathtools}
\usepackage{amsthm}
\usepackage{stmaryrd}
\usepackage{mathrsfs}
\usepackage{amsfonts}
\usepackage{tdsfrmath}
\usepackage{faktor}
\usepackage{xcolor}
\usepackage{tcolorbox}
\usepackage[all,cmtip, color,matrix,arrow]{xy} 
\usepackage[linewidth=1pt,
middlelinecolor= black,
middlelinewidth=0.4pt,
roundcorner=1pt,
topline = false,
rightline = false,
bottomline = false,
rightmargin=0pt,
skipabove=0pt,
skipbelow=0pt,
leftmargin=-1cm,
innerleftmargin=1cm,
innerrightmargin=0pt,
innertopmargin=0pt,
innerbottommargin=0pt]{mdframed}

\usepackage{array}
\usepackage{nicematrix}
\usepackage[a4paper]{geometry}
\usepackage{shuffle}
\usepackage[english]{babel}
\usepackage{subcaption}
\usepackage[plainpages=false,pdfpagelabels,pagebackref]{hyperref} 
\usepackage{tikz-cd}
\usepackage{fp}
\usepackage{ifthen}
\usepackage{calc}
\usetikzlibrary{automata, positioning, arrows}

\newtheorem{innercustomgeneric}{\customgenericname}
\providecommand{\customgenericname}{}
\newcommand{\newcustomtheorem}[2]{%
	\newenvironment{#1}[1]
	{%
		\renewcommand\customgenericname{#2}%
		\renewcommand\theinnercustomgeneric{##1}%
		\innercustomgeneric
	}
	{\endinnercustomgeneric}
}
\newcustomtheorem{customdefi}{Definition}
\newcustomtheorem{customrq}{Remark}
\newcustomtheorem{customEgs}{Examples}
\newcustomtheorem{customEg}{Example}

\newcounter{thmcount}
\theoremstyle{plain}
\newtheorem{thm}[thmcount]{Theorem}
\newtheorem{Prop}[thmcount]{Proposition}
\newtheorem{Cor}[thmcount]{Corollary}
\newtheorem{Lemme}[thmcount]{Lemma}
\newtheorem{conj}[thmcount]{Conjecture}

\theoremstyle{definition}
\newtheorem{Eg}[thmcount]{\textbf{Example}}
\newtheorem{Egs}[thmcount]{\textbf{Examples}}
\newtheorem{defi}[thmcount]{Definition}
\newtheorem{Not}[thmcount]{Notation}
\newtheorem{Rq}[thmcount]{Remark}

\newtheorem{innercustomgenerictwo}{\customgenericname}
\providecommand{\customgenericname}{}
\newcommand{\newcustomtheoremplain}[2]{%
	\newenvironment{#1}[1]
	{%
		\renewcommand\customgenericname{#2}%
		\renewcommand\theinnercustomgenerictwo{##1}%
		\innercustomgenerictwo
	}
	{\endinnercustomgeneric}
}
\newcustomtheoremplain{customthm}{Theorem}
\newcustomtheoremplain{customlemma}{Lemma}
\newcustomtheoremplain{customprop}{Proposition}

\newcommand{\IEM}[2]{\llbracket #1,#2 \rrbracket}

\newcommand{\K}{\ensuremath{\mathbb{K}}}

\newcommand{\qsh}{quasi-shuffle}

\newcommand{\zetash}{\zeta_\shuffle}
\newcommand{\zetaTsh}{\zeta^T_\shuffle}
\newcommand{\zetaT}{\zeta^T}

\NewDocumentCommand{\MZVs}{ }{Multiple Zeta Values}

\NewDocumentCommand{\AZVs}{}{Arborified Zeta Values}

\DeclareMathOperator{\QSh}{Qsh}
\DeclareMathOperator{\Sh}{Sh}

\DeclareMathOperator{\dt}{dt}

\DeclareMathOperator{\ima}{Im}

\DeclareMathOperator{\flaten}{flat}
\DeclareMathOperator{\Flaten}{Flat}

\DeclareMathOperator{\B}{B}

\DeclareMathOperator{\Ker}{Ker}
\DeclareMathOperator{\lin}{lin}
\DeclareMathOperator{\Supp}{\mathrm{Supp}}

\newcommand{\calT}{\mathcal{T}}

\newcommand{\calW}{\mathcal{W}}

\newcommand{\W}{\mathcal{W}_{\N^*}}
\newcommand{\Wcv}{\mathcal{W}^{\mathrm{conv}}_{\N^*}}
\newcommand{\Wxy}{\mathcal{W}_{\{x,y\}}}
\newcommand{\Wxycv}{\mathcal{W}^{\mathrm{conv}}_{\{x,y\}}}
\newcommand{\WO}{\mathcal{W}_{\Omega}}

\newcommand{\PolW}{\Q\lbrack{\rm W}^{\rm conv}_{\N^*}\rbrack}
\newcommand{\PolT}{\Q\lbrack{\rm PT}^{\rm conv}_{\N^*}\rbrack}

\newcommand{\fraks}{\mathfrak{s}} 
\newcommand{\fraksPT}{\mathfrak{s}^{PT}} 

\newcommand{\PTN}{\text{PT}_{\N^*}}
\newcommand{\PT}{\text{PT}}
\newcommand{\PTO}{\text{PT}_{\Omega}}

\newcommand{\calPT}{\mathcal{PT}}
\newcommand{\calPTO}{\mathcal{PT}_\Omega}
\newcommand{\calPTN}{\mathcal{PT}_{\N^*}}
\newcommand{\calPTNconv}{\mathcal{PT}_{\N^*}^{\rm conv}}
\newcommand{\calPTxy}{\mathcal{PT}_{\{x,y\}}}
\newcommand{\calPTxyconv}{\mathcal{PT}_{\{x,y\}}^{\rm conv}}

\newcommand{\pr}{\operatorname{pr}}

\newcommand{\tdun}[1]{\begin{picture}(10,5)(-2,-1)
\put(0,0){\circle*{2}}
\put(2,-2){$\tiny #1$}
\end{picture}}

\newcommand{\tddeux}[2]{\begin{tikzpicture}[line cap=round,line join=round,>=triangle 45,x=0.2cm,y=0.2cm, anchor=base, baseline]
		\draw (0,0) node[right] {\footnotesize{$#1$}};
		\draw (0,0) -- (0,1);
		\draw (0,1) node[right] {\footnotesize{$#2$}};
		\filldraw[color=black] (0,0) circle (0.15);
		\filldraw[color=black] (0,1) circle (0.15);
\end{tikzpicture}}

\NewDocumentCommand{\labpoint}{m}{\begin{tikzpicture}[anchor =base, baseline, line cap=round,line join=round,>=triangle 45,x=0.2cm,y=0.2cm]
		\filldraw[color=black] (0,0) circle (0.15);
		\draw (0,0) node[right] {\footnotesize{$#1$}};		
\end{tikzpicture}}

\NewDocumentCommand{\labY}{m m m}{\raisebox{-0.4\height}{\begin{tikzpicture}[line cap=round,line join=round,>=triangle 45,x=0.4cm,y=0.4cm]
			\draw [line width=.5pt] (0.,1.)-- (-1.,2.);
			\draw [line width=.5pt] (0.,1.)-- (1.,2.);
			\draw[above] (0,1) node {\footnotesize{#1}};
			\draw[above] (1,2) node {\footnotesize{#3}};
			\draw[above] (-1,2) node {\footnotesize{#2}};
			\filldraw (0,1) circle (1pt);
			\filldraw (1,2) circle (1pt);
			\filldraw (-1,2) circle (1pt);
\end{tikzpicture}}}

\NewDocumentCommand{\labYr}{m m m m}{\raisebox{-0.4\height}{\begin{tikzpicture}[line cap=round,line join=round,>=triangle 45,x=0.4cm,y=0.4cm]
			\draw [line width=.5pt] (0.,1.)-- (-1.,2.);
			\draw [line width=.5pt] (0.,1.)-- (1.,2.);
			\draw [line width=.5pt] (1.,2.)-- (1.,3.);
			\draw[above] (0,1) node {\footnotesize{#1}};
			\draw[above] (-1,2) node {\footnotesize{#2}};
			\draw[left] (1,2) node {\footnotesize{#3}};
			\draw[above] (1,3) node {\footnotesize{#4}};
			\filldraw (0,1) circle (1pt);
			\filldraw (1,2) circle (1pt);
			\filldraw (1,3) circle (1pt);
			\filldraw (-1,2) circle (1pt);
\end{tikzpicture}}}

\NewDocumentCommand{\labYl}{m m m m}{\raisebox{-0.4\height}{\begin{tikzpicture}[line cap=round,line join=round,>=triangle 45,x=0.4cm,y=0.4cm]
			\draw [line width=.5pt] (0.,1.)-- (-1.,2.);
			\draw [line width=.5pt] (0.,1.)-- (1.,2.);
			\draw (-1,2) -- (-1,3);
			\draw[above] (0,1) node {\footnotesize{#1}};
			\draw[right] (-1,2) node {\footnotesize{#2}};
			\draw[above] (-1,3) node {\footnotesize{#3}};
			\draw[right] (1,2) node {\footnotesize{#4}};
			\filldraw (0,1) circle (1pt);
			\filldraw (1,2) circle (1pt);
			\filldraw (-1,3) circle (1pt);
			\filldraw (-1,2) circle (1pt);
\end{tikzpicture}}}

\NewDocumentCommand{\labBY}{m m m m}{\raisebox{-0.4\height}{\begin{tikzpicture}[line cap=round,line join=round,>=triangle 45,x=0.4cm,y=0.4cm]
			\draw [line width=.5pt] (0.,0)-- (0,1);
			\draw [line width=.5pt] (0.,1.)-- (-1.,2.);
			\draw [line width=.5pt] (0.,1.)-- (1.,2.);
			\filldraw (0,0);
			\draw[right] (0,0) node {\footnotesize{#1}};
			\draw[above] (0,1) node {\footnotesize{#2}};
			\draw[above] (-1,2) node {\footnotesize{#3}};
			\draw[above] (1,2) node {\footnotesize{#4}};
			\filldraw (0,0) circle (1pt);
			\filldraw (0,1) circle (1pt);
			\filldraw (1,2) circle (1pt);
			\filldraw (-1,2) circle (1pt);
\end{tikzpicture}}}

\newcommand{\decombright}[7]{\raisebox{-0.5\height}{\begin{tikzpicture}[line cap=round,line join=round,>=triangle 45,x=0.3cm,y=0.3cm]
			\draw (0,1)--(-1,2) node[left]{$#1$};
			\draw (0,1)--(6,7);
			\draw (2,3)--(1,4) node[left]{$#2$};
			\draw (3.,4.5) node[rotate=45]{$\cdots$};
			\draw (5,6)--(4,7) node[left]{$#3$};
			\draw (0,1) node[right]{\scriptsize{$#4$}};
			\draw (2,3) node[right]{\scriptsize{$#5$}};
			\draw (5,6) node[right]{\scriptsize{$#6$}};
			\draw (6,7) node[right]{\scriptsize{$#7$}};
			\filldraw (0,1) circle (0.15);
			\filldraw (2,3) circle (0.15);
			\filldraw (5,6) circle (0.15);
			\filldraw (6,7) circle (0.15);
\end{tikzpicture} }}

\newcommand{\decombleft}[7]{\raisebox{-0.5\height}{\begin{tikzpicture}[line cap=round,line join=round,>=triangle 45,x=0.3cm,y=0.3cm]
			\draw (0,1)--(1,2) node[right]{$#1$};
			\draw (0,1)--(-6,7);
			\draw (-2,3)--(-1,4) node[right]{$#2$};
			\draw (-3.,4.5) node[rotate=135]{$\cdots$};
			\draw (-5,6)--(-4,7) node[right]{$#3$};
			\draw (0,1) node[left]{\scriptsize{$#4$}};
			\draw (-2,3) node[left]{\scriptsize{$#5$}};
			\draw (-5,6) node[left]{\scriptsize{$#6$}};
			\draw (-6,7) node[left]{\scriptsize{$#7$}};
			\filldraw (0,1) circle (0.15);
			\filldraw (-2,3) circle (0.15);
			\filldraw (-5,6) circle (0.15);
			\filldraw (-6,7) circle (0.15);
\end{tikzpicture}}}

	\title{Regularised double shuffle relations for planar \AZVs{}}
	
	\author{Pierre Catoire${}^{1}$, Pierre~J.~Clavier${}^{2,3}$, Ku-Yu Fan${}^{4}$\\
~\\
\normalsize \it $^1$ \small{Université de Montpellier, IMAG, Institut Montpelliérain Alexander Grothendieck,
				Place Eugène Bataillon, Montpellier, 34070, France.} \\
\normalsize \it $^2$  Department of Mathematics, IRIMAS, Université de Haute Alsace.\\
\normalsize \it $^3$ IRMA, Université de Strasbourg.\\
\normalsize \it $^4$ Graduate School of Mathematics, Nagoya University, Furo-cho, Chikusa-ku, Nagoya, 464-8602, Japan.\\
~\\
\normalsize email: pierre.clavier@uha.fr, catoire\_research@proton.me, ku-yu.fan.d2@math.nagoya-u.ac.jp}

\date{}

\begin{document}	
	\maketitle
	
	\begin{abstract} 
We endow spaces of decorated planar rooted trees with new dendriform and tridendrifrom algebra structures and provide their combinatorial description. We then show that the planar counterparts of \AZVs{} are algebra morphisms for these shuffle and quasi-shuffle products of planar rooted trees. We also prove an arborified version of Hoffman's regularisation relation for \AZVs{}. We conjecture that those give every rational relation between \AZVs{} and show that this conjecture implies the regularised double shuffle conjecture for \MZVs{}.
\end{abstract}
{\bf Keywords:} Dendriform and tridendriform algebras, planar rooted trees, binarisation map, \AZVs{}, \MZVs{}.
	
	\tableofcontents
	
	\section*{Introduction}
	
	\addcontentsline{toc}{section}{Introduction}
	
	\subsection*{State of the art}
	
	\addcontentsline{toc}{subsection}{State of the art}
	
	\MZVs{} can be traced back to Euler~\cite{euler1776meditationes} and have since then been rediscovered under many names. However, the inception of their modern study should be attributed to Ecalle~\cite{Ecalle} (see~\cite{schneps2015ari} for an introduction in a more modern language). This sparked a systematic study of their various properties, with notable stepstones being laid by Hoffman~\cite{Ho92} and Zagier~\cite{Za94} among others. \MZVs{} and their generalisation have found applications to various areas of mathematical physics, in particular perturbative and integrable quantum field theory and string theory, see for example~\cite{todorov2014polylogarithms,Wi91,zagier2019genus}.
	
	The study of \MZVs{} remains today an active field of research and it is not the purpose of this article to make its survey. One should nonetheless mention the \emph{motivic} approach to \MZVs{} which uses methods of
	algebraic geometry~\cite{Andre_2009}. Some of the most important progress toward a full understanding of the structure of \MZVs{} have been made using the theory of motives, see for example~\cite{brown2012mixed}. In this theory, \MZVs{} are interpreted as particular \emph{motives}~\cite{Kont_2001}. Those motives can be evaluated thanks to the period morphism. The main challenge of the motives setup
	is to prove the \emph{period conjecture} stating that the period morphism is injective. 
        
	\medskip
	
	\MZVs{} have been generalised in many ways, but let us mention for completeness conical and elliptic zeta values~\cite{En13,GPZ13}, Witten's \MZVs{}~\cite{Wi91}, and finite zeta values~\cite{kaneko2019introduction}. The generalisation that interests us here is called \emph{Arborified Zeta Values}. They seem to have been introduced by Ecalle~\cite{Ecalle} and appeared in Yamamoto's work~\cite{Ya20} but their systematic study was started by Manchon~\cite{Manchon_16}. Notice that finite versions of \AZVs{} were introduced~\cite{On16}, and renormalised \AZVs{} have been investigated~\cite{CGPZ3}. 
	
	In Manchon's work~\cite{Manchon_16}, the author suggests two research directions:
	\begin{itemize}
	 \item to find a binarisation map (see next subsection) relating the simple and contracting arborifications which we call flattening maps in this paper (see definition~\ref{defi:flat_map} below);
	 \item to generalise properties of \MZVs{} to \AZVs{}.
	\end{itemize}
	The first question was recently solved by the third author~\cite{Fan25}. His construction will play a crucial role in this work and we give some details later on.
	
	Attempts to answer the second question have been made~\cite{clavier2020double,clavier2024generalisations} but not all relevant properties of \MZVs{} could be lifted to \AZVs{}. A new generalisation of \MZVs{} to planar rooted trees (Schroeder trees to be precise) was built using universal properties of these trees in the categories of dendriform and tridendriform algebras~\cite{catoire2025tridendriform}. These generalisations were then related to \AZVs{}, hence showing a deep relation between them and tridendriform and dendriform structures of planar rooted trees. Here we take the opposite point of view and build new tridendriform and dendriform products on planar rooted trees such that all relevant properties of \MZVs{} generalise to \AZVs{}; thus fully answering the second question of Manchon mentioned above~\cite{Manchon_16}.

	\subsection*{Elements of the theory of \MZVs{}}
	
	\addcontentsline{toc}{subsection}{Elements of the theory of \MZVs{}}	


\MZVs{} are real numbers defined, for $p\geq 1$ integers $n_1,\ldots,n_p$ with $n_i\in\N^*:=\Z_{\geq1}$ and $n_1\geq 2$, by the series
\[
\zeta(n_1,\ldots,n_p) \coloneqq \sum_{k_1>\cdots>k_p>0} \frac{1}{k_1^{n_1}\cdots k_p^{n_p}}.
\]
They admit a representation in terms of iterated integrals. For $k\in \N^*, (\epsilon_1,\cdots,\epsilon_k)\in \{x,y\}^k$ with $\epsilon_1=x$ and $\epsilon_k=y$ we set
	\[
	\zeta_\shuffle(\epsilon_1,\cdots,\epsilon_k)=\int_{1\geq t_1\geq \cdots \geq t_k\geq0}\omega_{\epsilon_1}(t_1)\cdots\omega_{\epsilon_k}(t_k)	
	\]
with $\omega_x(t)=dt/t$ and $\omega_y(t)=dt/(1-t)$. In the following, we will see these objects as maps. Let $\Wcv$ be the vector space generated by words written in the alphabet $\N^*$ whose first letter is not 1, and $\Wxycv$ the vector space generated by words written in the alphabet $\{x,y\}$ whose first letter is $x$ and last letter is $y$. Then we define $\zeta:\Wcv \longrightarrow\R$ and $\zeta_\shuffle:\Wxycv\longrightarrow\R$ with the formulas above, extended by linearity. Since by definition $\Wcv$ and $\Wxycv$ both contain the empty word $\emptyset$, we also set $\zeta(\emptyset)=\zeta_\shuffle(\emptyset)=1$.

The two representations of \MZVs{} are related through \emph{the binarisation map} mentioned above. This map $\fraks:\calW_{\N^*}\longrightarrow\calW_{\{x,y\}}$ is defined by $\fraks(\emptyset)=\emptyset$ and
		\begin{equation} \label{eq:bin_map_words}
		 \fraks(n_1\cdots n_p) \coloneqq (\underbrace{x\cdots x}_{n_1-1\text{ times}}y)\sqcup\ldots\sqcup(\underbrace{x\cdots x}_{n_p-1\text{ times}}y)
		\end{equation}
        extended by linearity, where $\sqcup$ is the usual concatenation product of words.
        Then 
        \begin{equation} \label{eq:Kontsevich}
         \fraks(\Wcv)\subseteq\Wxycv\quad\text{and}\quad\forall w\in\Wcv,~\zeta(w)=\zeta_\shuffle\circ\fraks(w).
        \end{equation}
        A proof of this relation between iterated series and iterated integrals representations of \MZVs{} was given in~\cite{Za94} quoting an unpublished observation by Kontsevich. This is why this relation is sometimes called Kontsevich's relation.
        
        \medskip

        The $\Q$-vector space spanned by \MZVs{} has
two complementary algebraic descriptions. On the one hand, the product of two series is controlled by the quasi-shuffle product. On the other hand, the iterated integral representation of \MZVs{} gives rise to the shuffle product. Let us now briefly recall the constructions of these products. We refer the readers to, for example,~\cite{Ho00,Waldschmidt} for details relevant to \MZVs{}.

Let $\Omega$ (resp. $(\Omega, +)$) be a set (resp. a commutative semigroup).  Let $\WO$ be the vector space spanned by words written in the alphabet $\Omega$. We introduce two products over $\WO$. The \emph{shuffle product} $\shuffle$ (resp. the 
  \emph{quasi-shuffle product} $\cshuffle$) is recursively defined by $\emptyset\shuffle w = w\shuffle\emptyset = w$ (resp. $\emptyset\cshuffle w = w\cshuffle\emptyset = w$) and for any $\omega,\omega' \in\Omega$, and any words $w$ and $w'$ written in the alphabet $\Omega$ 
  \begin{equation} \label{eq:shuffle} 
   \left((\omega)\sqcup w\right) \shuffle \left((\omega')\sqcup w'\right) = (\omega)\sqcup\left[w \shuffle \left((\omega')\sqcup w'\right) \right] + (\omega')\sqcup\left[\left((\omega)\sqcup w\right)\shuffle w'\right] 
  \end{equation}
  (resp. %
  \begin{equation} \label{eq:quasi-shuffle}
   \left((\omega) \sqcup w\right) \cshuffle \left((\omega') \sqcup w'\right) = (\omega) \sqcup \left[w \cshuffle \left((\omega') \sqcup w'\right) \right] + (\omega') \sqcup \left[\left((\omega) \sqcup w\right)\cshuffle w'\right]  + (\omega+\omega') \sqcup \left[w\cshuffle w'\right]\,),
  \end{equation}
  where $(\alpha)$ is a word containing only the letter $\alpha$.
  
  Now, take the semigroup $(\N^*,+)$ for iterated series and the set $\{x,y\}$ for iterated integrals. Then $\Wcv$ and $\Wxycv$ are subalgebras of $\W$ and $\Wxy$ for the products $\cshuffle$ and $\shuffle$ respectively. Furthermore, $\zeta$ and $\zeta_\shuffle$ are algebra morphisms for the products $\cshuffle$ and $\shuffle$ respectively:
if $u,v\in\Wcv$ and $u',v'\in\Wxycv$, then
	\begin{equation} \label{eq:shuffle_stuffle_zeta}
			\zeta(u\cshuffle v)=\zeta(u)\cdot \zeta(v),\qquad \zeta_{\shuffle}(u'\shuffle v')=\zeta_{\shuffle}(u')\cdot \zeta_{\shuffle}(v'). 
		\end{equation}
		These relations also stand for generalisations of \MZVs{}, see for example~\cite{racinet2002doubles}.
		 The consequences of relations~\eqref{eq:shuffle_stuffle_zeta} together with the iterated integral representations~\eqref{eq:Kontsevich} are called \emph{finite double shuffle relations}.
        
        Finally, for any $w\in\Wcv$, ${w\cshuffle (1)-\fraks^{-1}\left(\fraks(w)\shuffle(y)\right)\in\Wcv}$. Given $w\in\Wcv$, Hoffman's regularisation relation~\cite{Ho92,hoffman1997algebra} is 
        \begin{equation} \label{eq:Hoffman}
        	\quad w\cshuffle (1)-\fraks^{-1}\left(\fraks(w)\shuffle(y)\right) \in \Ker(\zeta).
        \end{equation}
        An important conjecture of the theory of \MZVs{} is
        \begin{conj} \label{conj:mzvs}
            Any rational relation among \MZVs{} can be derived from the finite double shuffle relations and Hoffman's regularisation relation.
        \end{conj}
        Here we use ``rational relations'' for ``polynomial relations with rational coefficients''. 
        For the relation between this conjecture and other conjectures about \MZVs{}, see~\cite{ihara2006derivation}.
        

        \subsection*{Content and main results}
        
        \addcontentsline{toc}{subsection}{Content and main result}

The paper is organised as follows. 
In section~\ref{sec:tridend}, we recall tridendriform and dendriform algebras (definitions~\ref{defi:tridend} and~\ref{defi:dend}). The relevant products on planar trees decorated by a set and a semigroup are introduced in definitions~\ref{defi:shuffle_trees} and~\ref{defi:quasishuffle_trees} respectively. The main results of the first section are propositions~\ref{prop:dend_struct} and~\ref{prop:tridend_struct} where we state that these products endow spaces of planar rooted trees with structures of dendriform and tridendriform algebras.

In section~\ref{sec:combs}, we recall the comb representations of planar rooted trees in definition~\ref{defi:combs}. We further recall the shuffle and quasi-shuffle maps (definition~\ref{def:quasi_shuffles}) and how they act on planar rooted trees (definition~\ref{def:quasiaction}). The main result of the second section is theorem~\ref{thm:comb} where a combinatorial (i.e. non inductive) description of the tridendriform and dendriform products is given. This construction is based on previous work by the first author \cite{Catoire_23} and also provides us with a description of the various products of the dendriform and tridendriform structures (corollary~\ref{cor:products}).

In section~\ref{sec:azvs} we start by introducing the planar \AZVs{} (definition~\ref{def:azvs}) and recall some results of the second author~\cite{clavier2020double} regarding these objects (theorem~\ref{thm:AZV_flaten}). The main result of this section, and one of the main results of this paper, is theorem~\ref{thm:azv_shuffle_stuffle}, where we show \AZVs{} are algebra morphisms for our new shuffle and quasi-shuffle products of rooted trees. 
It follows from the fact that flattening maps (definition~\ref{defi:flat_map}) are morphisms.


Section~\ref{sec:Hoffman} starts with a reminder of the binarisation map obtained by the third author~\cite{Fan25}. We first recall the minimal incomparable pair of a planar rooted tree (definition~\ref{def:min_incomp_pairs}) and its process tree (definition~\ref{defi:process_tree}). This allows us to define the relevant binarisation map in definition~\ref{defi:error_tree} which uses a previously built map (definition~\ref{defi:old_bin_map}). We then recall that this binarisation map solves the first question of Manchon~\cite{Manchon_16} (theorem~\ref{thm F.}) and notice in corollary~\ref{coro:arbo_Konts} that it implies that it relates our two versions of \AZVs{} (series and integrals). Then, in proposition~\ref{Prop:phi_bij} we prove that the $\phi$ map of the third author's work~\cite{Fan25} (definition~\ref{defi:error_tree}) is bijective. It is one of the main ingredients of the binarisation map.
The  crucial and main result of section~\ref{sec:Hoffman} is theorem~\ref{thm:arbo_Hoffman} about arborified versions of Hoffman's regularisation relation~\eqref{eq:Hoffman}. 
Note this result together with theorem~\ref{thm:azv_shuffle_stuffle} 
 and corollary~\ref{coro:arbo_Konts} which shows that the binarisation map relates the two versions of \AZVs{}, fully answer the second question of~\cite{Manchon_16} mentioned above.

Finally in section~\ref{sec:relating_zetas}, we state an arborified analogous of conjecture~\ref{conj:mzvs}~: conjecture~\ref{conj:azvs}. Notice that this conjecture requires some other obvious relations between \AZVs{}, the planarity relations, which are given in proposition~\ref{prop:planarity}. The third important result of this paper is theorem~\ref{thm:relating_conj}, where we show that our conjecture on \AZVs{} implies the standard conjecture on \MZVs{}. This, together with the fact that planar rooted trees are rather simple combinatorial objects with many powerful properties, suggests that studying the arborified case may be relevant to tackle conjecture~\ref{conj:mzvs}.

	\section{(Tri)dendriform structures for planar rooted trees} \label{sec:tridend}
	
	
	\subsection{Dendriform and tridendriform algebras}
	
	Let us start by defining the central structures of this paper, namely tridendriform and dendriform algebras. These definitions are taken from~\cite{burgunder2010tridendriform,foissy2007bidendriform,loday2002trialgebras,ronco2002eulerian}. 
	\begin{defi}\label{defi:tridend}
		Let $A$ be a vector space  endowed with three bilinear operations $\prec,\succ,\cdot$.  We say that $(A,\prec,\succ,\cdot)$ is a \emph{tridendriform algebra} if for all $(x,y,z)\in A^3$:
		\begin{align}
			(x\prec y)\prec z&=x\prec(y*z), \label{eq:tri1}\\
			(x\succ y)\prec z&=x\succ(y\prec z), \label{eq:tri2} \\
			(x* y)\succ z&=x\succ(y\succ z),  \label{eq:tri3} \\
			(x\succ y)\cdot z&=x\succ(y\cdot z),	 \label{eq:tri4} \\
			(x\prec y)\cdot z&=x\cdot(y\succ z), \label{eq:tri5} \\
			(x\cdot y)\prec z&=x\cdot(y\prec z), \label{eq:tri6} \\
			(x\cdot y)\cdot z&=x\cdot (y\cdot z), \label{eq:tri7}
		\end{align}
		where for all $x,y\in A$, we set $x*y\coloneqq x\prec y +x\succ y + x\cdot y$. We respectively call $\prec, \succ, \cdot$ the \emph{left, right and middle products}. 
	\end{defi}
	A dendriform algebra can be seen as a tridendriform algebra with a vanishing middle product.
	\begin{defi}\label{defi:dend} 
		We say that a tridendriform algebra $(A,\prec,\succ,\cdot)$ is a \emph{dendriform algebra} if $\cdot=0$. Hence, writing $\star\coloneqq\prec+\succ$, only three relations remain:
		\begin{align}
			(x\prec y)\prec z&=x\prec(y\star z), \label{eq:dend1}\\
			(x\succ y)\prec z&=x\succ(y\prec z), \label{eq:dend2} \\
			(x\star y)\succ z&=x\succ(y\succ z).  \label{eq:dend3} 
		\end{align}
	\end{defi}
	
	The notion of tridendriform comes with its notion of morphisms:
		\begin{defi}
			A \emph{tridendriform morphism} of tridendriform algebras between ${({A},\prec,\succ, \cdot)}$ and ${({B},\prec,\succ,\cdot)}$ is a linear map $f:A\rightarrow B$ such that for any $x,y\in A$:
			\begin{align*}
				f(x\prec y)=f(x)\prec f(y), && f(x\succ y)=f(x)\succ f(y), && f(x\cdot y)=f(x)\cdot f(y).
			\end{align*}
			If $\cdot=0$ we call such a map a \emph{dendriform morphism}.
		\end{defi}
	Note that (tri)dendriform algebras are endowed with a classical associative algebra structure. 
	\begin{Prop}
	 Let $(A,\prec,\succ,\cdot)$ (resp $(A,\prec,\succ)$) be a tridendriform algebra (resp. a dendriform algebra). Then the product $*$ on $A$ defined by $x*y\coloneqq x\prec y + x\succ y + x\cdot y$ (resp. $x\star y\coloneqq x\prec~y+x\succ~y$) is associative.
	\end{Prop}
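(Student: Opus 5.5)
We prove associativity of $*$ by expanding both $(x*y)*z$ and $x*(y*z)$ by bilinearity into nine terms each, and matching them using the seven axioms~\eqref{eq:tri1}--\eqref{eq:tri7}. The dendriform case then follows at once: set $\cdot=0$, so that $*=\star$ and the axioms reduce to~\eqref{eq:dend1}--\eqref{eq:dend3}.

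The key observation is that we should not expand $x*(y*z)$ blindly. Instead, we group the left-hand side according to the outer product. By bilinearity,
\[
(x*y)*z=(x*y)\prec z+(x*y)\succ z+(x*y)\cdot z .
\]
We treat each summand separately.

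\emph{The $\prec$ summand.} Expanding $x*y$ gives
\[
(x*y)\prec z=(x\prec y)\prec z+(x\succ y)\prec z+(x\cdot y)\prec z .
\]
By~\eqref{eq:tri1}, \eqref{eq:tri2} and~\eqref{eq:tri6}, this equals
\[
x\prec(y*z)+x\succ(y\prec z)+x\cdot(y\prec z).
\]

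\emph{The $\succ$ summand.} Axiom~\eqref{eq:tri3} directly gives $(x*y)\succ z=x\succ(y\succ z)$.

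\emph{The $\cdot$ summand.} Expanding $x*y$ gives
\[
(x*y)\cdot z=(x\prec y)\cdot z+(x\succ y)\cdot z+(x\cdot y)\cdot z .
\]
By~\eqref{eq:tri5}, \eqref{eq:tri4} and~\eqref{eq:tri7}, this equals
\[
x\cdot(y\succ z)+x\succ(y\cdot z)+x\cdot(y\cdot z).
\]

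Summing the three summands and regrouping by the outer product on $x$, we obtain
\[
(x*y)*z=x\prec(y*z)+x\succ\bigl(y\prec z+y\succ z+y\cdot z\bigr)+x\cdot\bigl(y\prec z+y\succ z+y\cdot z\bigr).
\]
The two bracketed sums are both $y*z$, so the right-hand side is
\[
x\prec(y*z)+x\succ(y*z)+x\cdot(y*z)=x*(y*z).
\]

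There is no real obstacle here; the only care needed is bookkeeping. Each of the seven axioms must be used exactly once, and no term may be double-counted. In particular, the term $x\prec(y*z)$ already absorbs all three of $x\prec(y\prec z)$, $x\prec(y\succ z)$ and $x\prec(y\cdot z)$, which is why the $\prec$ summand needs only three axioms.
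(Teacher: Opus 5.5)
Your proof is correct: axioms \eqref{eq:tri1}, \eqref{eq:tri2} and \eqref{eq:tri6} rewrite the $\prec$ summand, \eqref{eq:tri3} rewrites the $\succ$ summand, and \eqref{eq:tri5}, \eqref{eq:tri4} and \eqref{eq:tri7} rewrite the $\cdot$ summand. The resulting seven terms regroup exactly into $x\prec(y*z)+x\succ(y*z)+x\cdot(y*z)$. The paper gives no proof of this proposition and treats it as a classical fact from the cited tridendriform literature; your direct verification is the standard argument, and the dendriform case follows as you say, since setting $\cdot=0$ makes \eqref{eq:tri4}--\eqref{eq:tri7} trivial and turns $*$ into $\star$.
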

	We recall classical and important examples of (tri)dendriform structures on words, given by the shuffle and quasi-shuffle of words recalled above.
	\begin{Eg}[shuffle product for words] \label{def:shuffle_words}
		We can split the shuffle product of words introduced before (see equation~\eqref{eq:shuffle}) into two smaller products $\prec$ and $\succ$ defined inductively by:
		\begin{align*}
			u\prec v = (u_1)\sqcup(u_2\dots u_n \shuffle v), &&
			u\succ v=(v_1)\sqcup(u\shuffle v_2\dots v_k).
		\end{align*}
	\end{Eg}
	For the tridendriform structure, the set $\Omega$ needs to have a semigroup structure. We denote by $+$ its product as we are mainly interested in $\Omega=(\N^*,+)$.
	\begin{Eg}[quasi-shuffle product for words]  \label{def:quasi_shuffle_words} 
		We split the quasi-shuffle product of words  defined in equation~\eqref{eq:quasi-shuffle} into three smaller products $\prec,\succ$ and $\cdot$ defined inductively by:
		\begin{align*}
			u\prec v = &(u_1)\sqcup(u_2\dots u_n \cshuffle v),\quad
			u\succ v=(v_1)\sqcup(u\cshuffle v_2\dots v_k), \\
			 &u \cdot v=(u_1+v_1)\sqcup(u_2\dots u_n \cshuffle v_2\dots v_k).
		\end{align*}
	\end{Eg}
	Shuffle and quasi-shuffle, with the decompositions presented above, are classical examples of dendriform and tridendriform algebras~\cite{Ebrahimi_Fard_2018}.
	\begin{Prop}
	 Let $\Omega$ be a set (resp. $(\Omega,+)$ a semigroup). Then $(\calW_\Omega,\prec,\succ)$ (resp. $\left(\calW_{\Omega},\prec,\succ,\cdot\right)$) is a dendriform algebra (resp. a tridendriform algebra).
	\end{Prop}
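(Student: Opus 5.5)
We prove the tridendriform statement and obtain the dendriform one by specialisation. The products on $\calW_\Omega$ are only defined on nonempty words; the empty word $\emptyset$ is the unit of $\shuffle$ and $\cshuffle$ and is excluded from the (tri)dendriform axioms, or handled via the usual augmented conventions. Write $u=(u_1)\sqcup u'$, $v=(v_1)\sqcup v'$, $w=(w_1)\sqcup w'$ for nonempty words, and recall that $u*v = u\prec v+u\succ v+u\cdot v = u\cshuffle v$ by definition~\eqref{eq:quasi-shuffle}.

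\textbf{Step 1: reduce each axiom to first letters.} Each of the seven relations~\eqref{eq:tri1}--\eqref{eq:tri7} becomes the statement that two words agree once the first letter is peeled off. For instance, for~\eqref{eq:tri1} we have
\[
(u\prec v)\prec w=(u_1)\sqcup\big((u'\cshuffle v)\cshuffle w\big),
\qquad
u\prec(v* w)=(u_1)\sqcup\big(u'\cshuffle(v\cshuffle w)\big).
\]
The remaining relations work the same way; note that $(v_1)\sqcup X \prec Y=(v_1)\sqcup(X\cshuffle Y)$ for any linear combination $X$ of words.
\begin{itemize}
\item For~\eqref{eq:tri2}, both sides equal $(v_1)\sqcup\big((u\cshuffle v')\cshuffle w\big)$ and $(v_1)\sqcup\big(u\cshuffle(v'\cshuffle w)\big)$ respectively.
\item For~\eqref{eq:tri3}, we get $(w_1)\sqcup\big((u\cshuffle v)\cshuffle w'\big)$ against $(w_1)\sqcup\big(u\cshuffle(v\cshuffle w')\big)$.
\item For~\eqref{eq:tri4}--\eqref{eq:tri6}, the leading letters are respectively $v_1+w_1$, $u_1+w_1$ and $u_1+v_1$.
\item For~\eqref{eq:tri7}, the leading letter is $(u_1+v_1)+w_1$ on one side and $u_1+(v_1+w_1)$ on the other. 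Associativity of the semigroup $(\Omega,+)$ is used exactly here.
\end{itemize}
The tails are always a $\cshuffle$-product of three (possibly empty) words, bracketed in the two possible ways.

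\textbf{Step 2: associativity of $\cshuffle$.} Every relation therefore reduces to associativity of $\cshuffle$ on $\calW_\Omega$, including the cases where some tail is empty; these are trivial since $\emptyset$ is a unit. We prove associativity by induction on the total length of the three words. Expand $(u\cshuffle v)\cshuffle w$ using~\eqref{eq:quasi-shuffle} into seven terms, according to which nonempty subset of $\{u_1,v_1,w_1\}$ forms the first letter (summed via $+$ when there are several). The tail in each term is an iterated $\cshuffle$ of shorter words. Do the same for $u\cshuffle(v\cshuffle w)$, match the seven terms, and apply the induction hypothesis together with commutativity and associativity of $+$ to the leading letters.

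A cleaner alternative is to note that the seven axioms themselves, restricted to shorter words, are exactly the induction hypothesis. Summing~\eqref{eq:tri1}--\eqref{eq:tri7} gives associativity of $*$, so we induct on total length simultaneously for the associativity statement and for the axioms. This bookkeeping is the only genuine obstacle. The case analysis on first letters must be exhaustive, and empty tails must be treated consistently: for example, $u\prec v$ when $u$ is a single letter equals $(u_1)\sqcup v$.

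\textbf{Step 3: the dendriform case.} For the shuffle statement, take $\Omega$ to be a set and drop every term involving $\cdot$. Equivalently, the recursion~\eqref{eq:shuffle} is~\eqref{eq:quasi-shuffle} without the contraction term, so the same first-letter computation proves~\eqref{eq:dend1}--\eqref{eq:dend3}. The only input is associativity of $\shuffle$, proved by the identical (simpler) induction with three terms instead of seven.
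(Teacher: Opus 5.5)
Your proof is correct. It takes a different route from the paper only in the sense that the paper gives no argument here at all: it calls these ``classical examples'' and cites the literature.

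Your Step~1 checks out axiom by axiom. In \eqref{eq:tri1}--\eqref{eq:tri7} the common leading letters are $u_1$, $v_1$, $w_1$, $v_1+w_1$, $u_1+w_1$, $u_1+v_1$ and $(u_1+v_1)+w_1=u_1+(v_1+w_1)$. Each is followed by the two bracketings of a $\cshuffle$-product of $(u',v,w)$, $(u,v',w)$, $(u,v,w')$, $(u,v',w')$, $(u',v,w')$, $(u',v',w)$, $(u',v',w')$ respectively. So everything reduces to associativity of $\cshuffle$ (resp. $\shuffle$) with $\emptyset$ as unit, and your seven-term (resp. three-term) induction proves that.

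Your ``cleaner alternative'' is exactly the scheme the paper uses for the tree analogue, Proposition~\ref{prop:dend_struct}: assume the axioms below total size $N$, sum them to get associativity there, and feed that into the leading-letter (there: root) computation at size $N$. On ladder trees, the recursions of Definitions~\ref{defi:shuffle_trees} and~\ref{defi:quasishuffle_trees} are literally those of the word products. Your computation is therefore the paper's tree computation with all side branches empty. What your version adds is a self-contained verification. It also carries the correct caveat, matching Remark~\ref{Rq:not_extension_prec_succ}, that the structure lives on the span of nonempty words rather than on all of $\calW_\Omega$.

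One small correction: you invoke commutativity of $+$ in Step~2, but it is never needed. In every merged leading letter, the summands appear in the same order ($u$-letter, $v$-letter, $w$-letter) on both sides. This is because the recursion \eqref{eq:quasi-shuffle} always puts the left factor's letter first. For instance, the term with leading letter $u_1+w_1$ arises as $u_1+w_1$ from both $(u\cshuffle v)\cshuffle w$ and $u\cshuffle(v\cshuffle w)$. So associativity of $(\Omega,+)$ alone suffices, which is what the statement requires, since it assumes only a semigroup.
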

	
	\subsection{Dendriform structure for planar rooted trees} \label{subsec:dend}
	
	Recall that a \emph{planar tree} is an oriented connected planar graph, such that the orientation of the edges defines a partial order on the vertices, with a unique minimal element (the \emph{root}) for this partial order, and such that there exists a unique path from the root to any vertex of the tree. Vertices that are maximal for the partial order are called \emph{leaves}. A \emph{planar forest} is the planar concatenation of planar trees. For any tree $T$, we write $V(T)$ the set of vertices of $T$, and we sometimes use the short-hand notation $|T|$ for $|V(T)|$.
	
	Given a set $\Omega$, a planar $\Omega$-decorated tree is a planar tree together with a map ${d_T:V(T)\longrightarrow\Omega}$. We write $\PTO$ the set of non-empty planar rooted trees decorated (on their vertices) by $\Omega$, and $\calPT_\Omega$ the vector space spanned by $\PTO$. 
	Finally, we will simply say that $T$ is a tree to mean that $T$ is a planar decorated tree, in particular omitting its decoration map.
    
	\begin{Eg}
		Consider $|\Omega|=1$ (hence we do not need to write decorations), we obtain all of the following trees $t$ with three internal vertices (black nodes) and at three leaves (white nodes): 
		\begin{center}
		
			  \begin{tikzpicture}[x=1em,y=1em]
				\draw (0,0) -- (0,3);
				\draw (0,0) -- (-1,1);
				\draw (0,0) -- (1,1);
				\draw[color=black, fill=white](1,1) circle(0.15);
				\draw[color=black, fill=white](-1,1) circle(0.15);
				\draw[color=black, fill=white](0,3) circle(0.15);
				\fill(0,0) circle(0.15); 
				\fill (0,1) circle(0.15);
				\fill (0,2) circle(0.15);
			\end{tikzpicture}
            \hspace{1em}
			\begin{tikzpicture}[x=1em,y=1em]
				\draw (0,0) -- (0,1);
				\draw (-1,1) -- (-1,3);
				\draw (0,0) -- (-1,1);
				\draw (0,0) -- (1,1);
				\draw[color=black, fill=white](1,1) circle(0.15);
				\draw[color=black, fill=white](0,1) circle(0.15);
				\draw[color=black, fill=white](-1,3) circle(0.15);
				\fill(0,0) circle(0.15); 
				\fill (-1,1) circle(0.15);
				\fill (-1,2) circle(0.15);
			\end{tikzpicture}
			\hspace{1em}
			\begin{tikzpicture}[x=1em,y=1em]
				\draw (0,0) -- (0,1);
				\draw (1,1) -- (1,3);
				\draw (0,0) -- (-1,1);
				\draw (0,0) -- (1,1);
				\draw[color=black, fill=white](0,1) circle(0.15);
				\draw[color=black, fill=white](-1,1) circle(0.15);
				\draw[color=black, fill=white](1,3) circle(0.15);
				\fill(0,0) circle(0.15); 
				\fill (1,1) circle(0.15);
				\fill (1,2) circle(0.15);
			\end{tikzpicture}
			\hspace{1em}
			\begin{tikzpicture}[x=1em,y=1em]
				\draw (0,0) -- (0,3);
				\draw (0,2) -- (1,3);
				\draw (0,2) -- (-1,3);
				\draw[color=black, fill=white](0,3) circle(0.15);
				\draw[color=black, fill=white](1,3) circle(0.15);
				\draw[color=black, fill=white](-1,3) circle(0.15);
				\fill(0,0) circle(0.15); 
				\fill (0,1) circle(0.15);
				\fill (0,2) circle(0.15);
			\end{tikzpicture}
			\hspace{1em}
			\begin{tikzpicture}[x=1em,y=1em]
				\draw(0,2) -- (0.5,1);
				\draw(1,2) -- (0.5,1);
				\draw(2.5,2) -- (2.5,1);
				\draw(0.5,1) -- (1.5,0);
				\draw(2.5,1) -- (1.5,0);
				\draw[color=black, fill=white](0,2) circle(0.15);
				\draw[color=black, fill=white](1,2) circle(0.15);
				\draw[color=black, fill=white](2.5,2) circle(0.15);
				\fill(1.5,0) circle(0.15); 
				\fill (0.5,1) circle(0.15);
				\fill (2.5,1) circle(0.15);
			\end{tikzpicture}
			\hspace{1em}
			\begin{tikzpicture}[x=1em,y=1em]
				\draw(0.5,2) -- (0.5,1);
				\draw(2,2) -- (2.5,1);
				\draw(3,2) -- (2.5,1);
				\draw(0.5,1) -- (1.5,0);
				\draw(2.5,1) -- (1.5,0);
				\draw[color=black, fill=white](0.5,2) circle(0.15);
				\draw[color=black, fill=white](2,2) circle(0.15);
				\draw[color=black, fill=white](3,2) circle(0.15);
				\fill(1.5,0) circle(0.15); 
				\fill (0.5,1) circle(0.15);
				\fill (2.5,1) circle(0.15);
			\end{tikzpicture}
			\hspace{1em}
			\begin{tikzpicture}[x=1em,y=1em]
				\draw(-0.5,2) -- (-0.5,1);
				\draw(-0.5,1) -- (0.25,0);
				\draw(1,1) -- (0.25,0);
				\draw(0.25,0) -- (-0.75,-1);
				\draw(-1.75,0) -- (-0.75,-1);
				\draw[color=black, fill=white](-0.5,2) circle(0.15);
				\draw[color=black, fill=white](1,1) circle(0.15);
				\draw[color=black, fill=white](-1.75,0) circle(0.15);
				\fill(-0.75,-1) circle(0.15); 
				\fill (-0.5,1) circle(0.15);
				\fill (0.25,0) circle (0.15);
			\end{tikzpicture}
			\hspace{1em}
			\begin{tikzpicture}[x=1em,y=1em]
				\draw(0.5,2) -- (0.5,1);
				\draw(-1,1) -- (-0.25,0);
				\draw(0.5,1) -- (-0.25,0);
				\draw(-0.25,0) -- (0.75,-1);
				\draw(1.75,0) -- (0.75,-1);
				\draw[color=black, fill=white](0.5,2) circle(0.15);
				\draw[color=black, fill=white](-1,1) circle(0.15);
				\draw[color=black, fill=white](1.75,0) circle(0.15);
				\fill(0.75,-1) circle(0.15); 
				\fill (-0.25,0) circle(0.15);
				\fill (0.5,1) circle (0.15);
			\end{tikzpicture}
			\hspace{1em}
			\begin{tikzpicture}[x=1em,y=1em]
			\draw(-0.5,2) -- (-0.5,1);
			\draw(1,1) -- (0.25,0);
			\draw(-0.5,1) -- (0.25,0);
			\draw(0.25,0) -- (1.125, -1);
			\draw(2,0) -- (1.125, -1);
			\draw[color=black, fill=white](-0.5,2) circle(0.15);
			\draw[color=black, fill=white](1,1) circle(0.15);
			\draw[color=black, fill=white](2,0) circle(0.15);
			\fill(1.125,-1) circle(0.15); 
			\fill (0.25,0) circle(0.15);
			\fill (-0.5,1) circle(0.15);
			\end{tikzpicture}
			\hspace{1em}
			\begin{tikzpicture}[x=1em,y=1em]
				\draw(0.5,2) -- (0.5,1);
				\draw(-1,1) -- (-0.25,0);
				\draw(0.5,1) -- (-0.25,0);
				\draw(-0.25,0) -- (-1.125, -1);
				\draw(-2,0) -- (-1.125, -1);
				\draw[color=black, fill=white](0.5,2) circle(0.15);
				\draw[color=black, fill=white](-1,1) circle(0.15);
				\draw[color=black, fill=white](-2,0) circle(0.15);
				\fill(-1.125,-1) circle(0.15); 
				\fill (-0.25,0) circle(0.15);
				\fill (0.5,1) circle(0.15);
			\end{tikzpicture},
		\end{center}
        \begin{center}
			\begin{tikzpicture}[x=1em,y=1em]
				\draw (0,0) -- (0,1);
				\draw (0,1) -- (1,2);
				\draw (0,1) -- (-1,2);
				\draw (1,2) -- (0.5,3);
				\draw (1,2) -- (1.5,3);
				\draw[color=black, fill=white](0.5,3) circle(0.15);
				\draw[color=black, fill=white](1.5,3) circle(0.15);
				\draw[color=black, fill=white](-1,2) circle(0.15);
				\fill(0,0) circle(0.15);
				\fill (0,1) circle(0.15);
				\fill (1,2) circle(0.15);
			\end{tikzpicture}
			\hspace{1em}
			\begin{tikzpicture}[x=1em,y=1em]
				\draw (0,0) -- (0,1);
				\draw (0,1) -- (-1,2);
				\draw (0,1) -- (1,2);
				\draw (-1,2) -- (-0.5,3);
				\draw (-1,2) -- (-1.5,3);
				\draw[color=black, fill=white](-0.5,3) circle(0.15);
				\draw[color=black, fill=white](-1.5,3) circle(0.15);
				\draw[color=black, fill=white](1,2) circle(0.15);
				\fill(0,0) circle(0.15);
				\fill (0,1) circle(0.15);
				\fill (-1,2) circle(0.15);
			\end{tikzpicture}
			\hspace{1em}
			\begin{tikzpicture}[x=1em,y=1em]
				\draw (0,0) -- (1,1);
				\draw (0,0) -- (-1,1);
				\draw (-1,1) -- (-1,2);
				\draw (-1,2) -- (-0.5,3);
				\draw (-1,2) -- (-1.5,3);
				\draw[color=black, fill=white](-0.5,3) circle(0.15);
				\draw[color=black, fill=white](-1.5,3) circle(0.15);
				\draw[color=black, fill=white](1,1) circle(0.15);
				\fill(0,0) circle(0.15);
				\fill (-1,2) circle(0.15);
				\fill (-1,1) circle(0.15);
			\end{tikzpicture}
			\hspace{1em}
			\begin{tikzpicture}[x=1em,y=1em]
				\draw (0,0) -- (-1,1);
				\draw (0,0) -- (1,1);
				\draw (1,1) -- (1,2);
				\draw (1,2) -- (0.5,3);
				\draw (1,2) -- (1.5,3);
				\draw[color=black, fill=white](0.5,3) circle(0.15);
				\draw[color=black, fill=white](1.5,3) circle(0.15);
				\draw[color=black, fill=white](-1,1) circle(0.15);
				\fill(0,0) circle(0.15);
				\fill (1,2) circle(0.15);
				\fill (1,1) circle(0.15);
			\end{tikzpicture}
			\hspace{1em}
			\begin{tikzpicture}[x=1em,y=1em]
				\draw (0,0) -- (-1,1);
				\draw (0,0) -- (1,1);
				\draw (0,0) -- (0,1);
				\draw (1,1) -- (1,2);
				\draw (-1,1) -- (-1,2);
				\draw[color=black, fill=white](0,1) circle(0.15);
				\draw[color=black, fill=white](1,2) circle(0.15);
				\draw[color=black, fill=white](-1,2) circle(0.15);
				\fill(0,0) circle(0.15);
				\fill (-1,1) circle(0.15);
				\fill (1,1) circle(0.15);
			\end{tikzpicture}
			\hspace{1em}
			\begin{tikzpicture}[x=1em,y=1em]
				\draw (0,0) -- (-1,1);
				\draw (0,0) -- (1,1);
				\draw (0,0) -- (0,1);
				\draw (0,1) -- (0,2);
				\draw (-1,1) -- (-1,2);
				\draw[color=black, fill=white](1,1) circle(0.15);
				\draw[color=black, fill=white](0,2) circle(0.15);
				\draw[color=black, fill=white](-1,2) circle(0.15);
				\fill(0,0) circle(0.15);
				\fill (-1,1) circle(0.15);
				\fill (0,1) circle(0.15);
			\end{tikzpicture}
			\hspace{1em}
			\begin{tikzpicture}[x=1em,y=1em]
				\draw (0,0) -- (1,1);
				\draw (0,0) -- (-1,1);
				\draw (0,0) -- (0,1);
				\draw (0,1) -- (0,2);
				\draw (1,1) -- (1,2);
				\draw[color=black, fill=white](-1,1) circle(0.15);
				\draw[color=black, fill=white](0,2) circle(0.15);
				\draw[color=black, fill=white](1,2) circle(0.15);
				\fill(0,0) circle(0.15);
				\fill (1,1) circle(0.15);
				\fill (0,1) circle(0.15);
			\end{tikzpicture}
			\hspace{1em}
			\begin{tikzpicture}[x=1em,y=1em]
				\draw (0,0) -- (0,2);
				\draw (0,2) -- (0,3);
				\draw (0,1) -- (1,2);
				\draw (0,1) -- (-1,2);
				\draw[color=black, fill=white](0,3) circle(0.15);
				\draw[color=black, fill=white](1,2) circle(0.15);
				\draw[color=black, fill=white](-1,2) circle(0.15);
				\fill(0,0) circle(0.15); 
				\fill (0,1) circle(0.15);
				\fill (0,2) circle(0.15);
			\end{tikzpicture}
			\hspace{1em}
			\begin{tikzpicture}[x=1em,y=1em]
				\draw (0,0) -- (0,2);
				\draw (1,2) -- (1,3);
				\draw (0,1) -- (1,2);
				\draw (0,1) -- (-1,2);
				\draw[color=black, fill=white](0,2) circle(0.15);
				\draw[color=black, fill=white](1,3) circle(0.15);
				\draw[color=black, fill=white](-1,2) circle(0.15);
				\fill(0,0) circle(0.15); 
				\fill (0,1) circle(0.15);
				\fill (1,2) circle(0.15);
			\end{tikzpicture}
			\hspace{1em}
			\begin{tikzpicture}[x=1em,y=1em]
				\draw (0,0) -- (0,2);
				\draw (-1,2) -- (-1,3);
				\draw (0,1) -- (1,2);
				\draw (0,1) -- (-1,2);
				\draw[color=black, fill=white](-1,3) circle(0.15);
				\draw[color=black, fill=white](1,2) circle(0.15);
				\draw[color=black, fill=white](0,2) circle(0.15);
				\fill(0,0) circle(0.15); 
				\fill (0,1) circle(0.15);
				\fill (-1,2) circle(0.15);
			\end{tikzpicture}.
		\end{center}
	\end{Eg}
    Furthermore, given $T_1,\cdots,T_k, k$ elements of $\PT_\Omega$ and $x\in\Omega$, we write $B_+^x(T_1\cdots T_k)$ the tree obtained by grafting $T_1,\cdots,T_k$ in this order on a new vertex which is decorated by $x$. We extend $B_+^x$ by linearity to be defined on elements of $\calPTO$.
	
	
	\begin{defi} \label{defi:shuffle_trees}
	 Let $\Omega$ be a set. Let us define for any pair of trees $(S,T)$ in $\PT_\Omega\times \PT_\Omega$ the products $S<T$ and $S>T$ (and also $S\shuffle^T T:=S<T+S>T$) inductively on $|S|+|T|$. 
    \begin{enumerate}
	   \item $S<T$. If $S$ has only one vertex, we can assume  $S = \tdun{x}$. Then we set
	   \begin{equation} \label{eq:quasi_ini_dend}
	    \tdun{x}<T \coloneqq B_+^x(T).
	   \end{equation}
       If $S$ does not have only one vertex, we can assume $S=B_+^x(S_1\cdots S_k)$. Then we set
       $$S<T \coloneqq B_+^x(S_1\cdots(S_k\shuffle^T T)).$$
	   \item $S>T$. If $T$ has only one vertex, we can assume $T = \tdun{y}$. Then we set
	   \begin{equation} \label{eq:quasi_ini_dend_2}
	    S>\tdun{y} \coloneqq B_+^y(S).
	   \end{equation}
       If $T$ does not have only one vertex, we can assume $T=B_+^y(T_1\cdots T_\ell)$. Then we set
       $$S>T := B_+^y((S\shuffle^T T_1)\cdots T_\ell).$$
	  \end{enumerate}
	 We then extend these products by bilinearity to products on $\calPT_\Omega$.
	\end{defi}
	
	\begin{Eg}
	For instance, we give some products of trees decorated with elements of $\{x,y\}$:
	\begin{align*}
		\labpoint{x} \shuffle^T \labY{y}{y}{x} &= \labBY{x}{y}{y}{x} + \labYl{y}{x}{y}{x} + \labYl{y}{y}{x}{x}, \\
		\labY{y}{x}{y} \shuffle^T \labpoint{x} &= \labBY{x}{y}{x}{y} + \labYr{y}{x}{x}{y} + \labYr{y}{x}{y}{x}, \\
		\labY{y}{x}{y} \shuffle^T \labY{y}{y}{x} &= \labY{y}{x}{y} < \labY{y}{y}{x} + \labY{y}{x}{y} > \labY{y}{y}{x} \\
		&= B_+^{y} \left(\labpoint{x} \left(\labpoint{y} \shuffle^T \labY{y}{y}{x}\right)\right) + B_+^{y} \left( \left(\labY{y}{x}{y} \shuffle^T \labpoint{y}\right)\labpoint{x}\right) \\
		&=
		2~\raisebox{-0.4\height}{\begin{tikzpicture}[line cap=round,line join=round,>=triangle 45,x=0.4cm,y=0.4cm]
				\draw [line width=.5pt] (0.,0)-- (0,3);
				\draw [line width=.5pt] (0.,0)-- (-1.,1.);
				\draw [line width=.5pt] (0.,1)-- (1.,2.);
				\draw[right] (0,0) node {\footnotesize{y}};
				\draw[right] (-1,1) node {\footnotesize{x}};
				\draw[right] (0,1) node {\footnotesize{y}};
				\draw[right] (0,2) node {\footnotesize{y}};
				\draw[right] (0,3) node {\footnotesize{y}};
				\draw[above] (1,2) node {\footnotesize{x}};
				\filldraw (0,0) circle (1pt);
				\filldraw (0,1) circle (1pt);
				\filldraw (-1,1) circle (1pt);
				\filldraw (0,3) circle (1pt);
				\filldraw (0,2) circle (1pt);
				\filldraw (1,2) circle (1pt);
		\end{tikzpicture}} +
	\raisebox{-0.4\height}{\begin{tikzpicture}[line cap=round,line join=round,>=triangle 45,x=0.4cm,y=0.4cm]
			\draw [line width=.5pt] (0.,0)-- (0,3);
			\draw [line width=.5pt] (0.,0)-- (-1.,1.);
			\draw [line width=.5pt] (0.,2)-- (1.,3.);
			\draw[right] (0,0) node {\footnotesize{y}};
			\draw[right] (-1,1) node {\footnotesize{x}};
			\draw[right] (0,1) node {\footnotesize{y}};
			\draw[right] (0,2) node {\footnotesize{y}};
			\draw[right] (0,3) node {\footnotesize{y}};
			\draw[above] (1,3) node {\footnotesize{x}};
			\filldraw (0,0) circle (1pt);
			\filldraw (0,1) circle (1pt);
			\filldraw (-1,1) circle (1pt);
			\filldraw (0,3) circle (1pt);
			\filldraw (0,2) circle (1pt);
			\filldraw (1,3) circle (1pt);
	\end{tikzpicture}} +
 \raisebox{-0.4\height}{\begin{tikzpicture}[line cap=round,line join=round,>=triangle 45,x=0.4cm,y=0.4cm]
 		\draw [line width=.5pt] (0.,0)-- (0,3);
 		\draw [line width=.5pt] (0.,2)-- (-1.,3.);
 		\draw [line width=.5pt] (0.,0)-- (1.,1.);
 		\draw[left] (0,0) node {\footnotesize{y}};
 		\draw[right] (-1,3) node {\footnotesize{x}};
 		\draw[right] (0,1) node {\footnotesize{y}};
 		\draw[right] (0,2) node {\footnotesize{y}};
 		\draw[right] (0,3) node {\footnotesize{y}};
 		\draw[above] (1,1) node {\footnotesize{x}};
 		\filldraw (0,0) circle (1pt);
 		\filldraw (0,1) circle (1pt);
 		\filldraw (-1,3) circle (1pt);
 		\filldraw (0,2) circle (1pt);
 		\filldraw (0,3) circle (1pt);
 		\filldraw (1,1) circle (1pt);
 \end{tikzpicture}} + 
2~\raisebox{-0.4\height}{\begin{tikzpicture}[line cap=round,line join=round,>=triangle 45,x=0.4cm,y=0.4cm]
		\draw [line width=.5pt] (0.,0)-- (0,3);
		\draw [line width=.5pt] (0.,0)-- (1.,1.);
		\draw [line width=.5pt] (0.,1)-- (-1.,2.);
		\draw[left] (0,0) node {\footnotesize{y}};
		\draw[right] (0,1) node {\footnotesize{y}};
		\draw[right] (-1,2) node {\footnotesize{x}};
		\draw[right] (0,2) node {\footnotesize{y}};
		\draw[right] (0,3) node {\footnotesize{y}};
		\draw[above] (1,1) node {\footnotesize{x}};
		\filldraw (0,0) circle (1pt);
		\filldraw (0,1) circle (1pt);
		\filldraw (-1,2) circle (1pt);
		\filldraw (0,3) circle (1pt);
		\filldraw (0,2) circle (1pt);
		\filldraw (1,1) circle (1pt);
\end{tikzpicture}}
	\end{align*}
	\end{Eg}

    
	\begin{Rq}\label{Rq:not_extension_prec_succ}
		
		Let $\emptyset$ be the empty tree and let us denote $\overline{\PTO}\coloneqq\PTO\bigcup\{\emptyset\}$ and $\overline{\calPTO}\coloneqq \K\overline{\PTO} = \K\cdot\oplus\calPTO$.
		It is impossible to define properly $<$ and $>$ onto $\overline{\PTO}$ such that $\emptyset$ is a unit for $\shuffle^T$ and extends the dendriform structure. Indeed, defining $\emptyset < \emptyset$ and $\emptyset > \emptyset$ would be inconsistent with ${\shuffle^T= < + >}$ and the dendriform axioms. Nevertheless, $\emptyset\shuffle^T \emptyset =\emptyset$ is well defined and we can still extend the products $<$ and $>$ by defining 
        \begin{equation} \label{eq:conv_left_right}
	 \forall T \in\PTO,~ T<\emptyset=\emptyset> T=T,\quad \emptyset<T=T>\emptyset=0.
	\end{equation}
     In particular, we are now allowed to write $\tdun{x} = B_+^x(\emptyset)$ and $\emptyset \shuffle^T T = T \shuffle^T\emptyset = T$. This will allow us in the rest of this paper to not specify the case where one (or more) tree has only one vertex, except for the initialisation case. Indeed, this would require many easy subcases to consider. Instead, we allow ourselves to write $S=B_+^x(S_1\cdots S_k)$ even in the case $S=\tdun{x}$. In this case, one has to understand that $k=0$ and that $S_i\shuffle^T T=T$ for any $T\in\calPT_\Omega$. 
	\end{Rq}

	
	Notice that the next result does hold only for non-empty trees.
	\begin{Prop} \label{prop:dend_struct}
	 Let $\Omega$ be a set. Then $(\calPT_\Omega,<,>)$ is a dendriform algebra.
	\end{Prop}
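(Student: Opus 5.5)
We prove the three dendriform axioms \eqref{eq:dend1}, \eqref{eq:dend2} and \eqref{eq:dend3} for trees $x=R$, $y=S$, $z=T$ in $\PTO$ simultaneously, by induction on $|R|+|S|+|T|$. By bilinearity this suffices. Throughout we use the conventions of Remark~\ref{Rq:not_extension_prec_succ}, so that we may write $R=B_+^a(R_1\cdots R_k)$ and $T=B_+^c(T_1\cdots T_\ell)$ with $k$ or $\ell$ possibly $0$. In that case the convention $R_k\shuffle^T U=U$ covers the one-vertex initialisations \eqref{eq:quasi_ini_dend} and \eqref{eq:quasi_ini_dend_2}. We also extend the induction hypothesis to the case where one argument is empty. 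The three axioms then reduce to $\emptyset\shuffle^T U=U$ and the conventions \eqref{eq:conv_left_right}, and these are checked directly. As a consequence, in the induction we may use associativity of $\shuffle^T$ on smaller triples, since summing the three axioms gives associativity of $\star=\shuffle^T$.

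\textbf{Axiom \eqref{eq:dend1}.} Here $<$ acts on the last subtree of the left argument. We have $R<S=B_+^a(R_1\cdots(R_k\shuffle^T S))$. Since this tree is not a single vertex (it has at least one child), applying $<$ again gives
\[
(R<S)<T=B_+^a\bigl(R_1\cdots((R_k\shuffle^T S)\shuffle^T T)\bigr).
\]
On the other side we get
\[
R<(S\shuffle^T T)=B_+^a\bigl(R_1\cdots(R_k\shuffle^T(S\shuffle^T T))\bigr).
\]
Equality follows from associativity of $\shuffle^T$ on the triple $(R_k,S,T)$, which is smaller. The case $k=0$ is immediate, because both sides equal $B_+^a(S\shuffle^T T)$.

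\textbf{Axiom \eqref{eq:dend3}.} This is the mirror argument. The product $>$ acts on the first subtree of the right argument. We have $S>T=B_+^c((S\shuffle^T T_1)\cdots T_\ell)$, hence
\[
R>(S>T)=B_+^c\bigl((R\shuffle^T(S\shuffle^T T_1))T_2\cdots T_\ell\bigr).
\]
We also have
\[
(R\shuffle^T S)>T=B_+^c\bigl(((R\shuffle^T S)\shuffle^T T_1)T_2\cdots T_\ell\bigr).
\]
Again associativity on $(R,S,T_1)$ closes the step.

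\textbf{Axiom \eqref{eq:dend2}.} This is the step I expect to be the main obstacle, because it mixes the two operations and one must track which vertex becomes the root. We have $R>S=B_+^b\bigl((R\shuffle^T S_1)S_2\cdots S_m\bigr)$ with $S=B_+^b(S_1\cdots S_m)$. Then $(R>S)<T$ inserts $T$ into the last subtree of this tree. There are two cases.

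\emph{Case $m\ge2$.} The last subtree is $S_m$, so both $(R>S)<T$ and $R>(S<T)$ equal
\[
B_+^b\bigl((R\shuffle^T S_1)S_2\cdots(S_m\shuffle^T T)\bigr).
\]

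\emph{Case $m=1$.} The single subtree is $R\shuffle^T S_1$, so we get $B_+^b\bigl((R\shuffle^T S_1)\shuffle^T T\bigr)$. On the other side, $S<T=B_+^b(S_1\shuffle^T T)$, which gives $R>(S<T)=B_+^b\bigl(R\shuffle^T(S_1\shuffle^T T)\bigr)$. Associativity on the smaller triple $(R,S_1,T)$ concludes.

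\emph{Case $m=0$.} Here $S=\tdun{b}$, and both sides equal $B_+^b(R\shuffle^T T)$.

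The key care is to check that the tree $R>S$ always has a last child. This holds since $R$ is non-empty, and it is exactly why the statement holds only for non-empty trees.
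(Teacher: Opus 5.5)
Your proof is correct and follows essentially the same route as the paper: induction on the total number of vertices, unfolding $<$ and $>$ through $B_+$ and closing each axiom with associativity of $\shuffle^T$ on a smaller triple, which the induction hypothesis supplies. Your explicit split of \eqref{eq:dend2} into the cases $m\ge 2$, $m=1$, $m=0$ is slightly more careful than the paper: its single display tacitly assumes the middle tree's root has at least two children, and it silently needs associativity on $(S,T_1,U)$ when that root has exactly one child.
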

	\begin{proof}
	 Let us prove equations \eqref{eq:dend1}, \eqref{eq:dend2} and \eqref{eq:dend3} on three trees $S$, $T$ and $U$ inductively on $|S|+|T|+|U|$.
	 
	 \begin{description} 
	 	\item[Initialisation:] if $|S|+|T|+|U|=3$, then there exist $x$, $y$ and $z$ in $\Omega$ such that $S=\tdun{x}$, $T=\tdun{y}$ and $U=\tdun{z}$. Then for equation \eqref{eq:dend1} we have
	 	\begin{equation*}
	 		(\tdun{x}<\tdun{y})<\tdun{z} = \tddeux{x}{y}<\tdun{z} = B_+^x(\tdun{y}\shuffle^T\tdun{z}) = \tdun{x}<(\tdun{y}\shuffle^T\tdun{z})
	 	\end{equation*}
	 	where we used respectively twice equation~\eqref{eq:quasi_ini_dend} as well as the definition of $<$. Using exactly the same strategy for equation~\eqref{eq:dend2} we find on the one hand
	 	\begin{equation*}
	 		(\tdun{x}>\tdun{y})<\tdun{z} = \tddeux{y}{x}<\tdun{z} = B_+^y(\tdun{x}\shuffle^T\tdun{z}).
	 	\end{equation*}
	 	On the other hand, using equation~\ref{eq:quasi_ini_dend_2} we also find
	 	\begin{equation*}
	 		\tdun{x}>(\tdun{y}<\tdun{z}) = \tdun{x}>\tddeux{y}{z} =B_+^y(\tdun{x}\shuffle^T\tdun{z}).
	 	\end{equation*}
	 	So equation \eqref{eq:dend2} holds for trees with only one vertex. Similarly, we also have
	 	\begin{equation*}
	 		\tdun{x}>(\tdun{y}>\tdun{z}) = \tdun{x}>\tddeux{z}{y} = B_+^z(\tdun{x}\shuffle^T\tdun{y})=(\tdun{x}\shuffle^T\tdun{y})>\tdun{z}.
	 	\end{equation*}
	 	So equation \eqref{eq:dend3} also holds for trees with only one vertex.
	 	
	 	\item[Heredity: ] now assume that for $N\geq3$ the relations~\eqref{eq:dend1}, \eqref{eq:dend2} and \eqref{eq:dend3} hold for any triplets of trees $(S,T,U)$ such that $|S|+|T|+|U|\leq N$. Then in particular, for any such triplet of trees, we have $(S\shuffle^T T)\shuffle^T U=S\shuffle^T(T\shuffle^T U)$. 
	 	
	 	Now, let $S$, $T$ and $U$ be a triplet of trees such that $|S|+|T|+|U|=N+1$. Then there exist $(x,y,z)\in\Omega^3,(k,\ell,m)\in\N^3$ and $k+\ell+m$ trees (possibly empty) such that
	 	\begin{equation*}
	 		S=B_+^x(S_1\cdots S_k),\qquad T=B_+^y(T_1\cdots T_\ell),\qquad U=B_+^z(U_1\cdots U_m).
	 	\end{equation*}
	 	We then have
	 	\begin{align*}
	 		(S<T)<U & = B_+^x\big(S_1\cdots(S_k\shuffle^T T)\big)<U \quad\text{by definition of }< \\
	 		& = B_+^x\big(S_1\cdots((S_k\shuffle^T T)\shuffle^T U)\big) \quad\text{by definition of }< \\
	 		& = B_+^x\big(S_1\cdots (S_k\shuffle^T (T\shuffle^T U))\big) \quad\text{by the induction hypothesis} \\
	 		& = S<(T \shuffle^T U) \quad\text{by definition of }< .
	 	\end{align*}
	 	Notice that this computation stays valid if $k=0$. In this case $S_k\shuffle^T T$ should be read as $T$ and the result still holds. So equation \eqref{eq:dend1} holds at rank $N+1$.
	 	
	 	Now, using first the definition of $>$ then the definition of $<$ we find on the one hand
	 	\begin{equation*}
	 		(S>T)<U = B_+^y\big((S\shuffle^T T_1)\cdots T_\ell\big)<U = B_+^y\big((S\shuffle^T T_1)\cdots (T_\ell\shuffle^T U)\big).
	 	\end{equation*}
	 	On the other hand, using first the definition of $<$ then the definition of $>$ we have
	 	\begin{equation*}
	 		S>(T<U) = S>B_+^y\big(T_1\cdots(T_\ell\shuffle^T  U)\big) = B_+^y\big((S\shuffle^T T_1)\cdots (T_\ell\shuffle^T U)\big)
	 	\end{equation*}
	 	so equation \eqref{eq:dend2} also holds at rank $N+1$. Notice that once again if $\ell=0$, then both side reduce to $B_+^y(S\shuffle^T U)$, so that equation \eqref{eq:dend2} also holds in this case.
	 	Finally, we have
	 	\begin{align*}
	 		S>(T>U) & = S>B_+^z\big((T\shuffle^T U_1)\cdots U_m\big) \quad\text{by definition of }> \\
	 		& = B_+^z\big((S\shuffle^T (T\shuffle^T U_1))\cdots U_m\big) \quad\text{by definition of }> \\
	 		& = B_+^z\big(((S\shuffle^T T)\shuffle^T U_1)\cdots U_m\big) \quad\text{by the induction hypothesis} \\
	 		& = (S\shuffle^T T)>U \quad\text{by definition of }>
	 	\end{align*}
	 	and again the computation still holds if $m=0$. So equation \eqref{eq:dend3} also holds in this case. \qedhere
	 \end{description}
	\end{proof}
	

\subsection{Tridendriform structure for planar rooted trees}

    Given a semigroup $(\Omega,+)$, we now give a tridendriform structure on the vector space of planar rooted trees $\calPTO$. This construction will closely follow the one described in subsection~\ref{subsec:dend}.
	\begin{defi} \label{defi:quasishuffle_trees}
	 Let $(\Omega,+)$ be a semigroup. 
    We define $\cshuffle$ by induction  over $\overline{\PTO}=\{\emptyset\}\bigcup \PTO$ and $<$, $>$ and $\bullet$ over $\PTO$ by:
    \begin{enumerate}
    	\item for all $T\in\overline{\PTO}, \emptyset \cshuffle^T T= T \cshuffle^T \emptyset = T$;
    	\item when $S=B_+^x(S_1\cdots S_k)$ (where $S_1\cdots S_k$ can be empty), we define:
    	\[
    	S < T\coloneqq B_+^x\left(S_1\cdots(S_k\cshuffle^T T)\right).
    	\]
    	\item when $T=B_+^{y}(T_1\cdots T_l)$ (where $T_1\cdots T_l$ can be empty), we define:
    	\[
    	S > T := B_+^y\left((S\cshuffle^T T_1)\cdots T_l\right).
    	\]
    	\item when $S=B_+^x(S_1\cdots S_k)$ and $T=B_+^{y}(T_1\cdots T_l)$ (where $S_1\cdots S_k$ and $T_1\cdots T_l$
    	can be empty), we define:
    	\[
    	S\bullet T:= B_+^{x+y}\left(S_1\cdots(S_k\cshuffle^T T_1)\cdots T_\ell\right),
    	\]
    \end{enumerate}
    where $\cshuffle^T=< + > +\bullet.$
	 We then extend $<,>$ and $\bullet$ into products on $\calPT_\Omega$ and $\cshuffle^T$ to a product on $\overline{\calPTO}$ by bilinearity.
	Note that we use the same conventions as in the dendriform case (equation \eqref{eq:conv_left_right} together with $S\bullet\emptyset=\emptyset\bullet S=0$ for any $S$ in $\PTO$) to cover the induction case.
	\end{defi}
    
    \begin{Rq}
        A dendriform product on $\calPTO$ presented in subsection~\ref{subsec:dend} was introduced for \emph{binary} rooted trees~\cite{catoire2025tridendriform}. A tridendriform product was also defined in the same paper for \emph{Schroeder} rooted trees, or (in the case of $\calPTN$) with one constraint on the decoration. The undecorated version of this tridendriform product, is related to the combinatorics of the associahedron, was previously introduced~\cite{chapoton2000bigebres,loday2002trialgebras}. A different generalisation to decorated trees was proposed in~\cite{Catoire_23}.
    \end{Rq}
    
    \begingroup%
    \allowdisplaybreaks%
		\begin{Eg}
			For instance, we compute some examples of such products below:
			\begin{align*}
				\labpoint{1} \cshuffle^T \labpoint{2} &= \tddeux{1}{2} + \tddeux{2}{1} + \labpoint{3}, \\
				\labY{2}{1}{3} \cshuffle^T \labY{3}{1}{2} &= 
				\raisebox{-0.4\height}{\begin{tikzpicture}[line cap=round,line join=round,>=triangle 45,x=0.4cm,y=0.4cm]
						\draw [line width=.5pt] (0.,0)-- (0,3);
						\draw [line width=.5pt] (0.,0)-- (-1.,1.);
						\draw [line width=.5pt] (0.,1)-- (1.,2.);
						\draw[right] (0,0) node {\footnotesize{2}};
						\draw[right] (-1,1) node {\footnotesize{1}};
						\draw[right] (0,1) node {\footnotesize{3}};
						\draw[right] (0,2) node {\footnotesize{1}};
						\draw[right] (0,3) node {\footnotesize{3}};
						\draw[above] (1,2) node {\footnotesize{2}};
						\filldraw (0,0) circle (1pt);
						\filldraw (0,1) circle (1pt);
						\filldraw (-1,1) circle (1pt);
						\filldraw (0,3) circle (1pt);
						\filldraw (0,2) circle (1pt);
						\filldraw (1,2) circle (1pt);
				\end{tikzpicture}} +
			\raisebox{-0.4\height}{\begin{tikzpicture}[line cap=round,line join=round,>=triangle 45,x=0.4cm,y=0.4cm]
					\draw [line width=.5pt] (0.,0)-- (0,3);
					\draw [line width=.5pt] (0.,0)-- (-1.,1.);
					\draw [line width=.5pt] (0.,1)-- (1.,2.);
					\draw[right] (0,0) node {\footnotesize{2}};
					\draw[right] (-1,1) node {\footnotesize{1}};
					\draw[right] (0,1) node {\footnotesize{3}};
					\draw[right] (0,2) node {\footnotesize{3}};
					\draw[right] (0,3) node {\footnotesize{1}};
					\draw[above] (1,2) node {\footnotesize{2}};
					\filldraw (0,0) circle (1pt);
					\filldraw (0,1) circle (1pt);
					\filldraw (-1,1) circle (1pt);
					\filldraw (0,3) circle (1pt);
					\filldraw (0,2) circle (1pt);
					\filldraw (1,2) circle (1pt);
			\end{tikzpicture}} +
				\raisebox{-0.4\height}{\begin{tikzpicture}[line cap=round,line join=round,>=triangle 45,x=0.4cm,y=0.4cm]
						\draw [line width=.5pt] (0.,0)-- (0,3);
						\draw [line width=.5pt] (0.,0)-- (-1.,1.);
						\draw [line width=.5pt] (0.,2)-- (1.,3.);
						\draw[right] (0,0) node {\footnotesize{2}};
						\draw[right] (-1,1) node {\footnotesize{1}};
						\draw[right] (0,1) node {\footnotesize{3}};
						\draw[right] (0,2) node {\footnotesize{3}};
						\draw[right] (0,3) node {\footnotesize{1}};
						\draw[above] (1,3) node {\footnotesize{2}};
						\filldraw (0,0) circle (1pt);
						\filldraw (0,1) circle (1pt);
						\filldraw (-1,1) circle (1pt);
						\filldraw (0,3) circle (1pt);
						\filldraw (0,2) circle (1pt);
						\filldraw (1,3) circle (1pt);
				\end{tikzpicture}} +
				\raisebox{-0.4\height}{\begin{tikzpicture}[line cap=round,line join=round,>=triangle 45,x=0.4cm,y=0.4cm]
						\draw [line width=.5pt] (0.,0)-- (0,3);
						\draw [line width=.5pt] (0.,2)-- (-1.,3.);
						\draw [line width=.5pt] (0.,0)-- (1.,1.);
						\draw[left] (0,0) node {\footnotesize{3}};
						\draw[right] (0,1) node {\footnotesize{1}};
						\draw[right] (0,2) node {\footnotesize{2}};
						\draw[right] (-1,3) node {\footnotesize{1}};
						\draw[right] (0,3) node {\footnotesize{3}};
						\draw[above] (1,1) node {\footnotesize{2}};
						\filldraw (0,0) circle (1pt);
						\filldraw (0,1) circle (1pt);
						\filldraw (-1,3) circle (1pt);
						\filldraw (0,2) circle (1pt);
						\filldraw (0,3) circle (1pt);
						\filldraw (1,1) circle (1pt);
				\end{tikzpicture}} + 
				\raisebox{-0.4\height}{\begin{tikzpicture}[line cap=round,line join=round,>=triangle 45,x=0.4cm,y=0.4cm]
						\draw [line width=.5pt] (0.,0)-- (0,3);
						\draw [line width=.5pt] (0.,0)-- (1.,1.);
						\draw [line width=.5pt] (0.,1)-- (-1.,2.);
						\draw[left] (0,0) node {\footnotesize{3}};
						\draw[right] (0,1) node {\footnotesize{2}};
						\draw[right] (-1,2) node {\footnotesize{1}};
						\draw[right] (0,2) node {\footnotesize{3}};
						\draw[right] (0,3) node {\footnotesize{1}};
						\draw[above] (1,1) node {\footnotesize{2}};
						\filldraw (0,0) circle (1pt);
						\filldraw (0,1) circle (1pt);
						\filldraw (-1,2) circle (1pt);
						\filldraw (0,3) circle (1pt);
						\filldraw (0,2) circle (1pt);
						\filldraw (1,1) circle (1pt);
				\end{tikzpicture}}+
			\raisebox{-0.4\height}{\begin{tikzpicture}[line cap=round,line join=round,>=triangle 45,x=0.4cm,y=0.4cm]
					\draw [line width=.5pt] (0.,0)-- (0,3);
					\draw [line width=.5pt] (0.,0)-- (1.,1.);
					\draw [line width=.5pt] (0.,1)-- (-1.,2.);
					\draw[left] (0,0) node {\footnotesize{3}};
					\draw[right] (0,1) node {\footnotesize{2}};
					\draw[right] (-1,2) node {\footnotesize{1}};
					\draw[right] (0,2) node {\footnotesize{1}};
					\draw[right] (0,3) node {\footnotesize{3}};
					\draw[above] (1,1) node {\footnotesize{2}};
					\filldraw (0,0) circle (1pt);
					\filldraw (0,1) circle (1pt);
					\filldraw (-1,2) circle (1pt);
					\filldraw (0,3) circle (1pt);
					\filldraw (0,2) circle (1pt);
					\filldraw (1,1) circle (1pt);
			\end{tikzpicture}} \\
			+
			\raisebox{-0.4\height}{\begin{tikzpicture}[line cap=round,line join=round,>=triangle 45,x=0.4cm,y=0.4cm]
				\draw [line width=.5pt] (0.,0)-- (0,2);
				\draw [line width=.5pt] (0.,0)-- (1.,1.);
				\draw [line width=.5pt] (0.,1)-- (-1.,2.);
				\draw[left] (0,0) node {\footnotesize{3}};
				\draw[right] (0,1) node {\footnotesize{2}};
				\draw[right] (-1,2) node {\footnotesize{1}};
				\draw[right] (0,2) node {\footnotesize{4}};
				\draw[above] (1,1) node {\footnotesize{2}};
				\filldraw (0,0) circle (1pt);
				\filldraw (0,1) circle (1pt);
				\filldraw (-1,2) circle (1pt);
				\filldraw (0,2) circle (1pt);
				\filldraw (1,1) circle (1pt);
				\end{tikzpicture}}+&
			\raisebox{-0.4\height}{\begin{tikzpicture}[line cap=round,line join=round,>=triangle 45,x=0.4cm,y=0.4cm]
			\draw [line width=.5pt] (0.,0)-- (0,2);
			\draw [line width=.5pt] (0.,1)-- (1.,2.);
			\draw [line width=.5pt] (0.,0)-- (-1.,1.);
			\draw[left] (0,0) node {\footnotesize{2}};
			\draw[right] (0,1) node {\footnotesize{3}};
			\draw[right] (1,2) node {\footnotesize{2}};
			\draw[right] (0,2) node {\footnotesize{4}};
			\draw[above] (-1,1) node {\footnotesize{1}};
			\filldraw (0,0) circle (1pt);
			\filldraw (0,1) circle (1pt);
			\filldraw (1,2) circle (1pt);
			\filldraw (0,2) circle (1pt);
			\filldraw (-1,1) circle (1pt);
			\end{tikzpicture}}+
		\raisebox{-0.4\height}{\begin{tikzpicture}[line cap=round,line join=round,>=triangle 45,x=0.4cm,y=0.4cm]
		\draw [line width=.5pt] (0,0) -- (0,2);
		\draw [line width=.5pt] (0,0) -- (-1,1);
		\draw [line width=.5pt] (0,0) -- (1,1);
		\filldraw (0,0) circle (1pt);
		\filldraw (0,1) circle (1pt);
		\filldraw (-1,1) circle (1pt);
		\filldraw (1,1) circle (1pt);
        \filldraw (0,2) circle (1pt);
		\draw (0,0) node[below] {\footnotesize{$5$}};
		\draw (-1,1) node[below] {\footnotesize{$1$}};
		\draw (0,1) node[left] {\footnotesize{$3$}};
		\draw (0,2) node[left] {\footnotesize{$1$}};
		\draw (1,1) node[below] {\footnotesize{$2$}};
	\end{tikzpicture}}+
		\raisebox{-0.4\height}{\begin{tikzpicture}[line cap=round,line join=round,>=triangle 45,x=0.4cm,y=0.4cm]
		\draw [line width=.5pt] (0,0) -- (0,2);
		\draw [line width=.5pt] (0,0) -- (-1,1);
		\draw [line width=.5pt] (0,0) -- (1,1);
		\filldraw (0,0) circle (1pt);
		\filldraw (0,1) circle (1pt);
		\filldraw (0,2) circle (1pt);
		\filldraw (-1,1) circle (1pt);
		\filldraw (1,1) circle (1pt);
		\draw (0,0) node[below] {\footnotesize{$5$}};
		\draw (-1,1) node[below] {\footnotesize{$1$}};
		\draw (0,1) node[left] {\footnotesize{$1$}};
		\draw (0,2) node[left] {\footnotesize{$3$}};
		\draw (1,1) node[below] {\footnotesize{$2$}};
		\end{tikzpicture}}+
		\raisebox{-0.4\height}{\begin{tikzpicture}[line cap=round,line join=round,>=triangle 45,x=0.4cm,y=0.4cm]
		\draw [line width=.5pt] (0,0) -- (0,1);
		\draw [line width=.5pt] (0,0) -- (-1,1);
		\draw [line width=.5pt] (0,0) -- (1,1);
		\filldraw (0,0) circle (1pt);
		\filldraw (0,1) circle (1pt);
		\filldraw (-1,1) circle (1pt);
		\filldraw (1,1) circle (1pt);
		\draw (0,0) node[below] {\footnotesize{$5$}};
		\draw (-1,1) node[left] {\footnotesize{$1$}};
		\draw (0,1) node[above] {\footnotesize{$4$}};
		\draw (1,1) node[right] {\footnotesize{$2$}};
        \end{tikzpicture}}+
			\raisebox{-0.4\height}{\begin{tikzpicture}[line cap=round,line join=round,>=triangle 45,x=0.4cm,y=0.4cm]
			\draw [line width=.5pt] (0.,0)-- (0,2);
			\draw [line width=.5pt] (0.,1)-- (1.,2.);
			\draw [line width=.5pt] (0.,0)-- (-1.,1.);
			\draw[left] (0,0) node {\footnotesize{2}};
			\draw[right] (0,1) node {\footnotesize{6}};
			\draw[right] (1,2) node {\footnotesize{2}};
			\draw[right] (0,2) node {\footnotesize{1}};
			\draw[above] (-1,1) node {\footnotesize{1}};
			\filldraw (0,0) circle (1pt);
			\filldraw (0,1) circle (1pt);
			\filldraw (1,2) circle (1pt);
			\filldraw (0,2) circle (1pt);
			\filldraw (-1,1) circle (1pt);
\end{tikzpicture}} +
			\raisebox{-0.4\height}{\begin{tikzpicture}[line cap=round,line join=round,>=triangle 45,x=0.4cm,y=0.4cm]
				\draw [line width=.5pt] (0.,0)-- (0,2);
				\draw [line width=.5pt] (0.,0)-- (1.,1.);
				\draw [line width=.5pt] (0.,1)-- (-1.,2.);
				\draw[left] (0,0) node {\footnotesize{3}};
				\draw[right] (0,1) node {\footnotesize{3}};
				\draw[right] (-1,2) node {\footnotesize{1}};
				\draw[right] (0,2) node {\footnotesize{3}};
				\draw[above] (1,1) node {\footnotesize{2}};
				\filldraw (0,0) circle (1pt);
				\filldraw (0,1) circle (1pt);
				\filldraw (-1,2) circle (1pt);
				\filldraw (0,2) circle (1pt);
				\filldraw (1,1) circle (1pt);
				\end{tikzpicture}}.
			\end{align*}
	\end{Eg}
	\endgroup%

	We then have a result which is the counterpart to proposition \ref{prop:dend_struct}. Its proof is similar to the proof of proposition \ref{prop:dend_struct} and we omit here to not purposelessly make this section too long.
	\begin{Prop} \label{prop:tridend_struct}
	 Let $(\Omega,+)$ be a semigroup. Then $(\calPT_\Omega,<,>,\bullet)$ is a tridendriform algebra.
	\end{Prop}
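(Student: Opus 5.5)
We prove the seven relations \eqref{eq:tri1}--\eqref{eq:tri7} for three trees $S,T,U\in\PTO$ simultaneously, by induction on $|S|+|T|+|U|$, following the proof of proposition~\ref{prop:dend_struct}. Throughout we write
\[
S=B_+^x(S_1\cdots S_k),\quad T=B_+^y(T_1\cdots T_\ell),\quad U=B_+^z(U_1\cdots U_m),
\]
allowing $k,\ell,m=0$ with the conventions of definition~\ref{defi:quasishuffle_trees} (so $\emptyset\cshuffle^T V=V$).

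The key observation is that once the seven relations hold for all triples of total size at most $N$, summing them gives associativity of $*=\cshuffle^T$ on such triples. Every computation at rank $N+1$ then only invokes $\cshuffle^T$-associativity on strictly smaller triples. These smaller triples are of the form $(S_k,T,U)$, $(S,T_1,U_1)$, $(S_k,T_1,U_1)$, $(S,T,U_1)$, $(S_k,T_\ell,U_1)$, etc. Some entries may be empty, but associativity with $\emptyset$ is trivial since $\emptyset$ is a unit for $\cshuffle^T$. The initialisation with three single vertices is a direct check, and in fact it is subsumed by the general computation with $k=\ell=m=0$.

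The heredity step unfolds both sides of each relation using the definitions:
\begin{itemize}
\item \eqref{eq:tri1}: $(S<T)<U=B_+^x(S_1\cdots((S_k\cshuffle^T T)\cshuffle^T U))$, and associativity gives $S<(T\cshuffle^T U)$.
\item \eqref{eq:tri2}: both sides equal $B_+^y((S\cshuffle^T T_1)\cdots(T_\ell\cshuffle^T U))$.
\item \eqref{eq:tri3}: both sides equal $B_+^z(((S\cshuffle^T T)\cshuffle^T U_1)\cdots U_m)$, using associativity.
\item \eqref{eq:tri4}: both sides equal $B_+^{y+z}((S\cshuffle^T T_1)\cdots(T_\ell\cshuffle^T U_1)\cdots U_m)$.
\item \eqref{eq:tri5}: both sides equal $B_+^{x+z}(S_1\cdots(S_k\cshuffle^T T\cshuffle^T U_1)\cdots U_m)$, after reassociating.
\item \eqref{eq:tri6}: both sides equal $B_+^{x+y}(S_1\cdots(S_k\cshuffle^T T_1)\cdots(T_\ell\cshuffle^T U))$.
\item \eqref{eq:tri7}: both sides equal $B_+^{x+y+z}(S_1\cdots(S_k\cshuffle^T T_1)\cdots(T_\ell\cshuffle^T U_1)\cdots U_m)$, using associativity of $+$ in $\Omega$.
\end{itemize}

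The main obstacle is the bookkeeping of degenerate cases where $\ell=0$ in \eqref{eq:tri6}, \eqref{eq:tri7} and \eqref{eq:tri4}. There the middle tree is a single vertex, and the two grafting positions $S_k\cshuffle^T T_1$ and $T_\ell\cshuffle^T U_1$ collapse into a single slot $S_k\cshuffle^T U_1$. One must check that the convention $\emptyset\cshuffle^T V=V$ yields the same forest on both sides. For example, in \eqref{eq:tri7} with $\ell=0$, both sides should read $B_+^{x+y+z}(S_1\cdots(S_k\cshuffle^T U_1)\cdots U_m)$.

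I would verify these cases explicitly, including when also $k=0$ or $m=0$. In those cases the conventions $S\bullet\emptyset=0$ and $\emptyset<T=0$ are never triggered, since all three arguments are non-empty trees. This confirms that the forest-level identities hold uniformly and completes the induction.
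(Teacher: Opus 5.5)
Your proposal is correct and takes the same approach as the paper, which omits this proof as analogous to that of Proposition~\ref{prop:dend_struct}. That approach is a strong induction on $|S|+|T|+|U|$: each of the seven relations is unfolded through the recursive definitions and reduced to associativity of $\cshuffle^T$ on strictly smaller triples (obtained by summing the relations at lower rank), plus associativity of $+$ for the root decoration in \eqref{eq:tri7}. Your handling of the degenerate cases $\ell=0$ is sound. The case $\ell=1$, where the first and last children of $T$ coincide and associativity on triples such as $(S,T_1,U)$, $(S,T_1,U_1)$ or $(S_k,T_1,U_1)$ is genuinely used in \eqref{eq:tri2}, \eqref{eq:tri4}, \eqref{eq:tri6} and \eqref{eq:tri7}, is also covered by your induction hypothesis.
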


	\section{Description with the comb representations} \label{sec:combs}
	
	\subsection{Comb representation of trees and quasi-shuffles}
	
	 Fix $\Omega$ a set of decorations and let $t$ be a tree of $\PTO$. It can always be seen as a comb:
	\begin{center}
		\raisebox{-0.5\height}{\begin{tikzpicture}[line cap=round,line join=round,x=1em,y=1 em]
				\filldraw (0,1) circle (0.15);
				\draw (0,1)--(-2,2)
				node[above]{\footnotesize{$t^1_1$}};
				\draw (0,1)--(0,2) node[above]{\footnotesize{$t^1_{n_1}$}};
				\draw (-1,2) node{$\cdots$};
				\draw (0,1)--(7,8);
				\draw (3,4)--(1,5) node[above]{\footnotesize{$t^2_1$}};
				\filldraw (3,4) circle (0.15);
				\draw (3,4)--(3,5) node[above]{\footnotesize{$t^2_{n_2}$}};
				\draw (2,5) node{$\cdots$};
				\draw (4.7,6) node[rotate=45]{$\cdots$};
				\filldraw(7,8) circle(0.15);
				\filldraw (6,7) circle (0.15);
				\draw (6,7)--(4,8) node[above]{\footnotesize{$t^k_1$}};
				\draw (6,7)--(6,8) node[above]{\footnotesize{$t^k_{n_k}$}};
				\draw (5,8) node {$\cdots$};
				\draw (0,1) node[right] {\scriptsize{$\omega_1$}};
				\draw (3,4) node[right] {\scriptsize{$\omega_2$}};
				\draw (6,7) node[right] {\scriptsize{$\omega_{k}$}};
				\draw (7,8) node[right] {\scriptsize{$\omega_{k+1}$}};
		\end{tikzpicture}} or
		\raisebox{-0.5\height}{\begin{tikzpicture}[line cap=round,line join=round,x=1em,y=1 em]
				\filldraw (0,1) circle (0.15);
				\draw (0,1)--(2,2)
				node[above]{\footnotesize{$t^1_{m_1}$}};
				\draw (0,1)--(0,2) node[above]{\footnotesize{$t^1_{1}$}};
				\draw (1,2) node{$\cdots$};
				\draw (0,1)--(-7,8);
				\draw (-3,4)--(-1,5) node[above]{\footnotesize{$t^2_{m_2}$}};
				\filldraw (-3,4) circle (0.15);
				\draw (-3,4)--(-3,5) node[above]{\footnotesize{$t^2_{1}$}};
				\draw (-2,5) node{$\cdots$};
				\draw (-4.7,6) node[rotate=-45]{$\cdots$};
				\filldraw(-7,8) circle(0.15);
				\filldraw (-6,7) circle (0.15);
				\draw (-6,7)--(-4,8) node[above]{\footnotesize{$t^{l}_{m_{l}}$}};
				\draw (-6,7)--(-6,8) node[above]{\footnotesize{$t^{l}_{1}$}};
				\draw (-5,8) node {$\cdots$};
				\draw (0,1) node[left] {\scriptsize{$\omega_{k+2}$}};
				\draw (-3,4) node[left] {\scriptsize{$\omega_{k+3}$}};
				\draw (-6,7) node[left] {\scriptsize{$\omega_{k+l+1}$}};
				\draw (-7,8) node[left] {\scriptsize{$\omega_{k+l+2}$}};
		\end{tikzpicture}},
	\end{center}
	where $k+1$ (resp. $l+1$) is the number of nodes on the rightmost (resp. leftmost) branch of $t$, for $i\in\IEM{1}{k} \coloneqq \{1, 2, \ldots, k\}, n_i+1$ is the number of sons of the $i$-th node of this branch (resp. for $j\in\IEM{1}{l}, m_j+1$ is the number of sons of the $j$-th node of this branch) for all $j\in\IEM{1}{k+l+2}, \omega_j\in\Omega$.
	\begin{Not}
		Let $F=t_1\dots t_n$ be a planar forest composed of $n$ trees. For simplicity, we write:
		\[
		\raisebox{-0.3\height}{\begin{tikzpicture}[line cap=round,line join=round,x=0.3cm,y=0.3cm]
				\draw (0,1)--(-1,2) node[left]{$F$};
				\draw (0,1)--(1,2);
				\filldraw (1,2) circle (0.15);
				\fill (0,1) circle(0.15);
				\draw (0,1) node[left, below] {\footnotesize{$\omega$}};
				\draw (1,2) node[right] {\footnotesize{$\alpha$}};
		\end{tikzpicture}}  \text{ instead of }
		\raisebox{-0.5\height}{\begin{tikzpicture}[line cap=round,line join=round,x=0.3cm,y=0.3cm]
				\draw (0,1)--(-2,2) node[left,above]{$t_1$};
				\draw (0,1)--(0,2) node[right,above]{$t_{n}$};
				\draw (-1,2) node{$\cdots$};
				\draw (0,1)--(2,2);
				\filldraw (2,2) circle (0.15);
				\fill (0,1) circle(0.15);
				\draw (0,1) node[left,below] {\footnotesize{$\omega$}};
				\draw (2,2) node[right] {\footnotesize{$\alpha$}};
		\end{tikzpicture}}
		\text{ and }
		\raisebox{-0.3\height}{\begin{tikzpicture}[line cap=round,line join=round,x=0.3cm,y=0.3cm]
				\draw (0,1)--(1,2) node[right]{$F$};
				\draw (0,1)--(-1,2);
				\filldraw (-1,2) circle (0.15);
				\fill (0,1) circle(0.15);
				\draw (0,1) node[left, below] {\footnotesize{$\omega$}};
				\draw (-1,2) node[left] {\footnotesize{$\alpha$}};
		\end{tikzpicture}}  \text{ instead of }
		\raisebox{-0.5\height}{\begin{tikzpicture}[line cap=round,line join=round,x=0.3cm,y=0.3cm]
				\draw (0,1)--(2,2) node[left,above]{$t_{n}$};
				\draw (0,1)--(0,2) node[right,above]{$t_{1}$};
				\draw (1,2) node{$\cdots$};
				\draw (0,1)--(-2,2);
				\filldraw (-2,2) circle (0.15);
				\fill (0,1) circle(0.15);
				\draw (0,1) node[left, below] {\footnotesize{$\omega$}};
				\draw (-2,2) node[left] {\footnotesize{$\alpha$}};
		\end{tikzpicture}}.
		\]
	\end{Not}
	\begin{defi} \label{defi:combs}
	With these notations, we notice that all trees $t$ admit respectively a \emph{right comb representation} and a \emph{left comb representation}:
	\begin{align}\label{eq:tree_combs}
		t&=\decombright{F_1}{F_2}{F_k}{\omega_1}{\omega_2}{\omega_{k}}{\omega_{k+1}}
		& \text{ and } &&  t=\decombleft{F_{k+2}}{F_{k+3}}{F_{k+l+1}}{\omega_{k+2}}{\omega_{k+3}}{\omega_{k+l+1}}{\omega_{k+l+2}},
	\end{align}
	where $F_1,\dots,F_k$ and $F_{k+2},\dots, F_{k+l+1}$ are forests, possibly empty . In the following, we will consider that there exist two further forests $F_{k+1}$ and $F_{k+l+2}$, both empty and respectively grafted to the nodes decorated by $\omega_{k+1}$ and $\omega_{k+l+2}$.
		
	\end{defi}
	
	
	\begin{defi}[Quasi-shuffle]\label{def:quasi_shuffles}
		Let $k,l\in\N\setminus\{0\}$. A \emph{$(k,l)$-\qsh} is a surjective map $\sigma:\IEM{1}{k+l}\twoheadrightarrow\IEM{1}{n}$ for some positive integer $n$
		such that:
		\[
		\sigma(1)<\cdots<\sigma(k) \text{  and  } \sigma(k+1)<\cdots<\sigma(k+l).
		\]
		We will denote $\QSh(k,l)$ the set of all $(k,l)$-quasi-shuffles. When an element of $\QSh(k,l)$ is a bijection we call it a \emph{$(k,l)$-shuffle}. We denote the set of $(k,l)$-shuffles as $\Sh(k,l)$.
		
		Let $(k,l)\in(\N^*)^2,$ given $\sigma\in\QSh(k,l)$, we will denote it by $(\sigma(1),\ldots,\sigma(k+1),\ldots,\sigma(k+l))$.
	\end{defi}
	
	\begin{Eg}
		The map $\sigma:\IEM{1}{4} \rightarrow \IEM{1}{3}, 1 \mapsto 1, 2 \mapsto 3, 3 \mapsto 1,4 \mapsto 2$ is an element of $\QSh(2,2)$. It is denoted by $\sigma=(1,3,1,2).$
	\end{Eg}
	\begin{defi}\label{def:quasiaction} 
			Let $t$ and $s$ be two trees decorated by a semigroup $(\Omega,+)$ which respective right comb representation and left comb representation are given above (equation~\eqref{eq:tree_combs}).
			We denote by $k+1$ (resp. $l+1$) the number of nodes on the rightmost (resp. leftmost) branch of $t$ (resp. $s$). Let $\sigma$ be a $(k+1,l+1)$-quasi-shuffle  which has for image $\IEM{1}{n}$ with $n$ a non-negative integer. We build a tree from $\sigma$:
			\begin{enumerate}
				\item  We first consider the ladder with $n$ nodes:
				\begin{center}
					\raisebox{-0.5\height}
					{\begin{tikzpicture}[line cap=round,line join=round,>=triangle 45,x=0.3cm,y=0.3cm]
							\begin{scope}
								\draw (0,1)--(0,2.5);
								\draw[dashed] (0,2.5)--(0,5);
								\filldraw [black] (0,1) circle (2pt) node[anchor=west]{\footnotesize{Node $1$}};
								\filldraw [black] (0,2) circle (2pt) node[anchor=west]{\footnotesize{Node $2$}};
								\filldraw [black] (0,5) circle (2pt) node[anchor=west]{\footnotesize{Node $n$}};
							\end{scope}
					\end{tikzpicture}}.
				\end{center}
				\item For $i\in\IEM{1}{k+1},$ we graft $F_i$ (with its decorations) as the \emph{left} son at the node $\sigma(i)$.
				\item For $i\in\IEM{k+2}{k+l+2},$ we graft $F_i$ with its decorations as the \emph{right} son at the node $\sigma(i)$.
				\item For $j\in\IEM{1}{n}$, the node $j$ drawn above is decorated by:
				\begin{itemize}
					\item $\omega_s$ if $\sigma^{-1}(\{j\})=\{s\}$ with $s\in\IEM{1}{k+1}$;
					\item $\omega_r$ if $\sigma^{-1}(\{j\})=\{r\}$ with $r\in\IEM{k+2}{k+l+2}$;
					\item $\omega_s + \omega_r$ if $\sigma^{-1}(\{j\})=\{s,r\}$ with $(s,r)\in\IEM{1}{k+1}\times \IEM{k+2}{k+l+2}$.
				\end{itemize}
			\end{enumerate}
			We denote the tree obtained from this construction $\sigma(t,s)$. 
			If $\sigma$ is a $(k+1,l+1)$-shuffle, we only require for $\Omega$ to be a set.
		\end{defi}
	
	\begin{Eg}
		Consider $t=\labY{2}{1}{3}$, $s=\labY{3}{1}{2}$ and a $(2,2)$-quasi-shuffle $\sigma=(1,3,1,2)$. Hence:
		\[
		t=\raisebox{-0.4\height}{\begin{tikzpicture}[line cap=round,line join=round,>=triangle 45,x=0.4cm,y=0.4cm]
			\draw [line width=.5pt] (0.,1.)-- (-1.,2.);
			\draw [line width=.5pt] (0.,1.)-- (1.,2.);
			\draw[above] (0,1) node {\footnotesize{$2$}};
			\draw[above] (1,2) node {\footnotesize{3}};
			\draw[above] (-1,2) node {$F_1$};
			\filldraw (0,1) circle (1pt);
			\filldraw (1,2) circle (1pt);
			\end{tikzpicture}} \text{ with } F_1=\labpoint{1} \text{ and } s=\raisebox{-0.4\height}{\begin{tikzpicture}[line cap=round,line join=round,>=triangle 45,x=0.4cm,y=0.4cm]
			\draw [line width=.5pt] (0.,1.)-- (-1.,2.);
			\draw [line width=.5pt] (0.,1.)-- (1.,2.);
			\draw[above] (0,1) node {\footnotesize{$3$}};
			\draw[above] (-1,2) node {\footnotesize{1}};
			\draw[above] (1,2) node {$F_2$};
			\filldraw (0,1) circle (1pt);
			\filldraw (-1,2) circle (1pt);
		\end{tikzpicture}} \text{ with } F_2=\labpoint{2}, \text{ thus } 
	\sigma(t,s) =\raisebox{-0.4\height}{\begin{tikzpicture}[line cap=round,line join=round,>=triangle 45,x=0.4cm,y=0.4cm]
	\draw [line width=.5pt] (0,0) -- (0,2);
	\draw [line width=.5pt] (0,0) -- (-1,1);
	\draw [line width=.5pt] (0,0) -- (1,1);
	\filldraw (0,0) circle (1pt);
	\filldraw (0,1) circle (1pt);
	\filldraw (0,2) circle (1pt);
	\filldraw (-1,1) circle (1pt);
	\filldraw (1,1) circle (1pt);
	\draw (0,0) node[below] {\footnotesize{$5$}};
	\draw (-1,1) node[below] {\footnotesize{$1$}};
	\draw (0,1) node[left] {\footnotesize{$1$}};
	\draw (0,2) node[left] {\footnotesize{$3$}};
	\draw (1,1) node[below] {\footnotesize{$2$}};
\end{tikzpicture}}.
		\]

	\end{Eg}
	
	\subsection{Combs and the products of planar trees}

	We can now prove the main result of this section, which we will need later on
	\begin{thm} \label{thm:comb}
		Let $\Omega$ be a set (resp. $(\Omega,+)$ be a semigroup).
	 The products $\shuffle^T$  (resp. $\cshuffle^T$ ) of $\calPTO$ are defined explicitly for any pair $(t,s)\in\calPTO^2$ by:
	 \begin{align*}
	 	t \shuffle^T s = \sum_{\sigma\in\Sh(k+1,l+1)} \sigma(t,s) \quad
	 	\left(\text{\normalfont{ resp. }} 
	 	 t \cshuffle^T s = \sum_{\sigma\in\QSh(k+1,l+1)} \sigma(t,s) \right)
	 \end{align*}
	 where $k+1$ (resp. $l+1$) is the number of nodes in the rightmost (resp. leftmost) branch of $t$ (resp. of $s$).
	\end{thm}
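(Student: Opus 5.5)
We will prove the quasi-shuffle statement for $\cshuffle^T$; the shuffle case for $\shuffle^T$ follows verbatim by discarding the middle product $\bullet$ and the non-injective quasi-shuffles.

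Denote by $\Phi(t,s)$ the right-hand side $\sum_{\sigma\in\QSh(k+1,l+1)}\sigma(t,s)$. We argue by induction on $|t|+|s|$ and show that $\Phi$ satisfies the same recursion as $\cshuffle^T = {<} + {>} + \bullet$ in definition~\ref{defi:quasishuffle_trees}. Since the recursion determines the product uniquely, equality follows.

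\textbf{Splitting the sum.} Partition $\QSh(k+1,l+1)$ according to the preimage of the bottom node $1$ of the ladder:
\begin{itemize}
\item $\sigma^{-1}(1)=\{1\}$;
\item $\sigma^{-1}(1)=\{k+2\}$;
\item $\sigma^{-1}(1)=\{1,k+2\}$.
\end{itemize}
These cases are exhaustive because $\sigma(1)$ and $\sigma(k+2)$ are the minima of the two monotone blocks. We claim the three partial sums equal $t<s$, $t>s$ and $t\bullet s$ respectively.

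\textbf{Middle case.} Write $t=B_+^{\omega_1}(F_1\, t')$, where $t'$ is the subtree rooted at the second node of the rightmost branch, and $s=B_+^{\omega_{k+2}}(s'\,F_{k+2})$, where $s'$ is the subtree rooted at the second node of the leftmost branch. A quasi-shuffle with $\sigma(1)=\sigma(k+2)=1$ restricts, after removing the first entry of each block and shifting values down by one, to a $(k,l)$-quasi-shuffle $\tau$. This restriction is a bijection. The tree $\sigma(t,s)$ is the root decorated $\omega_1+\omega_{k+2}$, carrying $F_1$ on the left, $\tau(t',s')$ in the middle, and $F_{k+2}$ on the right.

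Here we must check a point about grafting. In definition~\ref{def:quasiaction}, grafting $F_i$ as the left son and $F_j$ as the right son around the ladder edge reproduces, at the root, the children sequence $F_1\,(\text{ladder rest})\,F_{k+2}$. The right comb of $t'$ is the tail of the right comb of $t$, and similarly the left comb of $s'$ is the tail of that of $s$. By induction $\sum_\tau \tau(t',s')=t'\cshuffle^T s'$, and this sum is exactly $t\bullet s=B_+^{\omega_1+\omega_{k+2}}(F_1\,(t'\cshuffle^T s')\,F_{k+2})$.

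The degenerate cases, where $t'$ or $s'$ is empty (i.e. $k=0$ or $l=0$), must be matched with the convention $\emptyset\cshuffle^T T=T$. When $k=0$, only one entry remains in the $t$-block. Then $\tau$ ranges over maps that place $s$'s remaining comb alone on a ladder, which gives exactly $s'$ once its left comb is reassembled.

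\textbf{Left and right cases.} If $\sigma^{-1}(1)=\{1\}$, the root is $\omega_1$ with $F_1$ on its left. The remainder is the ladder built from $\tau\in\QSh(k,l+1)$ applied to $(t',s)$, so the partial sum is $B_+^{\omega_1}(F_1\,(t'\cshuffle^T s))=t<s$ by induction. The case $\sigma^{-1}(1)=\{k+2\}$ is symmetric and gives $B_+^{\omega_{k+2}}((t\cshuffle^T s')\,F_{k+2})=t>s$.

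\textbf{Base and main obstacle.} The base case is $t,s$ single vertices, where we check directly the three terms $\tddeux{x}{y}$, $\tddeux{y}{x}$ and $\labpoint{x+y}$. We expect the main obstacle to be a careful bookkeeping lemma: for a ladder tree $\tau(t',s')$, its leftmost and rightmost combs must be the combs of $t'$ and $s'$ with the forests in the right positions, so that reattaching $F_1$ or $F_{k+2}$ commutes with building the ladder. We plan to prove this first, directly from definition~\ref{def:quasiaction}, before running the induction.
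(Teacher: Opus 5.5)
Your proof is correct and is essentially the paper's own argument. The paper also writes $t\cshuffle^T s=t<s+t>s+t\bullet s$, applies the induction hypothesis to $(t',s)$, $(t,s')$ and $(t',s')$, and reindexes $\QSh(k,l+1)$, $\QSh(k+1,l)$ and $\QSh(k,l)$ as the three classes of $\QSh(k+1,l+1)$ determined by $\sigma^{-1}(\{1\})$; you read the same correspondence in the opposite direction and induct on $|t|+|s|$ instead of $k+l$. Two small remarks: the ``bookkeeping lemma'' you anticipate follows immediately from the locality of definition~\ref{def:quasiaction}, and when $k=0$ no entry (not one) remains in the $t$-block after removing its first entry.
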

	\begin{proof}
		To prove this theorem, one just needs to check that these combinatorial descriptions satisfy the inductive constructions of definitions~\ref{defi:shuffle_trees} and~\ref{defi:quasishuffle_trees}. We only check the ones of definition~\ref{defi:quasishuffle_trees}. The other ones are shown similarly setting the product $\bullet$ to zero. To prove the combinatorial description of $t\cshuffle^T s$, we do an inductive proof over the sum of $k+1$ and $l+1$  given in equation~\ref{eq:tree_combs}.
		\begin{description}
			\item[Initialisation:] to initialise, we begin with $k+l=0,$ that is to say $k+1=1$ and $l+1=1$  as our trees are non-empty.
			As a consequence, $t=\labpoint{\omega}$ and $s=\labpoint{\alpha}$ 
			\begin{align*}
				(1,2)\left(\labpoint{\omega},\labpoint{\alpha}\right) + (2,1)\left(\labpoint{\omega},\labpoint{\alpha}\right) + (1,1)\left(\labpoint{\omega},\labpoint{\alpha}\right) &=
				\tddeux{\omega}{\alpha} + \tddeux{\alpha}{\omega} +\labpoint{\omega+\alpha}.
			\end{align*} 
			Comparing it with the definition of $\cshuffle^T$, one has :
			\begin{align*}
				\labpoint{\omega} \cshuffle^T \labpoint{\alpha}=B_{+}^{\omega}\left(\emptyset \cshuffle^T \labpoint{\alpha}\right)+ B_{+}^{\alpha}\left(\labpoint{\omega}\cshuffle^T \emptyset \right) + B_{+}^{\omega+\alpha}\left(\emptyset \cshuffle^T \emptyset\right) = \sum_{\sigma\in\QSh(1,1)} \sigma(t,s).
			\end{align*}
			\item[Heredity:] let $k,l\in\N^*$ such that $k+l\geq 1$. Let us suppose for all couple of trees $(t',s')$ with $k'+1$ the number of nodes on the path from the root to the rightmost leaf of $t'$ and $l'+1$ the number of nodes on the path from the root to the leftmost leaf of $s'$ such that $k'+l'<k+l$, the description of the theorem is valid. Then :%
			\begingroup%
			\allowdisplaybreaks%
			\begin{align*}
				&t\cshuffle^T s=t< s+t> s+t\bullet s
				= \decombright{F_1}{F_2}{F_k}{\omega_1}{\omega_2}{\omega_{k}}{\omega_{k+1}} < \decombleft{F_{k+2}}{F_{k+3}}{F_{k+l+1}}{\omega_{k+2}}{\omega_{k+3}}{\omega_{k+l+1}}{\omega_{k+l+2}} \\
				+& \decombright{F_1}{F_2}{F_k}{\omega_1}{\omega_2}{\omega_{k}}{\omega_{k+1}} > \decombleft{F_{k+2}}{F_{k+3}}{F_{k+l+1}}{\omega_{k+2}}{\omega_{k+3}}{\omega_{k+l+1}}{\omega_{k+l+2}} \\
				+& \decombright{F_1}{F_2}{F_k}{\omega_1}{\omega_2}{\omega_{k}}{\omega_{k+1}} \bullet \decombleft{F_{k+2}}{F_{k+3}}{F_{k+l+1}}{\omega_{k+2}}{\omega_{k+3}}{\omega_{k+l+1}}{\omega_{k+l+2}} \\ 
				=& B_+^{\omega_1}\left(F_1\left( \decombright{F_2}{F_3}{F_k}{\omega_{2}}{\omega_3}{\omega_{k}}{\omega_{k+1}} \cshuffle^T s\right)\right) \\
				+& B_+^{\omega_{k+2}}\left(\left(t \cshuffle^T\decombleft{F_{k+3}}{F_{k+4}}{F_{k+l+1}}{\omega_{k+3}}{\omega_{k+4}}{\omega_{k+l+1}}{\omega_{k+l+2}}\right) F_{k+2}\right) \\
				+&B_+^{\omega_1+ \omega_{k+2}}\left( F_{1} \left(\decombright{F_2}{F_3}{F_k}{\omega_2}{\omega_3}{\omega_{k}}{\omega_{k+1}} \cshuffle^T \decombleft{F_{k+3}}{F_{k+4}}{F_{k+l+1}}{\omega_{k+3}}{\omega_{k+4}}{\omega_{k+l+1}}{\omega_{k+l+2}}\right) F_{k+2}\right).
			\end{align*}
			\endgroup%
			Thanks to our induction hypothesis one gets:
			\begingroup%
			\allowdisplaybreaks%
			\begin{align*}
				&t \cshuffle^T s 
				= B_+^{\omega_1} \left(\sum_{\tau\in\QSh(k,l+1)} F_1\tau\left(\decombright{F_2}{F_3}{F_k}{\omega_2}{\omega_3}{\omega_k}{\omega_{k+1}},s \right) \right) \\
				+&B_+^{\omega_{k+2}}\left( \sum_{\delta\in\QSh(k+1,l)} \delta\left(t, \decombleft{F_{k+3}}{F_{k+4}}{F_{k+l+1}}{\omega_{k+3}}{\omega_{k+4}}{\omega_{k+l+1}}{\omega_{k+l+2}} \right) F_{k+2} \right)   \\
				+& B_+^{\omega_1+\omega_{k+2}}\left( \sum_{\gamma\in\QSh(k,l)} F_1 \gamma\left(\decombright{F_2}{F_3}{F_k}{\omega_2}{\omega_3}{\omega_k}{\omega_{k+1}},\decombleft{F_{k+3}}{F_{k+4}}{F_{k+l+1}}{\omega_{k+3}}{\omega_{k+4}}{\omega_{k+l+1}}{\omega_{k+l+2}}\right) F_{k+2} \right) \\
				=&\sum_{\substack{\tau\in \QSh(k,l+1) \\ \sigma_1=(1,\tau(1)+1,\cdots,\tau(k+l+1)+1)}}\sigma_1(t,s) 
				+\sum_{\substack{\delta\in \QSh(k+1,l) \\ \sigma_2=(\delta(1)+1,\cdots,\delta(k+1)+1,1,\delta(k+2)+1,\cdots,\delta(k+l+1)+1)}}\sigma_2(t,s) \\
				+&\sum_{\substack{\gamma\in \QSh(k,l) \\ \sigma_3=(1,\gamma(1)+1,\cdots,\gamma(k)+1,1,\gamma(k+1)+1,\cdots,\gamma(k+l)+1)}}\sigma_3(t,s) \\
				=& \sum_{\sigma\in \QSh(k+1,l+1)} \sigma(t,s). 
			\end{align*}%
			\endgroup%
		\end{description}
		This ends the proof of the theorem.		
	\end{proof}
	With the same idea, one can deduce 
	\begin{Cor} \label{cor:products}
		With notations of theorem~\ref{thm:comb}, dendriform products of definition~\ref{defi:shuffle_trees} are:
		\begin{equation*}
			t < s = \sum_{\substack{\sigma\in\Sh(k+1,l+1) \\ \sigma^{-1}(\{1\})=\{1\}}} \sigma(t,s)\quad \text{and} \quad
			t > s = \sum_{\substack{\sigma\in\Sh(k+1,l+1) \\ \sigma^{-1}(\{1\})=\{k+2\}}} \sigma(t,s).
		\end{equation*}
		and respectively, for the tridendriform products of definition~\ref{defi:quasishuffle_trees}
		\begin{align*}
			t < s &= \sum_{\substack{\sigma\in\QSh(k+1,l+1) \\ \sigma^{-1}(\{1\})=\{1\}}} \sigma(t,s),\qquad
			t > s = \sum_{\substack{\sigma\in\QSh(k+1,l+1) \\ \sigma^{-1}(\{1\})=\{k+2\}}} \sigma(t,s). \\
			&\text{ \normalfont{ and }} \quad t \bullet s = \sum_{\substack{\sigma\in\QSh(k+1,l+1) \\ \sigma^{-1}(\{1\})=\{1,k+2\}}} \sigma(t,s).
		\end{align*}
	\end{Cor}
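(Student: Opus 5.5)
The plan is to reuse the proof of theorem~\ref{thm:comb} rather than start a new induction. That heredity step already splits $t\cshuffle^T s$ into the three contributions $t<s$, $t>s$ and $t\bullet s$. Each of these was rewritten as a sum over a family of quasi-shuffles: the families $\sigma_1$, $\sigma_2$ and $\sigma_3$ built from $\tau\in\QSh(k,l+1)$, $\delta\in\QSh(k+1,l)$ and $\gamma\in\QSh(k,l)$ respectively. So it is enough to identify each family with the subset of $\QSh(k+1,l+1)$ singled out by the condition on $\sigma^{-1}(\{1\})$.

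First I would treat $t<s$. By definition~\ref{defi:quasishuffle_trees}, it equals $B_+^{\omega_1}\bigl(F_1(t'\cshuffle^T s)\bigr)$, where $t'$ is the right comb of $t$ with its root and $F_1$ removed. The rightmost branch of $t'$ has $k$ nodes, so theorem~\ref{thm:comb} (now available in full, not only as an induction hypothesis) gives $t'\cshuffle^T s=\sum_{\tau\in\QSh(k,l+1)}\tau(t',s)$. Grafting under a new root decorated $\omega_1$, with $F_1$ attached as left son, gives exactly $\sigma_1(t,s)$, where $\sigma_1$ sends $1\mapsto 1$ and shifts the values of $\tau$ by one.

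The key combinatorial check is that $\tau\mapsto\sigma_1$ is a bijection from $\QSh(k,l+1)$ onto $\{\sigma\in\QSh(k+1,l+1):\sigma^{-1}(\{1\})=\{1\}\}$. Monotonicity and surjectivity are preserved in both directions. The inverse is restriction to $\IEM{2}{k+l+2}$ followed by subtracting $1$; it is well defined because $1$ has no other preimage. The same argument treats $t>s$, using $\delta\in\QSh(k+1,l)$ and the condition $\sigma^{-1}(\{1\})=\{k+2\}$. It also treats $t\bullet s$, using $\gamma\in\QSh(k,l)$ and the condition $\sigma^{-1}(\{1\})=\{1,k+2\}$. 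Since $\sigma(1)$ and $\sigma(k+2)$ are the minima of the two monotone blocks and $\sigma$ is surjective, $1\in\{\sigma(1),\sigma(k+2)\}$. Hence the three conditions exhaust $\QSh(k+1,l+1)$, which also recovers theorem~\ref{thm:comb}.

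The only delicate point is the degenerate cases: $k=0$ or $l=0$, and the empty-tree conventions of equation~\eqref{eq:conv_left_right}. For instance, when $t=\labpoint{\omega}$, $t<s=B_+^\omega(s)$ must match the single shuffle type with $\sigma(1)=1$ and everything else shifted. I would check these against the initialisation case of theorem~\ref{thm:comb}. For the dendriform products of definition~\ref{defi:shuffle_trees}, I would repeat the same argument with $\Sh$ in place of $\QSh$. In that case the middle family $\sigma_3$ is absent, because shuffles are injective and so $\sigma^{-1}(\{1\})$ is always a singleton.
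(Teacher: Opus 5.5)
Your proposal is correct and follows the paper's own reasoning. The paper reads the corollary off the heredity step of the proof of theorem~\ref{thm:comb}, where the terms $t<s$, $t>s$ and $t\bullet s$ give exactly the families $\sigma_1$, $\sigma_2$, $\sigma_3$, that is, the quasi-shuffles with $\sigma^{-1}(\{1\})$ equal to $\{1\}$, $\{k+2\}$ and $\{1,k+2\}$. Your explicit bijections with $\QSh(k,l+1)$, $\QSh(k+1,l)$ and $\QSh(k,l)$, together with the observation that $1\in\{\sigma(1),\sigma(k+2)\}$, simply spell out what the paper leaves implicit under ``with the same idea''.
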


\section{Arborified zeta values and (tri)dendriform products} \label{sec:azvs} 

\subsection{Arborified zeta values}

\AZVs{} were introduced in~\cite{Manchon_16,Ya20}. Like \MZVs{} they exist in a series and in an integral version.
\begin{defi}\label{def:azvs}
    Let $\calPTNconv$ be the subspace of $\calPTN$ spanned by trees whose roots are \emph{not} decorated by 1. We define the linear map $\zetaT:\calPTNconv \rightarrow  \R$ such that for any tree $t$ of $\calPTNconv$:
	\[
	\zetaT(t)\coloneqq\sum_{\boldsymbol{k}\in D_t} \prod_{v\in V(t)} \frac{1}{k_v^{n_v}},
	\]
	where $D_t$ is the set defined by:
	\[
	\left\{ (k_v)_{v\in V(t)}\in(\N^*)^{| V(t)|} \,\middle|\, \forall (v,u)\in  V(t)^2 , k_v<k_u \text{ if } u< v  \right\},
	\] 
    where for any $(u,v)\in V(t)^2$, $u<v$ stands for $u\leq v$ and $u\neq v$, with $u\leq v$ comes from reading the tree as a Hasse diagram where the root is the minimum of the poset.
	
	\medskip
	
	Now, let $\calPTxyconv$ be the subspace of $\calPTxy$ spanned by trees whose roots are decorated by $x$ and leaves are decorated by $y$. We define the linear map $\zetaTsh:\calPTxyconv \rightarrow  \R$ such that for any tree $t$ of $\calPTxyconv$:
    \[
    \zetaTsh(t)\coloneqq\int_{u\in\Delta_t} \prod_{v\in V(t)} g_{d_v}(t_v)\dt_v,
    \]
    where $\Delta_t\subseteq \interff{0 1}^{| V(t)|}$ stands for:
    \[ 
    \left\{ \left(t_{v_1},\dots, t_{v_{| V(t)|}}\right) \,\middle|\, \forall (i,j)\in\IEM{1}{| V(t)|}^2, t_{v_i}> t_{v_j} \text{ if } v_i < v_j  \right\},
    \] 
    where for any $(u,v)\in V(t)^2, u< v$ reading the tree as a Hasse diagram where the root is the minimum of the poset and 
    \begin{align*}
    	g_x:\left\lbrace\begin{array}{rcl}
    		\interoo{0 1} & \rightarrow & \R, \\
    		t & \mapsto & \frac{1}{t},  
    	\end{array} \right. &&
    	g_y:\left\lbrace\begin{array}{rcl}
    		\interoo{0 1} & \rightarrow & \R, \\
    		t & \mapsto & \frac{1}{1-t}.
    	\end{array} \right.
    \end{align*}
    
\end{defi}

\begin{Egs}
	We give some instances of computations for both zeta functions:
	\begin{multline*}
		\zeta^T\left(\labY{2}{2}{1}\right)= \sum_{\substack{0< k_2 < k_1 \\ 0< k_3 <k_1}} \frac{1}{k_1^2}\frac{1}{k_2^2}\frac{1}{k_3}, \quad
	\zetaTsh\left(\raisebox{-0.3\height}{\begin{tikzpicture}[line cap=round,line join=round,>=triangle 45,x=0.2cm,y=0.2cm, anchor=base, baseline]
		\draw (0,0) -- (0,1);
		\draw (0,1) -- (1,2);
		\draw (0,1) -- (-1,2);
		\draw (-1,2) -- (-1,3);
		\filldraw (0,0) circle (1pt);
		\filldraw (0,1) circle (1pt);
		\filldraw (1,2) circle (1pt);
		\filldraw (-1,2) circle (1pt);
		\filldraw (-1,3) circle (1pt);
		\draw (0,0) node[left] {\footnotesize{x}};
		\draw (0,1) node[left] {\footnotesize{x}};
		\draw (1,2) node[right] {\footnotesize{y}};
		\draw (-1,2) node[left] {\footnotesize{x}};
		\draw (-1,3) node[left] {\footnotesize{y}};
	\end{tikzpicture}}\right)= \int_{\substack{0<t_5<t_2<t_1<1 \\ 0<t_4<t_3<t_2}} \frac{1}{t_1t_2t_3(1-t_4)(1-t_5)} \\
	\quad \zetaTsh\left(\raisebox{-0.3\height}{\begin{tikzpicture}[line cap=round,line join=round,>=triangle 45,x=0.2cm,y=0.2cm, anchor=base, baseline]
		\draw (0,0) -- (0,4);
		\filldraw (0,0) circle (1pt);
		\filldraw (0,1) circle (1pt);
		\filldraw (0,2) circle (1pt);
		\filldraw (0,3) circle (1pt);
		\filldraw (0,4) circle (1pt);
		\draw (0,0) node[left] {\footnotesize{x}};
		\draw (0,1) node[left] {\footnotesize{x}};
		\draw (0,2) node[left] {\footnotesize{y}};
		\draw (0,3) node[left] {\footnotesize{x}};
		\draw (0,4) node[left] {\footnotesize{y}};
	\end{tikzpicture}}\right)= \int_{\substack{0<t_5<t_4<t_3 \\ t_3< t_2<t_1 <1}} \frac{1}{t_1t_2(1-t_3)t_4(1-t_5)},
	\quad \zeta^T\left(\raisebox{-0.3\height}{\begin{tikzpicture}[line cap=round,line join=round,>=triangle 45,x=0.2cm,y=0.2cm, anchor=base, baseline]
	\draw (0,0) -- (0,3);
	\filldraw (0,0) circle (1pt);
	\filldraw (0,1) circle (1pt);
	\filldraw (0,2) circle (1pt);
	\filldraw (0,3) circle (1pt);
	\draw (0,0) node[left] {\scriptsize{$2$}};
	\draw (0,1) node[left] {\scriptsize{$1$}};
	\draw (0,2) node[left] {\scriptsize{$3$}};
	\draw (0,3) node[left] {\scriptsize{$4$}};
	\end{tikzpicture}}\right) =  \sum_{0< k_4 < k_3 <k_2 <k_1} \frac{1}{k_1^2} \frac{1}{k_2} \frac{1}{k_3^3}\frac{1}{k_4^4}.
	\end{multline*}
\end{Egs}

We introduce $\iota:\Wcv \rightarrow \PTN$ a canonical embedding of words onto trees defined for any $w=w_1\dots w_k\in\Wcv$:

\begin{equation} \label{eq:iota_map}
	\iota\left( w_1\dots w_k \right)= \raisebox{-0.3\height}{\begin{tikzpicture}[line cap=round,line join=round,>=triangle 45,x=0.3cm,y=0.3cm, anchor=base, baseline]
	\draw (0,0) -- (0,1);
	\draw[dashed] (0,1) -- (0,2);
	\draw (0,2) -- (0,3);
	\filldraw (0,0) circle (1pt);
	\filldraw (0,1) circle (1pt);
	\filldraw (0,2) circle (1pt);
	\filldraw (0,3) circle (1pt);
	\draw (0,0) node[left] {\footnotesize{$w_1$}};
	\draw (0,1) node[left] {\footnotesize{$w_2$}};
	\draw (0,2) node[left] {\footnotesize{$w_{k-1}$}};
	\draw (0,3) node[left] {\footnotesize{$w_k$}};
\end{tikzpicture}} = B_+^{w_1}\circ B^{w_2}_+ \circ \ldots B_+^{w_{k-1}} \circ B_+^{w_k}(\emptyset).
\end{equation}

Doing so we have $\zeta = \zetaT \circ\iota.$
    Notice that these iterated series and integrals are indeed convergent for any trees in $\calPTNconv$ and $\calPTxyconv$ respectively. This was proved in the context of non-planar trees~\cite{clavier2020double} (and forests) but the planarity does not play any analytical role in the above definition.
    	
    Another result from~\cite{clavier2020double} that we will make an extensive use of requires some more definitions.
    \begin{defi}\label{defi:flat_map}
	We define the map $\flaten_1:\calPTN\rightarrow \W$ by $\flaten_1(\tdun{n})=(n)$ and for any ${\B_+^n(t_1\dots t_k)=t}$ with $n\in\N^*$ by:
	\[
	\flaten_1(t)=(n)\sqcup(\flaten_1(t_1)\cshuffle\cdots \cshuffle \flaten_1(t_k)).
	\]
	Further define the map $\flaten_0:\calPTxy\rightarrow \Wxy$ by $\flaten_0(\tdun{\epsilon})=(\epsilon)$ for $\epsilon\in\{x,y\}$ and for any $\B^\epsilon_+(t_1\dots t_k)=t$ with $\epsilon\in\{x,y\}$ by:
		\[
		\flaten_0(t)=(\epsilon)\sqcup(\flaten_0(t_1)\shuffle\cdots \shuffle \flaten_0(t_k)).
		\]
\end{defi}
    These flattening maps are called contracting and simple arborifications in~\cite{Ec92} as well as in \cite{Manchon_16}. The map $\flaten_0$ is called the Hairer-Kelly map in the context of rough paths theory, from the paper \cite{hairer2015geometric}. It is also related to the notion of topological ordering in data science (see \cite{kahn1962topological}).
    
    Notice that a similar definition of $\flaten_0$ can be made between $\calPTO$ and $\WO$ for any set $\Omega$. For such maps we do not indicate the dependance in $\Omega$ since their definition domain and image will always be explicitely stated.
    
    \medskip
    
    The next results relate \AZVs{} and \MZVs{} and will be used through this article.
    \begin{thm}[\cite{clavier2020double}] \label{thm:AZV_flaten}
     We have $\flaten_1\left(\calPTNconv\right)=\Wcv$ as well as $\flaten_0\left(\calPTxyconv\right)=\Wxycv$ and
     \begin{equation*}
      \zetaT=\zeta\circ\flaten_1,\qquad\zetaTsh=\zetash\circ\flaten_0.
     \end{equation*}
    \end{thm}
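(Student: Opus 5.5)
Although the statement is attributed to~\cite{clavier2020double} for non-planar trees, the planar version follows the same route, and I would argue directly on the definitions. There are two separate claims: the images of the convergent subspaces under the flattening maps, and the factorisations $\zetaT=\zeta\circ\flaten_1$ and $\zetaTsh=\zetash\circ\flaten_0$. In both cases I would induct on the number of vertices, using the recursive definitions of $\flaten_1$ and $\flaten_0$ (definition~\ref{defi:flat_map}).

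For the images, take $t=B_+^n(t_1\cdots t_k)$. Every word in the expansion of $\flaten_1(t)$ begins with the letter $n$, so $\flaten_1(\calPTNconv)\subseteq\Wcv$. Conversely, the chain map $\iota$ of equation~\eqref{eq:iota_map} satisfies $\flaten_1\circ\iota=\mathrm{id}$ on words, which gives surjectivity. For $\flaten_0$, the words in $\flaten_0(t)$ are exactly the linear extensions of the poset of $t$, read from the root upward. Each such word starts with the root decoration $x$ and ends with the decoration of a maximal vertex, that is, of a leaf, which is $y$. Surjectivity again comes from ladders.

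For the series identity, I would show by induction the finite, truncated statement: for every $N$, summing over $D_t$ restricted to $k_{\text{root}}<N$ gives the same result as the corresponding truncated sum for $\zeta$ of $\flaten_1(t)$, where truncation bounds the first (largest) index. Write $t=B_+^n(t_1\cdots t_k)$ and fix the root index $m$. The remaining sum factors as the product over $i$ of the truncated sums for the subtrees $t_i$ with bound $m$, since distinct subtrees impose no mutual constraints. By induction each factor equals the truncated sum of $\zeta$ on $\flaten_1(t_i)$. The key lemma is that the truncated iterated sums $\sum_{m>k_1>\cdots}$ are multiplicative for $\cshuffle$. This is the standard argument: split the product of two strictly decreasing chains according to the relative order of their largest elements ($<$, $>$, or $=$), which reproduces exactly the recursion~\eqref{eq:quasi-shuffle}. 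Prefixing the letter $n$ and summing over $m$ then gives the identity for $t$. Letting $N\to\infty$ is justified by monotone convergence, because all terms are positive and the limits are finite by convergence.

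The integral identity is analogous and in fact simpler. The simplex $\Delta_t$ decomposes, up to measure zero, into the disjoint union of the regions $t_{v_{\pi(1)}}>\cdots>t_{v_{\pi(n)}}$, one for each linear extension $\pi$ of the tree poset. This is exactly the expansion of $\flaten_0(t)$ as a sum of words, each with coefficient $1$, by the recursive shuffle description. Summing the integrals over these regions gives $\zetash\circ\flaten_0(t)$, and Tonelli's theorem applies since the integrand is positive. The main obstacle is bookkeeping rather than ideas. One must check, in the series case, that the truncated multiplicativity handles the diagonal term $m_i=m_j$ correctly, which is what produces the $(\omega+\omega')$ term of $\cshuffle$. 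One must also check that the coefficients of $\flaten_1(t)$, which may exceed $1$ because of repeated words, match the counting of the decomposition of the index domain.
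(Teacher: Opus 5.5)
Your proposal is correct. The paper does not prove this statement; it imports it from \cite{clavier2020double}. There, as the remark following theorem~\ref{thm:azv_shuffle_stuffle} recalls, \AZVs{} are built from arborified Rota--Baxter operators via the universal property of rooted trees. The factorisations through $\flaten_1$ and $\flaten_0$ then come from the Rota--Baxter identities of weight $1$ (summation) and $0$ (integration).

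Your series argument is this mechanism written out by hand. The truncated sum $N\mapsto\zeta^T_N(t)$ (root index $<N$) is the image of $t$ under the operated-algebra morphism determined by the pointwise product and the operators $f\mapsto\sum_{m<N}m^{-n}f(m)$. Your key lemma (truncated sums are multiplicative for $\cshuffle$, with the diagonal producing the contracted letter) is the iterated weight-one Rota--Baxter identity.

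For the integrals you take a genuinely different route. Instead of running the same induction with the weight-zero identity for $f\mapsto\int_0^s f$, you cut $\Delta_t$ into simplices indexed by linear extensions. You then use that $\flaten_0(t)$ is the sum of the words of the linear extensions of $t$, i.e. its topological orderings. The Rota--Baxter route treats both cases uniformly and applies verbatim to any Rota--Baxter operator. Your decomposition is more explicit and geometric. Both of your arguments also yield convergence of the tree series and integrals as a by-product: a monotone bounded sequence in one case, a finite sum of convergent \MZVs{} in the other. The paper otherwise quotes this convergence from \cite{clavier2020double}.

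A few minor points.
\begin{itemize}
\item The truncated identity must be proved for all trees of $\PTN$, not only convergent ones, because subtrees of a convergent tree may have roots decorated by $1$. Your formulation allows this.
\item Your closing worry about coefficients of $\flaten_1(t)$ larger than $1$ does not arise in the inductive argument, since the multiplicativity lemma absorbs it. It would only matter in a direct decomposition of $D_t$ by strictly order-preserving surjections $V(t)\twoheadrightarrow\IEM{1}{p}$, the series analogue of your integral argument.
\item Since $\PTN$ consists of non-empty trees, the empty word in $\Wcv$ and $\Wxycv$ is reached only with the convention $\flaten_1(\emptyset)=\flaten_0(\emptyset)=\emptyset$ used elsewhere in the paper. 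This is an imprecision of the statement, not of your argument.
\end{itemize}
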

    
\subsection{Arborified zeta values and the (tri)dendriform (quasi-)shuffle}


We start with a simple yet important lemma.
\begin{Lemme} \label{lemma:flat_dend_morph}
 Let $\Omega$ be a set (resp. $(\Omega,+)$ a commutative semigroup). The map 
 \begin{equation*}
  \flaten_0:(\calPTO,<,>)\longrightarrow(\WO,\prec,\succ)\qquad(\text{\normalfont{ resp. } }\flaten_1:(\calPTO,<,>,\bullet)\longrightarrow(\WO,\prec,\succ,\cdot))
 \end{equation*}
is a morphism of dendriform algebras (resp. of tridendriform algebras).
\end{Lemme}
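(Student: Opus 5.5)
We argue by induction on $|S|+|T|$, and treat the tridendriform case for $\flaten_1$; the dendriform case for $\flaten_0$ is the same argument with the middle product set to zero and $\cshuffle$ replaced by $\shuffle$. The tools are the recursive definition of $\flaten_1$, the inductive definitions of $<,>,\bullet$ (definition~\ref{defi:quasishuffle_trees}), and the inductive definitions of $\prec,\succ,\cdot$ on words. We also use that $(\WO,\cshuffle)$ is commutative and associative, which holds since $(\Omega,+)$ is a commutative semigroup. It is convenient to extend $\flaten_1$ to $\overline{\calPTO}$ by $\flaten_1(\emptyset)=\emptyset$ and to forests multiplicatively, $\flaten_1(t_1\cdots t_k)\coloneqq\flaten_1(t_1)\cshuffle\cdots\cshuffle\flaten_1(t_k)$. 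Then $\flaten_1(B_+^x(F))=(x)\sqcup\flaten_1(F)$ for every forest $F$, the empty forest included.

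\textbf{Step 1.} We first show, by simultaneous induction on $|S|+|T|$, that the three identities
\[
\flaten_1(S<T)=\flaten_1(S)\prec\flaten_1(T),\quad \flaten_1(S>T)=\flaten_1(S)\succ\flaten_1(T),\quad \flaten_1(S\bullet T)=\flaten_1(S)\cdot\flaten_1(T)
\]
hold. Summing them gives $\flaten_1(S\cshuffle^T T)=\flaten_1(S)\cshuffle\flaten_1(T)$, and the conventions for $\emptyset$ make this also true when one argument is empty. This summed identity at smaller total size is the induction hypothesis used inside the inductive steps.

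\textbf{Step 2: the left product.} Write $S=B_+^x(S_1\cdots S_k)$. Then
\[
\flaten_1(S<T)=(x)\sqcup\big(\flaten_1(S_1)\cshuffle\cdots\cshuffle\flaten_1(S_k\cshuffle^T T)\big).
\]
By the induction hypothesis $\flaten_1(S_k\cshuffle^T T)=\flaten_1(S_k)\cshuffle\flaten_1(T)$, so this equals $(x)\sqcup\big(\flaten_1(S_1\cdots S_k)\cshuffle\flaten_1(T)\big)$ by associativity. Since $\flaten_1(S)=(x)\sqcup\flaten_1(S_1\cdots S_k)$, this is exactly $\flaten_1(S)\prec\flaten_1(T)$. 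The first letter of $\flaten_1(S)$ is $x$ and its tail is $\flaten_1(S_1\cdots S_k)$, which may be a linear combination of words. Because $\prec$ on words depends only on the first letter and is linear in the tail, this is harmless.

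\textbf{Step 3: the right and middle products.} For $T=B_+^y(T_1\cdots T_\ell)$ we get
\[
\flaten_1(S>T)=(y)\sqcup\big((\flaten_1(S)\cshuffle\flaten_1(T_1))\cshuffle\cdots\big)=(y)\sqcup\big(\flaten_1(S)\cshuffle\flaten_1(T_1\cdots T_\ell)\big)=\flaten_1(S)\succ\flaten_1(T).
\]
For the middle product,
\[
\flaten_1(S\bullet T)=(x+y)\sqcup\big(\flaten_1(S_1\cdots S_k)\cshuffle\flaten_1(T_1\cdots T_\ell)\big)=\flaten_1(S)\cdot\flaten_1(T),
\]
using the induction hypothesis on $S_k\cshuffle^T T_1$ and then associativity.

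\textbf{Main obstacle.} The main point of care is the bookkeeping of the words: each tree-level product merges a subtree into the last child of $S$ or the first child of $T$, whereas the word-level product shuffles against the whole tail. Commutativity is not actually needed, only associativity of $\cshuffle$, since the positions line up ($S$'s children come first and $T$'s children last). Commutativity is nonetheless available if a reordering turns out to be needed. The degenerate cases $k=0$ or $\ell=0$ must also be checked against the empty-tree conventions. The base case with two single vertices is immediate: both sides give $(x,y)$, $(y,x)$ and $(x+y)$ respectively.
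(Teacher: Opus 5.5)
Your proposal is correct and follows essentially the paper's proof. Both argue by induction on $|S|+|T|$: unfold the recursive definitions of $<,>,\bullet$ and of $\flaten_1$ (resp.\ $\flaten_0$), apply the induction hypothesis to $S_k\cshuffle^T T$, $S\cshuffle^T T_1$ and $S_k\cshuffle^T T_1$, and conclude by associativity of $\cshuffle$ (resp.\ $\shuffle$). Your simultaneous induction over all three products and your extension of $\flaten_1$ to forests (which absorbs the $k=0$, $\ell=0$ cases) are slightly cleaner bookkeeping than the paper's separate inductions and case splits, and you are right that only associativity, not commutativity, of $\cshuffle$ is actually used.
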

\begin{proof}
 We want to prove that for any pair of trees $(S,T)$ we have ${\flaten_0(S<T)=\flaten_0(S)\prec\flaten_0(T)}$ and ${\flaten_0(S>T)=\flaten_0(S)\succ\flaten_0(T)}$. We do this by induction on $|S|+|T|$.
 
 If $|S|+|T|=2$, then there are two elements $x$ and $y$ of $\Omega$ such that $S=\tdun{x}$ and $T=\tdun{y}$. Then by definition of $<$, $\flaten_0$, and $\prec$ we have
 \begin{equation*}
  \flaten_0(S<T)=\flaten_0\left(\tddeux{x}{y}\right)=xy=(x)\prec (y)=\flaten_0(\tdun{x})\prec\flaten_0(\tdun{y}).
 \end{equation*}
 The same computation gives also $\flaten_0(S>T)=\flaten_0(\tdun{x})\succ\flaten_0(\tdun{y})$.
 
 Now assume that for some $N\geq2$, the lemma holds for any pair of planar rooted trees $(S,T)$ with $|S|+|T|\leq N$. So, in particular $\flaten_0(S\shuffle^T T)=\flaten_0(S)\shuffle\flaten_0(T)$ for any such pair $(S,T)$.
 
 Let $(S,T)$ be a pair of trees such that $|S|+|T|=N+1$. As before, write $S=B_+^x(S_1\cdots S_k)$ and $T=B_+^y(T_1\cdots T_\ell)$ for some $x$ and $y$ in $\Omega$, $k$ and $\ell$ greater or equal to zero and trees $S_i$ and $T_j$. This time, in order to not introduce non-standard notations for words, we will distinguish the case $k=0$ and $\ell=0$ from the general case.
 \begin{itemize}
  \item If $k=0$ then by definition of $<$, $\flaten_0$, and $\prec$:
  \begin{align*}
   \flaten_0(S<T) & =\flaten_0(\tdun{x}<T)=\flaten_0(B_+^x(T))=(x)\sqcup\flaten_0(T)=(x)\prec\flaten_0(T) \\
   & =\flaten_0(S)\prec\flaten_0(T).
  \end{align*}
  \item If $k\geq1$ then on the one hand
  \begin{align*}
   \flaten_0(S<T) & = \flaten_0\left(B_+^x(S_1\cdots(S_k\shuffle^T T))\right) \tag{by definition of <} \\
   & = (x)\sqcup(\flaten_0(S_1)\shuffle\cdots\shuffle\flaten_0(S_k\shuffle^T T))\tag{by definition of $\flaten_0$} \\
   & = (x)\sqcup(\flaten_0(S_1)\shuffle\cdots\shuffle\flaten_0(S_k)\shuffle \flaten_0(T))
  \end{align*}
  by the induction hypothesis and associativity of $\shuffle$.
  
  On the other hand we also have by definition of $\flaten_0$
  \begin{align*}
   \flaten_0(S)\prec\flaten_0(T) & = \left((x)\sqcup\flaten_0(S_1)\shuffle\cdots\shuffle\flaten_0(S_k)\right)\prec\flaten_0(T) \\
   & = (x)\sqcup(\flaten_0(S_1)\shuffle\cdots\shuffle\flaten_0(S_k)\shuffle \flaten_0(T))
  \end{align*}
  by definition of $\prec$ and associativity of $\shuffle$.
 \end{itemize}
 Thus, by induction $\flaten_0(S<T)=\flaten_0(S)\prec\flaten_0(T)$ for any pair of trees.
 
 For the second relation, we again need to distinguish two cases.
 \begin{itemize}
  \item If $\ell=0$ then by definition of $>$, $\flaten_0$, and $\succ$:
  \begin{align*}
   \flaten_0(S>T) & =\flaten_0(S>\tdun{y})=\flaten_0(B_+^y(S))=(y)\sqcup\flaten_0(S)=\flaten_0(S)\succ (y) \\
   & = \flaten_0(S)\succ \flaten_0(T).
  \end{align*}
  \item If $\ell\geq1$ then on the one hand
  \begin{align*}
   \flaten_0(S>T) & = \flaten_0\left(B_+^y((S\shuffle^T T_1)\cdots T_\ell)\right) \tag{by definition of >} \\
   & = (y)\sqcup(\flaten_0(S\shuffle^T T_1)\shuffle\cdots\shuffle\flaten_0(T_\ell))\tag{by definition of $\flaten_0$} \\
   & = (y)\sqcup(\flaten_0(S)\shuffle\flaten_0(T_1)\shuffle\cdots\shuffle\flaten_0(T_\ell))
  \end{align*}
  by the induction hypothesis and associativity of $\shuffle$.

 And on the other hand
  \begin{align*}
   \flaten_0(S)\succ\flaten_0(T) & = \flaten_0(S)\succ\left((y)\sqcup\flaten_0(T_1)\shuffle\cdots\shuffle\flaten_0(T_\ell)\right)\tag{by definition of $\flaten_0$} \\
   & = (y)\sqcup(\flaten_0(S)\shuffle\flaten_0(T_1)\shuffle\cdots\shuffle\flaten_0(T_\ell)).
  \end{align*}
  by definition of $\succ$ and associativity of $\shuffle$.
\end{itemize}
  Thus, by induction we also have $\flaten_0(S>T)=\flaten_0(S)\succ\flaten_0(T)$ for any pair of trees. \\
  
  For the tridendriform case, exactly the same proof shows that
  \begin{equation*}
   \flaten_1(S>T)=\flaten_1(S)\succ\flaten_1(T),\quad\text{and}\quad\flaten_1(S<T)=\flaten_1(S)\prec\flaten_1(T).
  \end{equation*}
  We prove by induction of $|S|+|T|$ that $\flaten_1(S\bullet T)=\flaten_1(S)\cdot\flaten_1(T)$.
  
  If $|S|+|T|=2$, then $S=\tdun{x}$ and $T=\tdun{y}$ for some $(x,y)\in\Omega^2$. We then have
   \begin{equation*}
    \flaten_1(S\bullet T)=\flaten_1(\tdun{x+y}\hspace{0.6cm})=(x+y)=(x)\cdot (y)=\flaten_1(S)\cdot\flaten_1(T)
   \end{equation*}
   by definition of $\bullet$, of $\flaten_1$, and of $\cdot$. Notice that in the computation above, $+$ is the commutative law of $\Omega$, and $x+y$ in the second to last term is one word of length one.
   
   Assume now that the result holds for any pair of trees $(S,T)$ such that $|S|+|T|\leq N$ for some $N\geq2$. Notice that this implies that $\flaten_1(S\cshuffle^T T)=\flaten_1(S)\cshuffle\flaten_1(T)$ for any such pair $(S,T)\in\calPTO$. Then let $(S,T)$ be any pair of planar rooted trees such that $|S|+|T|=N+1$. Then we can write $S=B_+^x(S_1\cdots S_k)$ and $T=B_+^y(T_1\cdots T_\ell)$ for some $x$ and $y$ in $\Omega$, $k$ and $\ell$ greater or equal to zero and planar rooted trees $S_i$ and $T_j$. Then we have on the one hand
   \begin{align*}
    \flaten_1(S\bullet T) & = \flaten_1(B_+^{x+y}(S_1\cdots(S_k\cshuffle^T T_1)\cdots T_\ell)) \quad\text{(by definition of $\bullet$)}, \\
    & = (x+y)\sqcup\left(\flaten_1(S_1)\cshuffle\cdots\cshuffle\flaten_1(S_k\cshuffle^T T_1)\cshuffle\cdots\cshuffle\flaten_1(T_\ell)\right)\quad\text{(by definition of $\flaten_1$)},\\
    & = (x+y)\sqcup\left(\flaten_1(S_1)\cshuffle\cdots\cshuffle\flaten_1(S_k)\cshuffle\flaten_1(T_1)\cshuffle\cdots\cshuffle\flaten_1(T_\ell)\right)
   \end{align*}
   by the induction hypothesis and associativity of $\cshuffle$. And on the other hand, by definition of $\flaten_1$:
   \begin{align*}
    \flaten_1(S)\cdot\flaten_1(T) & = \left((x)\sqcup\flaten_1(S_1)\cshuffle\cdots\cshuffle\flaten_1(S_k)\right)\cdot\left((y)\sqcup\flaten_1(T_1)\cshuffle\cdots\cshuffle\flaten_1(T_\ell)\right), \\
    & = (x+y)\sqcup(\flaten_1(S_1)\cshuffle\cdots\cshuffle\flaten_1(S_k)\cshuffle\flaten_1(T_1)\cshuffle\cdots\cshuffle\flaten_1(T_\ell))
   \end{align*}
   by definition of $\cdot$ and associativity of $\cshuffle$. Notice that in this case we do not need to separate the cases $k=0$ and $\ell=0$ by using the aformentioned convention $\emptyset\cshuffle^T T=T\cshuffle^T\emptyset=T$ and $\flaten_1(\emptyset)=\emptyset$. Thus we have proven the lemma by induction.
\end{proof}
Notice this lemma implies for any trees $S$ and $T$ (resp. for $(\Omega,+)$ a commutative semigroup) $\flaten_0(S\shuffle^T T)=\flaten_0(S)\shuffle\flaten_0(T)$ (resp. ${\flaten_1(S\cshuffle^T T)=\flaten_1(S)\cshuffle\flaten_1(T)}$. Now we can prove the main result of this subsection:
\begin{thm} \label{thm:azv_shuffle_stuffle}
 The map $\zetaTsh:\calPTxyconv\longrightarrow\R$ is an algebra morphism for the dendriform shuffle product $\shuffle^T$ and the map $\zetaT:\calPTNconv\longrightarrow\R$ is an algebra morphism for the tridendriform quasi-shuffle product $\cshuffle^T$.
\end{thm}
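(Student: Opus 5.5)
The strategy is to reduce the statement to the classical case of words by way of the flattening maps. Theorem~\ref{thm:AZV_flaten} gives the factorisations $\zetaTsh=\zetash\circ\flaten_0$ on $\calPTxyconv$ and $\zetaT=\zeta\circ\flaten_1$ on $\calPTNconv$. Lemma~\ref{lemma:flat_dend_morph}, and the remark following it, give $\flaten_0(S\shuffle^T T)=\flaten_0(S)\shuffle\flaten_0(T)$ for any set $\Omega$. For a commutative semigroup they also give $\flaten_1(S\cshuffle^T T)=\flaten_1(S)\cshuffle\flaten_1(T)$; here we apply this with $\Omega=(\N^*,+)$, which is commutative, and with $\Omega=\{x,y\}$ viewed as a set. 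Once these are combined with the classical morphism properties~\eqref{eq:shuffle_stuffle_zeta}, the computation for $S,T\in\calPTxyconv$ is
\begin{equation*}
\zetaTsh(S\shuffle^T T)=\zetash\big(\flaten_0(S)\shuffle\flaten_0(T)\big)=\zetash(\flaten_0(S))\,\zetash(\flaten_0(T))=\zetaTsh(S)\,\zetaTsh(T),
\end{equation*}
and the same chain works for $\zetaT$ with $\cshuffle^T$, $\flaten_1$, $\zeta$ and $\cshuffle$. Everything then extends by bilinearity.

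The main work, and the one real obstacle, is checking that each step is applied within the correct domain. First, $\calPTxyconv$ must be closed under $\shuffle^T$, and $\calPTNconv$ must be closed under $\cshuffle^T$. Otherwise $\zetaTsh(S\shuffle^T T)$ is not even defined, and we cannot invoke Theorem~\ref{thm:AZV_flaten} on the product. I would prove closure using the combinatorial description of Theorem~\ref{thm:comb}.

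For the shuffle, every tree $\sigma(t,s)$ with $\sigma\in\Sh(k+1,l+1)$ has as root node $1$ of the ladder, and this node carries either $\omega_1$ (the root decoration of $t$) or $\omega_{k+2}$ (the root decoration of $s$). Both of these equal $x$. Every vertex of $\sigma(t,s)$ comes from a vertex of $t$ or of $s$. Such a vertex is a leaf of $\sigma(t,s)$ exactly when it was a leaf in its original tree. The only non-obvious cases are the two tops $\omega_{k+1}$ and $\omega_{k+l+2}$ of the spines. Each of these is a leaf in its original tree, so it carries $y$. Being a leaf in the new tree only happens when it becomes the top node $n$ of the ladder, and there it still carries $y$. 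Every other ladder node has a ladder child above it. Hence all leaves carry $y$ and the root carries $x$.

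For the quasi-shuffle, the root of $\sigma(t,s)$ is decorated by $\omega_1$, by $\omega_{k+2}$, or by $\omega_1+\omega_{k+2}$. Each of these is at least $2$ whenever the root decorations of $t$ and $s$ are both different from $1$, so the root is not decorated by $1$. This closure argument could alternatively be done inductively from Definitions~\ref{defi:shuffle_trees} and~\ref{defi:quasishuffle_trees}, but the comb description makes it immediate.

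Second, the classical identities~\eqref{eq:shuffle_stuffle_zeta} require their arguments to lie in $\Wcv$ and $\Wxycv$. This is exactly what Theorem~\ref{thm:AZV_flaten} provides, through $\flaten_1(\calPTNconv)=\Wcv$ and $\flaten_0(\calPTxyconv)=\Wxycv$. With these two points checked, the displayed chain of equalities completes the proof for both maps.
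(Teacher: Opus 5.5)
Your proposal is correct and follows the paper's proof: you factor $\zetaTsh$ and $\zetaT$ through $\flaten_0$ and $\flaten_1$ (Theorem~\ref{thm:AZV_flaten}), pass the products to words with Lemma~\ref{lemma:flat_dend_morph}, and conclude with~\eqref{eq:shuffle_stuffle_zeta}. Your extra check via Theorem~\ref{thm:comb}, that $\calPTxyconv$ and $\calPTNconv$ are closed under $\shuffle^T$ and $\cshuffle^T$, is sound and covers a point the paper leaves implicit; note that only the direction ``a leaf of $\sigma(t,s)$ was a leaf of $t$ or $s$'' holds and is needed (not ``exactly when''), and that is the direction you actually verify.
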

\begin{proof}
 For any convergent trees $S$ and $T$ we have
 \begin{align*}
  \zetaTsh(S\shuffle^T T) & = \zetash\circ\flaten_0(S\shuffle^T T) \quad\text{by theorem }\ref{thm:AZV_flaten} \\
  & = \zetash\left(\flaten_0(S)\shuffle\flaten_0(T)\right) \quad\text{by lemma }\ref{lemma:flat_dend_morph} \\
  & = \zetash\circ\flaten_0(S)\cdot\zetash\circ\flaten_0(T) \quad\text{by equation }\eqref{eq:shuffle_stuffle_zeta} \\
  & = \zetaTsh(S)\zetaTsh(T) \quad\text{by theorem }\ref{thm:AZV_flaten}.
 \end{align*}
 Exactly the same argument implies that $\zetaT$ is an algebra morphism for $\cshuffle^T$.
\end{proof}
	\begin{Rq}
     Notice that $\zetash$ and $\zetaT$ can be defined via arborification of Rota-Baxter maps, see the second author's work~\cite{clavier2020double} for details. Using these methods and those of~\cite{clavier2020locality}, one could prove theorem~\ref{thm:azv_shuffle_stuffle} without reference to flattening maps. We do not chose this solution to avoid the introduction of new structures and postpone to future work the detail study of the links between arborification of Rota-Baxter maps and (tri)dendriform structures on planar rooted trees.
    \end{Rq}


\section{Arborified integral representation and Hoffman relation} \label{sec:Hoffman}

\subsection{Process trees}

We start by recalling the construction of the third author~\cite{Fan25}.
\begin{defi} \label{def:min_incomp_pairs}
  Let $T\in\calPTN$ be a non-empty planar tree which is not a ladder tree. Then we can uniquely write 
  \begin{equation*}
   T=B_+^{n_1}\circ\cdots\circ B_+^{n_p}(T_1\cdots T_k)
  \end{equation*}
  for some non-empty planar trees $T_1,\cdots,T_k$ and $k\geq2$. Let $a$ be the root of $T_1$ and $b$ be the root of $T_2$. Then we call $(a,b)$ the \emph{minimal incomparable pair} of $T$.
  And so $T$ can be written
  \begin{equation*}
   T=B_+^{n_1}\circ\cdots\circ B_+^{n_p}(B_+^{n_a}(F_a)B_+^{n_b}(F_b)F)
  \end{equation*}
  where $F_a$, $F_b$ and $F$ are planar forests, eventually empty. Then we set as in~\cite{Fan25}
  \begin{align*}
   T_b^a & := B_+^{n_1}\circ\cdots\circ B_+^{n_p}\left(B_+^{n_a}(B_+^{n_b}(F_b)F_a)F\right), \\
   T_a^b & := B_+^{n_1}\circ\cdots\circ B_+^{n_p}\left(B_+^{n_b}(B_+^{n_a}(F_a)F_b)F\right), \\
   T_{a+b} & :=B_+^{n_1}\circ\cdots\circ B_+^{n_p}\left(B_+^{n_a+n_b}(F_aF_b)F\right).
  \end{align*}
  \end{defi}
  
 

Let us define maps that will allow us to iteratively define the process tree.
 \begin{defi}
     Let $f_1,f_2,f_3:\calPTN\longrightarrow\calPTN$ be three linear maps defined by induction with
     \begin{equation*}
         f_1(\tdun{n})=f_2(\tdun{n})=f_3(\tdun{n})=\tdun{n}
     \end{equation*}
     for all $n\in\N^*$, and for $T=B_+^n(T_1\cdots T_k)$ with $T_i=B_+^{a_i}(F_i)$:
     \begin{gather*}
         f_1(T)  = \begin{cases} 
         B_+^n(f_1(T_1)) &\text{if }k=1 \\
         B_+^n\left(B_+^{a_1}(T_2F_1)\right) &\text{if } k=2 \\ 
         B_+^n\left(B_+^{a_1}(T_2F_1)T_3\cdots T_k\right)&\text{if }k\geq3
         \end{cases}, \quad
         f_2(T)  = \begin{cases} 
         B_+^n(f_2(T_1)) &\text{if }k=1 \\
         B_+^n\left(B_+^{a_2}(T_1F_2)\right) &\text{if }k=2 \\
         B_+^n\left(B_+^{a_2}(T_1F_2)T_3\cdots T_k\right)&\text{if }k\geq3
         \end{cases}, \\
         f_3(T)  =  \begin{cases} 
         B_+^n(f_3(T_1))&\text{if }k=1 \\
         B_+^n\left(B_+^{a_1+a_2}(F_1F_2) \right) &\text{if } k=2 \\
         B_+^n\left(B_+^{a_1+a_2}(F_1F_2)T_3\cdots T_k\right)&\text{if }k\geq3.
         \end{cases}
     \end{gather*}
     We extend $f_i$ by linearity to a map of $\calPTN$.
 \end{defi}

\begin{Lemme} \label{lem:finite_lin_number}
  For every tree $T\in\calPTN$, there exists $M\in\N$ such that,
  for every $m\geq M$ and every $g_1,\ldots,g_m\in \{f_1,f_2,f_3\}$, the tree $g_1\circ\cdots\circ g_m(T)$ is a ladder tree.
\end{Lemme}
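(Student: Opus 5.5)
We will exhibit an integer-valued potential on planar trees which each $f_i$ strictly decreases on non-ladder trees and leaves unchanged on ladders. Then $M$ equal to the potential of $T$ will do. Note first that each $f_i$ sends a tree to a single tree (not a linear combination), so $g_1\circ\cdots\circ g_m(T)$ is again a tree. Also, each $f_i$ fixes ladder trees: by the case $k=1$, $f_i(B_+^n(T_1))=B_+^n(f_i(T_1))$, and by induction this reaches the single vertex $\tdun{n}$, which is fixed.

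For the potential, take
\[
\Phi(T)\coloneqq\sum_{v\in V(T)}\bigl(|V(T)|-1-\mathrm{depth}(v)\bigr)=|V(T)|\,(|V(T)|-1)-\sum_{v}\mathrm{depth}(v),
\]
where $\mathrm{depth}$ is the distance to the root. Each term is nonnegative, since a vertex has depth at most $|V(T)|-1$. Equality $\Phi(T)=0$ would force every vertex to have maximal depth, which is impossible unless $|V(T)|=1$. So instead use the simpler quantity $\Psi(T)\coloneqq|V(T)|^2-\sum_v \mathrm{depth}(v)\ge 0$. It suffices to show that for a non-ladder $T$, $f_i(T)$ either has fewer vertices, or has the same number of vertices and strictly larger total depth. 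Since $\Psi$ is a nonnegative integer that strictly decreases along these steps, and ladders are fixed points, after at most $\Psi(T)$ non-trivial steps we reach a ladder. Any composition of length $m\ge M\coloneqq\Psi(T)$ then yields a ladder, because applying any $f_i$ to a non-ladder is a non-trivial step.

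The key verification, done by induction on the length of the initial ladder segment, is as follows. For non-ladder $T=B_+^n(T_1)$ we have $f_i(T)=B_+^n(f_i(T_1))$ with $T_1$ non-ladder. The vertex count changes as for $T_1$, and the depths shift uniformly by one, so the comparison transfers from $T_1$. When $k\ge2$ at the root:
\begin{itemize}
\item For $f_1$, the subtree $T_2$ is regrafted onto the root $a_1$ of $T_1$. All vertices of $T_2$ gain depth $1$, nothing else moves, and the vertex count is unchanged, so the total depth strictly increases.
\item For $f_2$, symmetrically, all vertices of $T_1$ gain depth $1$.
\item For $f_3$, the two vertices $a_1,a_2$ merge into one, so $|V|$ drops by one. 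Then $\Psi$ decreases by at least $2|V|-1$ minus the lost depth of one vertex at depth $1$, which is positive. More simply, order $\Psi$ lexicographically by $(|V|,-\sum\mathrm{depth})$.
\end{itemize}

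The only slightly delicate point is the lexicographic well-foundedness. For fixed $|V|$, the total depth is bounded above by $\binom{|V|}{2}$, so the pair $(|V|,\binom{|V|}{2}-\sum\mathrm{depth})$ lives in $\N^2$ and decreases lexicographically at each non-trivial step. To get a uniform $M$, note that the vertex count can drop at most $|V(T)|-1$ times, and between drops there are at most $\binom{|V(T)|}{2}$ depth-increasing steps. Hence $M=|V(T)|\bigl(\binom{|V(T)|}{2}+1\bigr)$ works.
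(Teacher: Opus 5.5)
Your proof is correct, but it takes a different route from the paper.

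\textbf{The paper's route.} The paper argues by induction on $|V(T)|$. When the root has $k\ge 2$ children, each $f_i$ strictly lowers that number, so after at most $k$ applications the root has a single child whose subtree has fewer vertices than $T$. Only finitely many such subtrees can arise, so one takes the maximum of the $M$'s given by the induction hypothesis and adds $k$. This proves that $M$ exists but never quantifies it.

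\textbf{Your route.} You use a termination measure instead. On a non-ladder tree, $f_1$ and $f_2$ keep $|V|$ and push a whole subtree one level deeper, while $f_3$ deletes one vertex. Hence the pair $\bigl(|V|,\binom{|V|}{2}-\sum_v\mathrm{depth}(v)\bigr)$ decreases lexicographically at every such step, and ladders are fixed points. This yields an explicit uniform bound $M=|V(T)|\bigl(\binom{|V(T)|}{2}+1\bigr)$, and therefore an explicit upper bound on $\lin(T)$. It also avoids the maximum over the finitely many intermediate subtrees that the nested induction needs.

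\textbf{Two points to tidy.}
\begin{itemize}
\item Delete the detour through $\Phi$; it plays no role.
\item Your sentence ``it suffices to show that $f_i(T)$ has fewer vertices, or \dots'' does not by itself make $\Psi=|V|^2-\sum_v\mathrm{depth}(v)$ decrease, since a drop in $|V|$ could a priori be outweighed by a large drop in total depth. For $f_3$ the decrease does hold even after an initial ladder segment. The merged vertex lies at some depth $d\le |V(T)|-2$, so $\Psi$ drops by $2|V(T)|-1-d\ge |V(T)|+1$. Your $f_3$ bullet only checks the case $d=1$. The lexicographic version you end with needs no such check, so you can simply present that one.
\end{itemize}
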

 \begin{proof}
     We prove the result by induction on $|V(T)|$. If $|V(T)|=1$ the result trivially holds. Assume that there exists $N\in\N^*$ such that the result holds for all trees with $|V(T)|\leq N$. Let $T$ be a tree with $N+1$ vertices and let $k$ be the number of direct descendants of its root. Then we write $T=B_+^n(T_1\cdots T_k)$ where for $i\in\IEM{1}{k}$, $T_i=B_+^{a_i}(F_i)$ with $a_i\in\N^*$ and $F_i$ is a forest. 

     \begin{itemize}
     	\item If $k=1$ then for any $j\in\IEM{1}{3}, f_j(T)=B_+^n(f_j(T_1))$ . Applying the induction hypothesis on $T_1$, there exists $M\in\N$ such that, for every $m\geq M$ and every $g_1,\ldots,g_m\in \{f_1,f_2,f_3\}$, $g_1\circ\cdots\circ g_m(T_1)$ is a ladder tree. Then, as $g_1\circ\cdots\circ g_m(T)=B_+^n(g_1\circ\cdots\circ g_m(T_1))$ which is a ladder tree, the lemma holds in this case. 
     	
     	
     	\item If $k \geq 2$, for any $i \in \{1,2,3\}$ note that $f_i(T)$ is a tree whose root has strictly less direct descendants than the one of $T$. Consequently, for any $g_1, \ldots, g_k \in \{f_1,f_2,f_3\}$ we have $g_1 \circ \cdots \circ g_k(T) = B_+^n \circ B_+^r(\widetilde{T})$ for some $r \in \N^*$ depending on the $a_i$'s and $g_i$'s, and $\widetilde{T} \in \calPTN$ with $|\widetilde{T}| < |T|$ depending of the $T_i$'s and $g_i$'s. Since there are only finitely many choices of $g_1, ..., g_k$, there are only finitely many possible $\widetilde{T}$. By induction, for each possible $\widetilde{T}$ there exists $M_{\widetilde{T}}$ such that $\widetilde{T}$ is a ladder tree after applying any $M_{\widetilde{T}}$ compositions of elements from $\{f_1,f_2,f_3\}$. Let $M = \mathrm{max}_{\widetilde{T}} M_{\widetilde{T}}$. So for $T$, any composition of $M+k$ such elements also gives a ladder tree.
     \end{itemize}
     
     Thus we have proven the lemma by induction.
 \end{proof}
 

 We can now define the counter we will use later on.
\begin{defi} \label{defi:lin}
    Let $F = \{f_1, f_2, f_3\}$ be the alphabet, and let $g = g_1\sqcup\cdots\sqcup g_n$ be a word
    in $F^\star$. We set $\lin(T)=0$ if $T$ is a ladder tree and
    \[
    \lin(T)\coloneqq \mathrm{max}\left\{n\in\N \,\middle|\, \exists g\in F^\star \text{ s.t. } g_1\circ\cdots\circ g_n(T) \text{ is not a ladder tree} \right\} + 1.
    \]
\end{defi}
 Notice that by lemma \ref{lem:finite_lin_number}, $\lin(T)$ is finite for every tree $T$ and $\lin(T)=0$ if, and only if, $T$ is a ladder tree. 
 The next simple lemma is an essential stepping stone for our construction.
 \begin{Lemme} \label{lem:lin_decreases}
     Let $T\in\calPTN$ be a non-ladder tree with minimal incomparable pair $(a,b)$. Then 
     \begin{equation*}
         \lin\left(T_b^a\right)<\lin(T),\quad \lin\left(T_a^b\right)<\lin(T),\quad\lin\left(T_{a+b}\right)<\lin(T).
     \end{equation*}
 \end{Lemme}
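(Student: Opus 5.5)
The plan is to show that $T_b^a=f_1(T)$, $T_a^b=f_2(T)$ and $T_{a+b}=f_3(T)$, and then to read off the inequality directly from definition~\ref{defi:lin}.

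\textbf{Identification step.} Write $T=B_+^{n_1}\circ\cdots\circ B_+^{n_p}(T_1\cdots T_k)$ as in definition~\ref{def:min_incomp_pairs}, with $k\geq2$. We argue by induction on $p$.
\begin{itemize}
\item If $p=1$, the root of $T$ has $k\geq2$ children. Here $T_1=B_+^{n_a}(F_a)$ and $T_2=B_+^{n_b}(F_b)$. The cases $k=2$ and $k\geq3$ in the definition of $f_1,f_2,f_3$ then give exactly
\begin{align*}
f_1(T)&=B_+^{n_1}\big(B_+^{n_a}(T_2F_a)F\big)=T_b^a,\\
f_2(T)&=B_+^{n_1}\big(B_+^{n_b}(T_1F_b)F\big)=T_a^b,\\
f_3(T)&=B_+^{n_1}\big(B_+^{n_a+n_b}(F_aF_b)F\big)=T_{a+b},
\end{align*}
where $F=T_3\cdots T_k$.
\item If $p\geq2$, the root has a single child $T'=B_+^{n_2}\circ\cdots\circ B_+^{n_p}(T_1\cdots T_k)$. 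The minimal incomparable pair of $T'$ is the same pair $(a,b)$. By the case $k=1$ of the definition, $f_i(T)=B_+^{n_1}(f_i(T'))$, so the claim follows from the induction hypothesis applied to $T'$.
\end{itemize}

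\textbf{Inequality step.} Let $g\in\{f_1,f_2,f_3\}$ and set $T'=g(T)$, so that $T'$ is one of $T_b^a$, $T_a^b$, $T_{a+b}$.
\begin{itemize}
\item If $T'$ is a ladder tree, then $\lin(T')=0$. Since $T$ is not a ladder tree, $\lin(T)\geq1$, and the inequality holds.
\item Otherwise $\lin(T')=n'+1$, where $n'$ is the largest $n$ for which some composition $g_1\circ\cdots\circ g_{n'}(T')$ is not a ladder tree. Then $g_1\circ\cdots\circ g_{n'}\circ g(T)$ is a composition of $n'+1$ maps applied to $T$ that is still not a ladder tree. By definition of $\lin(T)$ as a maximum plus one, this gives $\lin(T)\geq n'+2>\lin(T')$.
\end{itemize}

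One convention must be checked: the maximum in definition~\ref{defi:lin} includes $n=0$, with the empty composition being the identity. This is what makes $\lin(T)\geq1$ for a non-ladder $T$ and keeps the counting consistent. Finiteness of all the quantities involved is guaranteed by lemma~\ref{lem:finite_lin_number}.

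\textbf{Main obstacle.} The only delicate point is the identification step: checking that the forest bookkeeping ($F_a$, $F_b$, $F$) in definition~\ref{def:min_incomp_pairs} matches the $T_i$, $F_i$ notation in the definition of the $f_i$, including the cases $k=2$ versus $k\geq3$ and empty forests. Everything after that is formal.
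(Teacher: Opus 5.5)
Your proof is correct and follows the paper's argument: you identify $T_b^a=f_1(T)$, $T_a^b=f_2(T)$, $T_{a+b}=f_3(T)$, then prepend the corresponding $f_i$ to a word witnessing $\lin$ of the new tree, which gives $\lin(T)\geq\lin(f_i(T))+1$. You are slightly more careful than the paper: you verify the identification with the $f_i$, which the paper only asserts, and you treat separately the case where $f_i(T)$ is a ladder tree, where the paper's word $g_1\sqcup\cdots\sqcup g_{k-1}$ would have negative length.
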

 \begin{proof}
    Let $T\in\calPTN$ be a non-ladder tree with minimal incomparable pair $(a,b)$. Put ${\lin(T)=N}$. Assume $\lin(T^a_b)=k\in\N$. Then there exists 
    $g_1\sqcup\dots\sqcup g_{k-1}\in F^\star$ such that $g_1\circ\cdots\circ g_{k-1}(T^a_b)$ is not a ladder tree. Since $T^a_b=f_1(T)$ we have a word 
    $g_1\sqcup\dots\sqcup g_{k-1}\sqcup f_1\in F^\star$ such that $g_1\circ\cdots\circ g_{k-1}\circ f_1(T)$ is not a ladder tree. Thus $\lin(T)\geq k+1>k=\lin(T^a_b)$. 
    The other inequalities are proven in the same way, replacing $f_1$ by $f_2$ and $f_3$.
 \end{proof}

  This lemma allows us to define recursively the process tree of a tree, a crucial construction of the third author's work~\cite{Fan25}. Note that a process tree is a tree decorated by trees.


\begin{defi} \label{defi:process_tree} 
  Let $T\in\calPTN$ be a planar rooted tree. Its \emph{process tree $\pr(T)$}
  is a planar rooted tree whose vertices are decorated by elements from $\calPTN$ defined recursively on $\lin(T)$.
  \begin{itemize}
   \item If $\lin(T)=0$, then $T$ is a ladder tree (eventually empty) and we set $\pr(T):=\tdun{T}$.
   \item Assume that for $N\in\N$, $\pr(S)$ has been defined for all $S\in\calPTN$ with $\lin(S)\leq N$. Let $T\in\calPTN$ be a planar rooted tree with $\lin(T)=N+1$. Then $T$ is not a ladder tree and
   let $(a,b)$ be its minimal incomparable pair. We set
   $$\pr(T) := B_+^{T}\left(\pr(T_{a+b})\, \pr(T_{b}^{a})\, \pr(T_{a}^{b})\right).$$
  \end{itemize}
  For any tree $T\in\calPTN$, we write $\delta_{\pr(T)}:V(\pr(T))\longrightarrow\calPTN$ the decoration map of $\pr(T)$.
\end{defi}
Notice that the above definition makes sense by lemma~\ref{lem:lin_decreases}.

We will need one technical result regarding the process tree.


\begin{Lemme} \label{lem:pr_B_plus}
 For any tree $T\in\calPTN$ and any $n\in\N^*$ we have
 \begin{equation*}
  (\pr(B_+^n(T)),\delta_{\pr(B_+^n(T))}) = (\pr(T),B_+^n\circ\delta_{\pr(T)}).
 \end{equation*}
\end{Lemme}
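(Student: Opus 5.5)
We argue by strong induction on $\lin(T)$. The first step is to record that $B_+^n$ commutes with the basic operations. For a non-ladder tree $T$, $B_+^n(T)$ is also non-ladder, and its decomposition of Definition~\ref{def:min_incomp_pairs} is
\[
B_+^n(T)=B_+^n\circ B_+^{n_1}\circ\cdots\circ B_+^{n_p}(T_1\cdots T_k).
\]
This is the same as the decomposition of $T$ with the extra prefix $B_+^n$. Hence the minimal incomparable pair $(a,b)$ of $B_+^n(T)$ consists of the same vertices as that of $T$, and directly from the definitions
\begin{equation*}
(B_+^n(T))^a_b=B_+^n(T^a_b),\qquad (B_+^n(T))^b_a=B_+^n(T^b_a),\qquad (B_+^n(T))_{a+b}=B_+^n(T_{a+b}).
\end{equation*}

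The second step is to show $\lin(B_+^n(T))=\lin(T)$. Since $B_+^n(T)$ has a root with one child, the $k=1$ case of the definition of $f_1,f_2,f_3$ gives $f_j(B_+^n(S))=B_+^n(f_j(S))$ for every tree $S$. Iterating, $g_1\circ\cdots\circ g_m(B_+^n(T))=B_+^n(g_1\circ\cdots\circ g_m(T))$. Moreover $B_+^n(S)$ is a ladder if and only if $S$ is. Plugging this into Definition~\ref{defi:lin} yields the equality of $\lin$. This makes the induction on $\lin(T)$ legitimate for both sides simultaneously.

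The third step is the induction itself. If $\lin(T)=0$, then $T$ and $B_+^n(T)$ are both ladders. So $\pr(B_+^n(T))=\tdun{B_+^n(T)}$ and $\pr(T)=\tdun{T}$, which have the same shape, and the decorations satisfy $B_+^n\circ\delta_{\pr(T)}$ as required. If $\lin(T)=N+1$, Definition~\ref{defi:process_tree} and the first step give
\[
\pr(B_+^n(T))=B_+^{B_+^n(T)}\bigl(\pr(B_+^n(T_{a+b}))\,\pr(B_+^n(T^a_b))\,\pr(B_+^n(T^b_a))\bigr).
\]
Lemma~\ref{lem:lin_decreases} together with the second step lets us apply the induction hypothesis to each of $T_{a+b}$, $T^a_b$, $T^b_a$. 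Each subtree $\pr(B_+^n(\cdot))$ is then $\pr(\cdot)$ with every decoration composed with $B_+^n$. The root decoration $B_+^n(T)$ is $B_+^n$ applied to the root decoration $T$ of $\pr(T)$. Therefore the underlying planar tree is $\pr(T)$ and the decoration map is $B_+^n\circ\delta_{\pr(T)}$.

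The main obstacle is the second step. The process tree is defined by recursion on $\lin$, so without $\lin(B_+^n(T))=\lin(T)$, or at least the inequality $\lin(B_+^n(T_{a+b}))<\lin(B_+^n(T))$, the recursion would not be aligned. I expect this to reduce cleanly to the $k=1$ clause of the $f_i$. I would also check that the identification of the minimal incomparable pair is not disturbed by the new root, which holds because the new root has a single child.
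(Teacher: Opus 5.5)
Your proof is correct and follows the paper's own argument: induction on $\lin(T)$, driven by the identities $(B_+^n(T))_{a+b}=B_+^n(T_{a+b})$, $(B_+^n(T))^a_b=B_+^n(T^a_b)$ and $(B_+^n(T))^b_a=B_+^n(T^b_a)$, which the paper states as direct consequences of the definitions. Your second step, $\lin(B_+^n(T))=\lin(T)$, is true but not needed: the defining relation of $\pr$ holds for every non-ladder tree, and lemma~\ref{lem:lin_decreases} applied to $T$ itself already puts $T_{a+b}$, $T^a_b$ and $T^b_a$ under the induction hypothesis.
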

\begin{proof}
 This is easily proved by induction on $\lin(T)$ thanks to the following facts which are direct consequences of the definitions of $T_a^b$, $T_b^a$, and $T_{a+b}$: ${B_+^n(T)_{a+b}=B_+^n(T_{a+b})}$, ${B_+^n(T)_a^b=B_+^n(T_a^b)}$, and ${B_+^n(T)_b^a=B_+^n(T_b^a)}$.
\end{proof}

\subsection{Branched binarisation map}

The next map is a planar version of the branched binarisation map~\cite{clavier2020double}.
\begin{defi} \label{defi:old_bin_map}
 Let $\fraks^{PN}:\calPTN\longrightarrow\calPTxy$ be the linear map defined inductively by ${\fraks^{PN}(\emptyset)=\emptyset}$ and
 \begin{equation*}
  \fraks^{PN}(B_+^n(F))=\underbrace{B_+^x \circ \cdots \circ B_+^x}_{n-1\text{ times}}\circ B_+^y(\fraks^{PN}(F)), \qquad\fraks^{PN}(T_1\cdots T_k)=\fraks^{PN}(T_1)\cdots\fraks^{PN}(T_k).
 \end{equation*}
 It is clear that $\fraks^{PN}\left(\calPTNconv\right) \subseteq \calPTxyconv$, but this simple fact will play an important role in the rest of the paper.
 
\end{defi}
Notice that $\fraks^{PN}$ can be defined using a universal property of $\calPTN$, namely that $\calPTN$ is the initial object in the category of $\N^*$-operated algebras. See~\cite{guo2009operated,kreimer2013renormalization} as well as~\cite{clavier2020locality} for an elementary proof.
Since we are not dealing with these objects in this paper we chose to use the more mundane definition above, which is stricly equivalent. On ladder trees $\fraks^{PN}$ coincides with the binarisation map $\fraks:\calW_{\N^*}\longrightarrow\calW_{\{x,y\}}$ defined in equation~\eqref{eq:bin_map_words}. 

\begin{Egs} \label{ex:sPN}
A simple computation gives
\begin{equation*}
 \fraks^{PN} \left(\begin{xy}
      {(-4,2) \ar @{{*}-{*}} (0,-2)},
      {(4,2) \ar @{{*}-{*}} (0,-2)},
      (-2,2)*{a}, (2,-2)*{b}, (6,2)*{c},
    \end{xy}\right) = (B_+^{x})^{b-1}\circ B_+^{y}\left((B_+^{x})^{a-1}\circ B_+^{y}\left(\emptyset\right)(B_+^{x})^{c-1}\circ B_+^{y}\left(\emptyset\right)\right) 
    = \begin{xy}
      {(-4,10) \ar @{{*}-{*}} (-4,6)},
      {(-4,6) \ar @{{*}.{*}} (-4,2)},
      (-2,10)*{y},
      (-2,6)*{x},
      (-2,2)*{x},
      {(-6.5,2) \ar @{{}-{}} (-5.5,2)},
      {(-6,4) \ar @{{}-{}} (-6,2)},
      {(-6.5,10) \ar @{{}-{}} (-5.5,10)},
      {(-6,8) \ar @{{}-{}} (-6,10)},
      (-6,6)*{a},
      {(4,10) \ar @{{*}-{*}} (4,6)},
      {(4,6) \ar @{{*}.{*}} (4,2)},
      (6,10)*{y},
      (6,6)*{x},
      (6,2)*{x},
      {(1.5,2) \ar @{{}-{}} (2.5,2)},
      {(2,4) \ar @{{}-{}} (2,2)},
      {(1.5,10) \ar @{{}-{}} (2.5,10)},
      {(2,8) \ar @{{}-{}} (2,10)},
      (2,6)*{c},
      {(0,-2) \ar @{{*}-{*}} (0,-6)},
      {(0,-6) \ar @{{*}.{*}} (0,-10)},
      (2,-2)*{y},
      (2,-6)*{x},
      (2,-10)*{x},
      {(-2.5,-10) \ar @{{}-{}} (-1.5,-10)},
      {(-2,-8) \ar @{{}-{}} (-2,-10)},
      {(-2.5,-2) \ar @{{}-{}} (-1.5,-2)},
      {(-2,-4) \ar @{{}-{}} (-2,-2)},
      (-2,-6)*{b},
      {(-4,2) \ar @{{*}-{*}} (0,-2)},
      {(4,2) \ar @{{*}-{*}} (0,-2)},
    \end{xy}.
\end{equation*}

\end{Egs}

We can now define the binarisation map of the third author's work~\cite{Fan25}. 
\begin{defi} \label{defi:error_tree} 
%
  Let $T\in\PTN$ be a planar rooted tree. We define its \emph{error term $T^e$} as follows: if $T$ is a ladder tree (eventually empty), then $T^e=0$. Otherwise, let $(a,b)$ be its minimal incomparable pair, write $T=B_+^{n_1}\circ\cdots\circ B_+^{n_p}(B_+^{n_a}(F_a)B_+^{n_b}(F_b)F)$ as before and set
  \begin{align*}
   T^e = & B_+^{n_1}\circ\cdots\circ B_+^{n_p}(B_+^{n_a+n_b}(F_aF_b)F) \\
   & - \sum_{i=1}^{n_a-1}\binom{n_a-i+n_b-1}{n_b-1}B_+^{n_1}\circ\cdots\circ B_+^{n_p}\left(B_+^{n_a+n_b-i}(B_+^{i}(F_a)F_b)F\right) \\
   & - \sum_{i=1}^{n_b-1}\binom{n_b-i+n_a-1}{n_a-1}B_+^{n_1}\circ\cdots\circ B_+^{n_p}\left(B_+^{n_a+n_b-i}(B_+^{i}(F_b)F_a)F\right)
  \end{align*}
  where the sum $\sum_{i=1}^{n_a-1}$ (resp. $\sum_{i=1}^{n_b-1}$) is taken to be zero if $n_a=1$ (resp. if $n_b=1$).
  
We extend the map $T\mapsto T^e$ to a map on $\calPTN$ by linearity.
  

  Recall that for $T\in\calPTN$, $\pr(T)$ is a tree decorated by elements of $\calPTN$ and that $\delta_{\pr(T)}$ is its decoration map. Then, we define the linear map $\phi : \calPTN\longrightarrow\calPTN$
by
  \[
  \phi(T) = T + \sum_{v\in V(\pr(T))}\left(\delta_{\pr(T)}(v)\right)^e
  \]
  The map $\fraks^{PT} : \calPTN\longrightarrow\calPTxy$
is defined by
  \[
  \fraks^{PT}(T) = \fraks^{PN}\circ \phi(T).
  \]
\end{defi}

\begin{Eg} \label{ex:phi_sPT}
 Setting $T=\begin{xy}
      {(-4,2) \ar @{{*}-{*}} (0,-2)},
      {(4,2) \ar @{{*}-{*}} (0,-2)},
      (-2,2)*{a}, (2,-2)*{b}, (6,2)*{c},
    \end{xy}$ as in example~\ref{ex:sPN}, we have $\pr(T)=\begin{xy}
      {(-4,2) \ar @{{*}-{*}} (0,-2)},
      {(0,2) \ar @{{*}-{*}} (0,-2)},
      {(4,2) \ar @{{*}-{*}} (0,-2)},
      (-2,3.5)*{\ell_1}, (2,3.5)*{\ell_2}, (2.5,-2)*{T}, (6,3.5)*{\ell_3},
    \end{xy}$ where $\ell_1$, $\ell_2$, and $\ell_3$ are ladder trees given by 
$\ell_1=\iota(b(a+c))$, $\ell_2=\iota(bac)$, and $\ell_3=\iota(bca)$, $\iota$ being the canonical embedding given by equation~\eqref{eq:iota_map}. Then, since $\ell_1^e=\ell_2^e=\ell_3^e=0$ we have $\phi(T)=T+T^e$. By definition of the error term:
    \begin{equation*}
     \phi(T)=\begin{xy}
      {(-4,2) \ar @{{*}-{*}} (0,-2)},
      {(4,2) \ar @{{*}-{*}} (0,-2)},
      (-2,2)*{a}, (2,-2)*{b}, (6,2)*{c},
    \end{xy}
    +\begin{xy}
      {(0,2) \ar @{{*}-{*}} (0,-2)},
      (5,2)*{a+c}, (2,-2)*{b},
    \end{xy}
    -\sum_{i=1}^{a-1}\binom{a-i+c-1}{c-1}
    \raisebox{-0.2\height}{
    \begin{xy}
      {(0,2) \ar @{{*}-{*}} (0,-2)},
      {(0,6) \ar @{{*}-{*}} (0,2)},
      (8,2)*{a+c-i}, (2,-2)*{b}, (2,6)*{i}
    \end{xy}}
    -\sum_{i=1}^{c-1}\binom{c-i+a-1}{a-1} 
    \raisebox{-0.2\height}{
    \begin{xy}
      {(0,2) \ar @{{*}-{*}} (0,-2)},
      {(0,6) \ar @{{*}-{*}} (0,2)},
      (8,2)*{a+c-i}, (2,-2)*{b}, (2,6)*{i}
    \end{xy}}.
    \end{equation*}
 Using definition~\ref{defi:old_bin_map}, one then has $\fraksPT(T)$. Taking for simplicity $a=b=c=2$ we find
 \begin{equation*}
 \fraksPT(T)=
  \begin{xy}
      {(-4,6) \ar @{{*}-{*}} (-4,2)},
      (-2,6)*{y},
      (-2,2)*{x},
      {(4,6) \ar @{{*}-{*}} (4,2)},
      (6,6)*{y},
      (6,2)*{x},
      {(0,-2) \ar @{{*}-{*}} (0,-6)},
      (2,-2)*{y},
      (2,-6)*{x},
      {(-4,2) \ar @{{*}-{*}} (0,-2)},
      {(4,2) \ar @{{*}-{*}} (0,-2)},
    \end{xy}
    +   \begin{xy}
      {(0,10) \ar @{{*}-{*}} (0,6)},
      {(0,6) \ar @{{*}-{*}} (0,2)},
      {(0,2) \ar @{{*}-{*}} (0,-2)},
      (2,10)*{y},
      (2,6)*{x},
      (2,2)*{x},
      {(0,-2) \ar @{{*}-{*}} (0,-6)},
      {(0,-6) \ar @{{*}-{*}} (0,-10)},
      (2,-2)*{x},
      (2,-6)*{y},
      (2,-10)*{x},
    \end{xy}
    -4~\begin{xy}
      {(0,10) \ar @{{*}-{*}} (0,6)},
      {(0,6) \ar @{{*}-{*}} (0,2)},
      {(0,2) \ar @{{*}-{*}} (0,-2)},
      (2,10)*{y},
      (2,6)*{y},
      (2,2)*{x},
      {(0,-2) \ar @{{*}-{*}} (0,-6)},
      {(0,-6) \ar @{{*}-{*}} (0,-10)},
      (2,-2)*{x},
      (2,-6)*{y},
      (2,-10)*{x},
    \end{xy}
 \end{equation*}
\end{Eg}

The following theorem is one of the main results from the work of the third author~\cite{Fan25}.
\begin{thm}\label{thm F.} 
  The following diagram is commutative.
  \begin{equation} \label{eq:Ku-Yu_main_result}
    \begin{tikzcd}
   & \calPTN \arrow[rd, "\fraks^{PN}"] & \\
  \calPTN  \arrow[rr, "\fraks^{PT}"] \arrow[ru, "\phi"] \arrow[d, "\flaten_1"'] & & \calPTxy  \arrow[d, "\flaten_0"] \\
  \calW_{\N^*} \arrow[rr, "\fraks"]               &  & \calW_{\{x,y\}}            
  \end{tikzcd}
  \end{equation}
\end{thm}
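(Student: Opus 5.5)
The upper triangle commutes by definition of $\fraks^{PT}$, so the only real content is the square
\[
\flaten_0\circ\fraks^{PN}\circ\phi=\fraks\circ\flaten_1
\]
on $\calPTN$. I would prove it by strong induction on $\lin(T)$, after first establishing a ``defect formula'' for the failure of $\flaten_0\circ\fraks^{PN}$ to commute with the one-step resolution of the minimal incomparable pair.

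\textbf{Base case.} If $\lin(T)=0$, then $T=\iota(w)$ is a ladder and $\pr(T)=\tdun{T}$ with $T^e=0$, so $\phi(T)=T$. Moreover $\fraks^{PN}$ agrees with $\fraks$ on ladders, and both flattening maps act as the identity on ladders, so the square holds.

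\textbf{Key identity.} For a non-ladder $T$ with minimal incomparable pair $(a,b)$, I would show
\begin{equation*}
\flaten_0\circ\fraks^{PN}(T)=\flaten_0\circ\fraks^{PN}\left(T_b^a+T_a^b\right)-\flaten_0\circ\fraks^{PN}(T^e)+\flaten_0\circ\fraks^{PN}(T_{a+b}),
\end{equation*}
or, equivalently, $\flaten_0\fraks^{PN}(T-T_b^a-T_a^b-T_{a+b}+T^e)=0$.

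By Lemma~\ref{lem:pr_B_plus}, and since $\fraks^{PN}$ and both flattenings commute with prefixing a common ladder $B_+^{n_1}\circ\cdots\circ B_+^{n_p}$ (they simply prepend letters), it suffices to treat the case $p=0$ for the pair itself, with a trailing forest $F$. Since $\flaten_0$ is multiplicative from forests to shuffles, the forest $F$ factors out as $\shuffle\,\flaten_0\fraks^{PN}(F)$.

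This reduces everything to the word identity
\[
(x^{n_a-1}y\,u)\shuffle(x^{n_b-1}y\,v)
=x^{n_a-1}y\,(u\shuffle x^{n_b-1}yv)+x^{n_b-1}y\,(x^{n_a-1}yu\shuffle v)+\text{correction}.
\]
Here the correction collects the shuffles in which the $y$ ending one block lies strictly inside the $x$-run of the other block. Counting those interleavings produces exactly $x^{n_a+n_b-1-?}$-type words with binomial coefficients $\binom{n_a-i+n_b-1}{n_b-1}$, together with the term $x^{n_a+n_b-1}y(u\shuffle v)$ coming from the collision. Matching this with the definition of $T^e$ and of $T_{a+b}$ is a direct lattice-path count.

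\textbf{Induction step.} On the series side, by Definition~\ref{defi:flat_map} and associativity of the quasi-shuffle,
\[
\flaten_1(T)=\flaten_1(T_b^a)+\flaten_1(T_a^b)+\flaten_1(T_{a+b}),
\]
because $u'\cshuffle v'$ splits according to which first letter comes first or whether the two first letters merge. Then $\phi(T)=T+T^e+\sum_{v\neq\text{root}}(\delta(v))^e$, and the non-root vertices of $\pr(T)$ are exactly those of $\pr(T_{a+b})$, $\pr(T_b^a)$ and $\pr(T_a^b)$. Hence
\[
\phi(T)=T+T^e+\bigl(\phi(T_{a+b})-T_{a+b}\bigr)+\bigl(\phi(T_b^a)-T_b^a\bigr)+\bigl(\phi(T_a^b)-T_a^b\bigr).
\]
Applying $\flaten_0\fraks^{PN}$, the key identity turns the right-hand side into $\sum\flaten_0\fraks^{PN}\phi(\cdot)$ over the three children. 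These are handled by induction via Lemma~\ref{lem:lin_decreases}, giving $\fraks\flaten_1$ of the three children, whose sum is $\fraks\flaten_1(T)$.

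The main obstacle is the combinatorial word identity. In particular, signs and coefficients must match $T^e$ exactly, including how the $F_a$, $F_b$ words attach after the $y$'s. I would verify it first on $F_a=F_b=\emptyset$ and $n_a=n_b=2$ against Example~\ref{ex:phi_sPT}, then prove it by induction on $n_a+n_b$ using the recursive definition of $\shuffle$.
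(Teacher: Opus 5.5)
The paper does not prove this theorem. It is quoted from \cite{Fan25} and used as a black box, so your argument is best read as a self-contained replacement for that citation. As such it is correct.

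\begin{itemize}
\item The triangle is the definition of $\fraks^{PT}$.
\item The recursion $\pr(T)=B_+^{T}(\pr(T_{a+b})\,\pr(T_b^a)\,\pr(T_a^b))$ gives exactly $\phi(T)-\phi(T_{a+b})-\phi(T_b^a)-\phi(T_a^b)=T+T^e-T_{a+b}-T_b^a-T_a^b$.
\item The series side splits as $\flaten_1(T)=\flaten_1(T_b^a)+\flaten_1(T_a^b)+\flaten_1(T_{a+b})$. This needs commutativity of $\cshuffle$, not only associativity, because in $T_b^a$ the subtree $B_+^{n_b}(F_b)$ sits to the left of $F_a$. The same remark applies to $\shuffle$ on the integral side.
\item Lemma~\ref{lem:lin_decreases} makes the strong induction on $\lin$ legitimate.
\end{itemize}
Beyond self-containment, your route explains where the binomial coefficients in $T^e$ come from.

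Your description of the word identity needs two corrections.

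\textbf{No collision term.} A shuffle has no collisions. So $A\shuffle B$, with $A=x^{n_a-1}yU$ and $B=x^{n_b-1}yV$, contains no term $x^{n_a+n_b-1}y(U\shuffle V)$. That word enters only through the leading summand $+T_{a+b}$ of $T^e$. It cancels against the $+\flaten_0\fraks^{PN}(T_{a+b})$ in your key identity. The correction is therefore exactly $\flaten_0\fraks^{PN}(T_{a+b}-T^e)$, i.e.\ the two binomial sums and nothing else.

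\textbf{No induction on $n_a+n_b$ needed.} The count is explicit. Sort the shuffles by which of the two $y$'s comes first, and by the number $j$ of $x$'s of the other block that precede it:
\[
A\shuffle B=\sum_{j=0}^{n_b-1}\binom{n_a-1+j}{j}x^{n_a-1+j}y\,(U\shuffle x^{n_b-1-j}yV)+\sum_{j=0}^{n_a-1}\binom{n_b-1+j}{j}x^{n_b-1+j}y\,(x^{n_a-1-j}yU\shuffle V).
\]
The $j=0$ terms are the contributions of $T_b^a$ and $T_a^b$. For $j\geq1$, set $i=n_b-j$ in the first sum and $i=n_a-j$ in the second. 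The first sum then gives $x^{n_a+n_b-i-1}y(x^{i-1}yV\shuffle U)$ with coefficient $\binom{n_b-i+n_a-1}{n_a-1}$. The second gives $x^{n_a+n_b-i-1}y(x^{i-1}yU\shuffle V)$ with coefficient $\binom{n_a-i+n_b-1}{n_b-1}$. These are precisely the subtracted terms of $T^e$.

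Finally, Lemma~\ref{lem:pr_B_plus} plays no role in the key identity, which does not involve $\pr$. The reduction to the forest $B_+^{n_a}(F_a)B_+^{n_b}(F_b)F$ is immediate for two reasons. First, $T\mapsto T_b^a,\,T_a^b,\,T_{a+b},\,T^e$ all commute with $B_+^{n}$. Second, $\flaten_0\fraks^{PN}(B_+^n(G))=x^{n-1}y\sqcup\flaten_0\fraks^{PN}(G)$ for any forest $G$.
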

Specialising diagram~\ref{eq:Ku-Yu_main_result} to convergent trees, we obtain the simple yet important corollary.
\begin{Cor} \label{coro:arbo_Konts}
The following diagram is commutative:
 \begin{equation} \label{eq:conv_bin}
  \begin{tikzcd}
\calPTNconv \arrow[r, "\fraks^{PT}"] \arrow[d, "\flaten_1"'] & \calPTxyconv \arrow[d, "\flaten_0"] \\
\Wcv \arrow[r, "\fraks"]            & \Wxycv               
\end{tikzcd}
 \end{equation}
 Furthermore, the \AZVs{} of definition~\ref{def:azvs} admits an interated integral representation through
 the branched binarisation map $\fraks^{PT}$:
\begin{equation} \label{eq:int_rep_AZVs}
 \forall T\in\calPTNconv,\quad\zeta^T(T)=\zeta^T_\shuffle\circ\fraks^{PT}(T).
\end{equation}
\end{Cor}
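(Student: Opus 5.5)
The corollary has two parts: the commutative square \eqref{eq:conv_bin} and the integral representation \eqref{eq:int_rep_AZVs}. The square is the restriction of the outer square of theorem~\ref{thm F.} to convergent trees, so commutativity $\flaten_0\circ\fraks^{PT}=\fraks\circ\flaten_1$ on $\calPTNconv$ comes for free. What actually needs proof is that the arrows are well defined between the convergent subspaces. Theorem~\ref{thm:AZV_flaten} gives $\flaten_1(\calPTNconv)=\Wcv$ and $\flaten_0(\calPTxyconv)=\Wxycv$, and equation~\eqref{eq:Kontsevich} gives $\fraks(\Wcv)\subseteq\Wxycv$. The remaining point is $\fraks^{PT}(\calPTNconv)\subseteq\calPTxyconv$.

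For that inclusion I would write $\fraks^{PT}=\fraks^{PN}\circ\phi$ and use the stated fact $\fraks^{PN}(\calPTNconv)\subseteq\calPTxyconv$. It then suffices to show $\phi(\calPTNconv)\subseteq\calPTNconv$, that is, that every tree occurring in $\phi(T)$ has root not decorated by $1$ when $T$ has root decoration $n_1\geq2$.

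Every decoration $\delta_{\pr(T)}(v)$ of the process tree is obtained from $T$ by iterating the operations $S\mapsto S_b^a,S_a^b,S_{a+b}$ of definition~\ref{def:min_incomp_pairs}. I would prove by induction on $\lin$ that these operations never change the root decoration. The minimal incomparable pair sits strictly above the trunk $B_+^{n_1}\circ\cdots\circ B_+^{n_p}$ with $p\geq1$, so all three operations leave the root $n_1$ untouched. Lemma~\ref{lem:pr_B_plus} expresses the same fact. Likewise, in each error term $S^e$ of definition~\ref{defi:error_tree}, every tree keeps the same prefix $B_+^{n_1}\circ\cdots$, so its root is still decorated by $n_1$. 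Hence $\phi(T)\in\calPTNconv$.

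A more direct alternative is to note that $\fraks^{PN}$ of any tree with root decoration $n\geq2$ has root $x$ and leaves $y$. This shows $\fraks^{PT}(T)\in\calPTxyconv$ without passing through the inclusion for $\fraks^{PN}$.

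For \eqref{eq:int_rep_AZVs}, take $T\in\calPTNconv$ and chain the identities:
\[
\zeta^T(T)=\zeta\circ\flaten_1(T)=\zetash\circ\fraks\circ\flaten_1(T)=\zetash\circ\flaten_0\circ\fraks^{PT}(T)=\zetaTsh\circ\fraks^{PT}(T).
\]
The first equality is theorem~\ref{thm:AZV_flaten}. The second is Kontsevich's relation \eqref{eq:Kontsevich}, which applies because $\flaten_1(T)\in\Wcv$. The third is commutativity of \eqref{eq:conv_bin}. The last is theorem~\ref{thm:AZV_flaten} again, which applies because $\fraks^{PT}(T)\in\calPTxyconv$.

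The only step needing genuine care is the bookkeeping that $\phi$ preserves root decorations, and I expect it to be the main (mild) obstacle. In particular, the error terms involve coefficients and several different trees, and each of them must be checked to retain the root $n_1$.
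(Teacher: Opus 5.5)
Your proposal is correct and follows essentially the same route as the paper. You show that $\phi$ preserves root decorations, hence convergence, and deduce $\fraks^{PT}(\calPTNconv)\subseteq\calPTxyconv$ through $\fraks^{PN}$; you then restrict the square of theorem~\ref{thm F.} and chain theorem~\ref{thm:AZV_flaten}, Kontsevich's relation and the square to obtain \eqref{eq:int_rep_AZVs}. Your direct check that $\fraks^{PN}$ sends a tree with root decoration $n\geq2$ to a tree with root $x$ and leaves $y$ is a self-contained substitute for the paper's appeal to Lemma A.3 of the earlier non-planar work.
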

\begin{proof}
  Let $T\in\calPTN$ and write $\phi(T)=\sum_{i\in I}c_i t_i$, where $I$ is a non-empty finite set, the $t_i$s are some planar rooted trees and the $c_i$s are some coefficients. Let $r_T$ be the root of $T$ and for all $i\in I$, $r_i$ be the root of $t_i$. Then by definition of $\phi$, for all $i\in I, d_{t_i}(r_i)=d_T(r_T)$ (i.e. the roots of each of the $t_i$ have the same decoration than the root of $T$).
 
 In particular, if $T$ is convergent, then so is $\phi(T)$. 
 Copying the proof of a lemma for non-planar trees given by P.~Clavier~\cite[Lemma A.3]{clavier2020double}, one can show $\fraksPT(\calPTNconv)\subseteq\calPTxyconv$ since planarity does not play a role for convergency.
  Moreover, it is easy to see, and proven in~\cite[Lemma 3.24]{clavier2020double} that $\flaten_1(\calPTNconv)\subseteq\Wcv$ and $\flaten_0(\calPTxyconv)\subseteq\Wxycv$. Furthermore $\fraks(\Wcv)\subseteq\Wxycv$ by the iterated integral representation of \MZVs{}. Thus specialising the square part of diagram~\eqref{eq:Ku-Yu_main_result} to $\calPTNconv$ gives diagram~\eqref{eq:conv_bin}.
 
 Let us now prove equation~\eqref{eq:int_rep_AZVs}. For any $T\in\calPTNconv$ we have
 \begin{align*}
  \zeta^T(T) & = \zeta\circ\flaten_1(T)\quad\text{ by theorem \ref{thm:AZV_flaten}} \\
  & = \zeta_\shuffle\circ\fraks\circ\flaten_1(T)\quad\text{ by relation \eqref{eq:Kontsevich}} \\
  & = \zeta_\shuffle\circ\flaten_0\circ\fraks^{PT}(T)\quad\text{ by diagram \eqref{eq:conv_bin}} \\
  & = \zeta^T_\shuffle\circ\fraks^{PT}(T)\quad\text{ by theorem \ref{thm:AZV_flaten}}.  \qedhere
 \end{align*}
\end{proof}
Thus the binarisation map of~\cite{Fan25} directly relates \AZVs{} without the need to use conical sums~\cite{clavier2024generalisations}. This is an important step forward toward a fully satisfactory generalisation of \MZVs{} to rooted trees. In the next subsection we show that we also have the last missing piece of this generalisation, namely an arborified version of Hoffman's regularization relation \eqref{eq:Hoffman}. For now, we finish this subsection with an example of relation given by corollary~\ref{coro:arbo_Konts}.
\begin{Eg}
 Using example~\ref{ex:phi_sPT} we find
 \begin{equation*}
  \zeta^T \left(\begin{xy}
      {(-4,2) \ar @{{*}-{*}} (0,-2)},
      {(4,2) \ar @{{*}-{*}} (0,-2)},
      (-2,2)*{2}, (2,-2)*{2}, (6,2)*{2},
    \end{xy}\right)
    = \zetaTsh\left(\begin{xy}
      {(-4,6) \ar @{{*}-{*}} (-4,2)},
      (-2,6)*{y},
      (-2,2)*{x},
      {(4,6) \ar @{{*}-{*}} (4,2)},
      (6,6)*{y},
      (6,2)*{x},
      {(0,-2) \ar @{{*}-{*}} (0,-6)},
      (2,-2)*{y},
      (2,-6)*{x},
      {(-4,2) \ar @{{*}-{*}} (0,-2)},
      {(4,2) \ar @{{*}-{*}} (0,-2)},
    \end{xy}\right)
    +\zetaTsh\left(~   \begin{xy}
      {(0,10) \ar @{{*}-{*}} (0,6)},
      {(0,6) \ar @{{*}-{*}} (0,2)},
      {(0,2) \ar @{{*}-{*}} (0,-2)},
      (2,10)*{y},
      (2,6)*{x},
      (2,2)*{x},
      {(0,-2) \ar @{{*}-{*}} (0,-6)},
      {(0,-6) \ar @{{*}-{*}} (0,-10)},
      (2,-2)*{x},
      (2,-6)*{y},
      (2,-10)*{x},
    \end{xy}\right)
    -4~\zetaTsh\left(~ \begin{xy}
      {(0,10) \ar @{{*}-{*}} (0,6)},
      {(0,6) \ar @{{*}-{*}} (0,2)},
      {(0,2) \ar @{{*}-{*}} (0,-2)},
      (2,10)*{y},
      (2,6)*{y},
      (2,2)*{x},
      {(0,-2) \ar @{{*}-{*}} (0,-6)},
      {(0,-6) \ar @{{*}-{*}} (0,-10)},
      (2,-2)*{x},
      (2,-6)*{y},
      (2,-10)*{x},
    \end{xy}\right).
 \end{equation*}
 This can easily be checked using theorem~\ref{thm:AZV_flaten} and equation~\eqref{eq:Kontsevich} relating the two representations of \MZVs{}.
\end{Eg}


\subsection{Arborified Hoffman's relations}

We start by showing that the map $\phi$ of definition~\ref{defi:error_tree} is a bijection. First, recall that $\PTN$ is the set of planar rooted trees decorated by $\N^*$, i.e. the canonical basis of $\calPTN$. 
Define the pairing
\begin{equation*}
    \langle T,T'\rangle = \begin{cases}
        & 1\text{ if }T=T' \\
        & 0\text{ if }T\neq T',
    \end{cases}
\end{equation*} 
for any $(T,T')\in\PTN$, and extend it by linearity to a pairing on $\calPTN$.
 
\begin{defi}
    Let $\Sigma$ be an element in $\calPTN$. We define the {\it support} $\mathrm{Supp}(\Sigma)$ of $\Sigma$ to be 
    \begin{equation*}
        \mathrm{Supp}(\Sigma)\coloneqq\{T\in\PTN \,|\, \langle\Sigma,T\rangle\neq0\}.
    \end{equation*} 
\end{defi}

 Our proof of the bijectivity of $\phi$ relies on the fact that the linearity of a tree (definition~\ref{defi:lin}) decreases when one takes the error term of a tree (definition~\ref{defi:error_tree}).
  \begin{Lemme} \label{lem:lin_error_term}
   Let $T$ be any non-ladder tree in $\PTN$. Then for any tree $T'\in\Supp(T^e)$, we have $\lin(T')<\lin(T)$.
  \end{Lemme}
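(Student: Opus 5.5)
The idea is that $\lin$ only sees the shape of a tree, not its decorations. Every tree in $\Supp(T^e)$ has the same underlying planar tree as one of $T_{a+b}$, $T_b^a$ or $T_a^b$. Lemma~\ref{lem:lin_decreases} then gives the strict inequality.

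\textbf{Step 1: $\lin$ depends only on shape.} Let $\mathrm{sh}(T)$ denote the undecorated planar tree underlying $T\in\PTN$. I first check by induction on $|V(T)|$, directly from the defining formulas, that for $i\in\{1,2,3\}$ the shape $\mathrm{sh}(f_i(T))$ depends only on $\mathrm{sh}(T)$.
\begin{itemize}
\item For $f_1$ and $f_2$ the operations only regraft subtrees: the cases $k=1$, $k=2$ and $k\geq3$ are read off from the number of children of the root and the shapes of the $T_j$.
\item For $f_3$ one merges two vertices into one decorated $a_1+a_2$. The resulting shape is still determined by the shapes of $T_1$, $T_2$ and of the forests $F_1,F_2$.
\end{itemize}
Iterating, $\mathrm{sh}(g_1\circ\cdots\circ g_m(T))$ depends only on $\mathrm{sh}(T)$ for every word $g_1\cdots g_m\in F^\star$. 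Being a ladder tree is a property of the shape alone. Hence, by Definition~\ref{defi:lin}, $\mathrm{sh}(T)=\mathrm{sh}(T')$ implies $\lin(T)=\lin(T')$.

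\textbf{Step 2: identify the support.} Write $T=B_+^{n_1}\circ\cdots\circ B_+^{n_p}(B_+^{n_a}(F_a)B_+^{n_b}(F_b)F)$, where $(a,b)$ is the minimal incomparable pair. By Definition~\ref{defi:error_tree}, $\Supp(T^e)$ is contained in the set of trees appearing in the three parts of $T^e$; possible cancellations only make the support smaller.
\begin{itemize}
\item The first term is exactly $T_{a+b}$.
\item A tree in the first sum is $B_+^{n_1}\circ\cdots\circ B_+^{n_p}\big(B_+^{n_a+n_b-i}(B_+^i(F_a)F_b)F\big)$. It has the same shape as $T_a^b=B_+^{n_1}\circ\cdots\circ B_+^{n_p}\big(B_+^{n_b}(B_+^{n_a}(F_a)F_b)F\big)$; only two decorations change.
\item A tree in the second sum, $B_+^{n_1}\circ\cdots\circ B_+^{n_p}\big(B_+^{n_a+n_b-i}(B_+^i(F_b)F_a)F\big)$, has the same shape as $T_b^a$.
\end{itemize}

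\textbf{Step 3: conclude.} By Step 1, each $T'\in\Supp(T^e)$ has $\lin(T')$ equal to one of $\lin(T_{a+b})$, $\lin(T_a^b)$ or $\lin(T_b^a)$. Each of these is strictly less than $\lin(T)$ by Lemma~\ref{lem:lin_decreases}.

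\textbf{Main obstacle.} The only delicate point is Step 1 for $f_3$. I will prove it by writing each $f_i$ as $f_i(T)=\tilde f_i(\mathrm{sh}(T))$ decorated by some rule, and running the induction along the three cases $k=1$, $k=2$ and $k\geq3$ of the definition.
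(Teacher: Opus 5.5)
Your proof is correct and follows essentially the same route as the paper. The paper writes each $T'\in\Supp(T^e)$ as $f_3(T)$ or as $f_i(\tilde T)$ with $i\in\{1,2\}$ and $\tilde T$ a redecoration of $T$, and then combines decoration-independence of $\lin$ with $\lin(f_i(T))<\lin(T)$. You apply decoration-independence on the output side instead, comparing $T'$ directly with $T_a^b$ or $T_b^a$, and you justify that independence through the shape argument, which the paper only asserts.
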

  \begin{proof}
   Let us write $T=B_+^{n_1}\circ\cdots\circ B_+^{n_p}(B_+^{n_a}(F_a)B_+^{n_b}(F_b)F)$ as in definition~\ref{defi:error_tree}. For any ${T'\in\Supp(T^e)}$ we have either $T'=f_3(T)$ or $T'=f_i(\tilde T)$ for $i\in\{1,2\}$ with $\tilde T$ a tree differing by $T$ only by its decorations. The result follows from the facts that $\lin(T)$ does not depend on the decorations of $T$ and that for any $i\in\IEM{1}{3}, \lin(f_i(T))<\lin(T)$ by definition of $\lin$. 
  \end{proof}

This construction allows us to prove the next important result.
\begin{Prop} \label{Prop:phi_bij}
  The map $\phi : \calPTN\longrightarrow\calPTN$ is a bijection.
\end{Prop}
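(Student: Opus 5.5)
The plan is to show that $\phi$ is unitriangular with respect to a filtration of $\calPTN$ by the counter $\lin$. A unitriangular map is then invertible by the usual inductive (or Neumann series) argument.

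For $N\in\N$, let $\calPTN^{(N)}$ be the span of the trees $T\in\PTN$ with $\lin(T)\leq N$. Since $\lin$ does not depend on decorations, this gives an increasing, exhaustive filtration. We also have $\calPTN^{(-1)}=0$.

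The key step is to prove that for every $T\in\PTN$,
\[
\phi(T)-T\in\calPTN^{(\lin(T)-1)}.
\]
By definition, $\phi(T)-T=\sum_{v\in V(\pr(T))}(\delta_{\pr(T)}(v))^e$. Each decoration $\delta_{\pr(T)}(v)$ is either $T$ itself (at the root) or is obtained from $T$ by iterating the operations $S\mapsto S_{a+b},S_b^a,S_a^b$, i.e. by applying $f_3,f_1,f_2$. In the first case lemma~\ref{lem:lin_error_term} applies directly. In the second, lemma~\ref{lem:lin_decreases} gives $\lin(\delta_{\pr(T)}(v))\leq\lin(T)$, and in fact strictly less for non-root vertices.

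If the decoration is a ladder, its error term is $0$. Otherwise, lemma~\ref{lem:lin_error_term} says every tree in the support of $(\delta_{\pr(T)}(v))^e$ has $\lin$ strictly less than $\lin(\delta_{\pr(T)}(v))\leq\lin(T)$. To make "descendant decorations are images of $T$ under compositions of the $f_i$" rigorous, I will do an induction on $\lin$ following definition~\ref{defi:process_tree}, using $\pr(T)=B_+^T(\pr(T_{a+b})\pr(T_b^a)\pr(T_a^b))$. The only care needed is checking that $T_b^a=f_1(T)$, $T_a^b=f_2(T)$ and $T_{a+b}=f_3(T)$, which holds by definition because $f_i$ descends through the trunk and acts at the first branching.

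\textbf{Injectivity.} Suppose $\Sigma=\sum c_T T\neq0$ and let $N$ be the maximal $\lin$ among the trees in its support. Then the component of $\phi(\Sigma)$ on trees with $\lin=N$ equals that of $\Sigma$, so it is nonzero.

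\textbf{Surjectivity.} I argue by induction on $N$ that $\calPTN^{(N)}\subseteq\phi(\calPTN^{(N)})$. For $N=0$ every tree is a ladder, so $\phi$ is the identity there. For a tree $T$ with $\lin(T)=N$ we have $T=\phi(T)-(\phi(T)-T)$, and the bracket lies in $\calPTN^{(N-1)}=\phi(\calPTN^{(N-1)})$ by the induction hypothesis. Moreover $\phi$ preserves each $\calPTN^{(N)}$ by the key step.

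The filtration pieces are infinite-dimensional, but this causes no trouble since every element is a finite sum. One could also write $\phi^{-1}=\sum_{j\ge0}(\mathrm{id}-\phi)^j$, which is locally finite because $\mathrm{id}-\phi$ strictly lowers the filtration degree.

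The main obstacle is the bookkeeping in the key step: confirming that every decoration of $\pr(T)$ has $\lin$ at most $\lin(T)$, and that error terms strictly lower it. Lemmas~\ref{lem:lin_decreases} and~\ref{lem:lin_error_term} do essentially all of this work.
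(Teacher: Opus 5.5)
Your proof is correct and follows essentially the same route as the paper's: filter $\calPTN$ by $\lin$, observe that $\phi$ is unitriangular with respect to this filtration, and argue by induction on the filtration degree. Your version is in fact more careful in two places. You bound the error terms of the non-root decorations of $\pr(T)$ using lemma~\ref{lem:lin_decreases}, whereas the paper cites only lemma~\ref{lem:lin_error_term}, which concerns $T^e$ alone. You also prove injectivity by a leading-term argument, whereas the paper's uniqueness step relies on uniqueness of preimages beyond the subspace controlled by its induction hypothesis.
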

\begin{proof}
For any $n\in\N, k\in\N$, let us define:
\begin{equation*}
	\PTN^n\coloneqq \left\{t\in\PTN \,\middle|\, \lin(t)=n\right\},\quad \calPTN=\bigoplus_{n=0}^{\infty} \K\PTN^n,\quad\calPTN^{n\leq k}\coloneqq\bigoplus_{n=0}^k \K\PTN^n.
\end{equation*} 
By linearity of $\phi$, it is enough to show
that $\phi$ is a bijection on $\calPTN^{n\leq k}$ for all $k\in \N$. We proceed by induction over $k$.
\begin{description}
	\item[Initialisation:] for $k=0$ the result is obvious as $\phi$ is the identity for ladder trees. 
	\item[Heredity:] let us suppose there exists $k\in\N$ such that $\phi$ is a bijection over $\calPTN^{n\leq k}$. Let us show it is a bijection over $\calPTN^{n \leq k+1}$. For this purpose, one just needs to show for any $T\in\PTN^{k+1}$ has a unique preimage by $\phi$. Let $T$ be any tree in $\PTN^{k+1}$. Then, by lemma~\ref{lem:lin_error_term}:
	\[
	\phi(T)= T + T' \text{ where } T'\in\calPTN^{n\leq k}.
	\]
	By hypothesis, $T'$ has a unique preimage $S'$ by $\phi$. Then by linearity of $\phi$
	$\phi(T-S')=T.$
	So we conclude that $T\in\ima(\phi)$. Moreover, let $S''$ be any preimage of $T$. Then 
	\begin{equation*}
	 \phi(T)=T+T'=\phi(S'')+T'\Longrightarrow T'=\phi(T-S'').
	\end{equation*}
	By unicity of the preimage of $T'$, this implies $T-S''=S'$ if and only if $ S''=T-S'$. Thus the preimage of $T$ is unique and $\phi$ is a bijection over $\calPTN^{n\leq k+1}$.
%
	\end{description}
	As a consequence, $\phi$ is an isomorphism.
		\end{proof}
We can now prove the branched version of Hoffman's relations after a few technical lemmas.
\begin{Lemme} \label{lem:im_spt_y}
 For any tree $T\in\calPTN$, $\fraksPT(T)\shuffle^T\tdun{y}$ lies in $\ima(\fraksPT)$.
\end{Lemme}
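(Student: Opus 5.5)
The plan is to reduce the statement to a property of $\fraks^{PN}$ alone, using proposition~\ref{Prop:phi_bij}. Since $\fraksPT=\fraks^{PN}\circ\phi$ and $\phi$ is a bijection of $\calPTN$, we have $\ima(\fraksPT)=\ima(\fraks^{PN})$. Moreover $\phi(T)\in\calPTN$, so $\fraksPT(T)=\fraks^{PN}(\phi(T))$ lies in $\ima(\fraks^{PN})$. By bilinearity of $\shuffle^T$ it therefore suffices to prove the following: for every basis tree $S\in\PTN$, the element $\fraks^{PN}(S)\shuffle^T\tdun{y}$ lies in $\ima(\fraks^{PN})$.

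\textbf{Step 1: describe $\ima(\fraks^{PN})$.} I will show it is the linear span of the set $\mathcal{B}$ of $\{x,y\}$-decorated planar trees in which every vertex decorated by $x$ has exactly one child. In particular, every leaf of such a tree is decorated by $y$.
\begin{itemize}
\item By the inductive definition~\ref{defi:old_bin_map}, $\fraks^{PN}(B_+^n(F))$ is a chain of $n-1$ vertices decorated by $x$, followed by a vertex decorated by $y$ onto which the forest $\fraks^{PN}(F)$ is grafted. Hence $\fraks^{PN}$ sends basis trees to single trees in $\mathcal{B}$.
\item Conversely, a tree in $\mathcal{B}$ can be uniquely decomposed by reading maximal runs of $x$'s ending at a $y$. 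Contracting each such run to a single vertex decorated by its length gives, by induction on the number of vertices, a unique preimage tree in $\PTN$.
\end{itemize}
So $\fraks^{PN}$ restricts to a bijection $\PTN\to\mathcal{B}$, and $\ima(\fraks^{PN})=\K\mathcal{B}$.

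\textbf{Step 2: compute the product with $\tdun{y}$.} Take $t\in\mathcal{B}$ and $s=\tdun{y}$. The leftmost branch of $s$ has one node, so $l=0$. By theorem~\ref{thm:comb}, $t\shuffle^T\tdun{y}=\sum_{\sigma\in\Sh(k+1,1)}\sigma(t,\tdun{y})$. Each term is obtained by inserting a new vertex decorated by $y$ into the rightmost branch $\omega_1,\dots,\omega_{k+1}$ of $t$, at one of the $k+2$ possible positions. The forests $F_i$ stay grafted as left sons of their original nodes, and the new $y$-vertex carries the empty forest $F_{k+2}$.

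\textbf{Step 3: check each term lies in $\mathcal{B}$.} This is the main point; it is elementary but needs care with the comb conventions. There are three kinds of vertices to examine.
\begin{itemize}
\item Let $\omega_j$ with $j\le k$ be decorated by $x$. Since $t\in\mathcal{B}$, it has exactly one child, namely $\omega_{j+1}$, so $F_j$ is empty. After the insertion its only child is either $\omega_{j+1}$ or the new $y$-vertex, so it still has exactly one child.
\item The top node $\omega_{k+1}$ is a leaf of $t$, hence decorated by $y$. It may acquire a child, which is harmless because $y$-vertices are unconstrained.
\item All vertices off the rightmost branch are untouched, and the new vertex is decorated by $y$.
\end{itemize}
Hence every term of the sum lies in $\mathcal{B}$, so $t\shuffle^T\tdun{y}\in\K\mathcal{B}=\ima(\fraks^{PN})=\ima(\fraksPT)$. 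By linearity this holds for $t$ replaced by $\fraksPT(T)$, which is the claim.

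An explicit preimage is also available. Write the result as $\fraks^{PN}(U)$ with $U$ obtained from Step 1; then $\phi^{-1}(U)$ is a preimage under $\fraksPT$, using proposition~\ref{Prop:phi_bij}.
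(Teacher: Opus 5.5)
Your proposal is correct and follows essentially the paper's route. You identify $\ima(\fraksPT)=\ima(\fraks^{PN})$, using bijectivity of $\phi$, with the span of $\{x,y\}$-trees in which every $x$-vertex has exactly one child; your $\mathcal{B}$ is the paper's $\calPTxy^{\rm strict}$. You then use the comb description of theorem~\ref{thm:comb} to show that the product with $\tdun{y}$ stays in that span.

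There are two minor differences. The paper proves the stronger statement that $\calPTxy^{\rm strict}$ is a $\shuffle^T$-subalgebra, whereas you only check the product with $\tdun{y}$, which is all that is needed. Your vertex-by-vertex check also explicitly handles the former top leaf $\omega_{k+1}$ gaining a child, a case the paper's phrase ``same number of direct descendants'' glosses over.
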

\begin{proof}
 Recall that for any tree $T\in\calPTxy$, a vertex $v\in V(T)$ is called a branching vertex if it has at least two direct descendants. Let $\calPTxy^{\rm strict}$ be the subvector space of $\calPTxy$ generated by trees whose leaves and branching vertices are decorated by $y$. Then one easily show (see~\cite[Lemma A.3]{clavier2020double}) that $\fraks^{PN}$ is a bijection between $\calPTN$ and $\calPTxy^{\rm strict}$. Thus, by proposition \ref{Prop:phi_bij}, $\fraks^{PT}=\fraks^{PN}\circ\phi$ is also a bijection between $\calPTN$ and $\calPTxy^{\rm strict}$.
 
 Furthermore, by definition of $\shuffle^T$, we can write $V(t'')=V(t)\sqcup V(t')$ for any tree $t''$ in $\mathrm{Supp}(t\shuffle^T t')$. Then the comb representation of $\shuffle^T$ (theorem~\ref{thm:comb}
 ) implies that each internal (i.e. non leaf) vertex of $t''$ has the same number of direct descendant it had in $t$ or $t'$. Thus $\calPTxy^{\rm strict}$ is a subalgebra of $\left( \calPTxy, \shuffle^T, \emptyset\right)$.
 
 Since $\tdun{y}=\fraks^{PT}(\tdun{1})\in\calPTxy^{\rm strict}$, we have $t\shuffle^T\tdun{y}\in\calPTxy^{\rm strict}$ for any tree $t\in\calPTxy^{\rm strict}$. Then the lemma follows from the fact
 that $\fraks^{PT}$ is a bijection between $\calPTN$ and $\calPTxy^{\rm strict}$.
\end{proof}


We already proved in the previous lemma that $\fraks^{PT}:\calPTN\longrightarrow\calPTxy^{\rm strict}$ is a bijection, thus we have a map $(\fraks^{PT})^{-1}:\calPTxy^{\rm strict}\longrightarrow\calPTN$. Then we have the following technical result.
\begin{Lemme}\label{Lem:Hoffman_step}
     For any tree in $T\in\calPTxy^{\rm strict}$, 
we have
     \begin{equation} \label{eq:trivial_change}
      \flaten_1\circ(\fraks^{PT})^{-1}(T) = \fraks^{-1}\circ\flaten_0(T).
     \end{equation}
    \end{Lemme}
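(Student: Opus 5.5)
The identity follows from the commutative square of theorem~\ref{thm F.} once we substitute $T=\fraks^{PT}(S)$. So we fix $T\in\calPTxy^{\rm strict}$ and set $S\coloneqq(\fraks^{PT})^{-1}(T)\in\calPTN$. This is well defined because the proof of lemma~\ref{lem:im_spt_y} shows that $\fraks^{PT}=\fraks^{PN}\circ\phi$ is a bijection $\calPTN\to\calPTxy^{\rm strict}$; that in turn rests on proposition~\ref{Prop:phi_bij} and on $\fraks^{PN}$ being a bijection onto $\calPTxy^{\rm strict}$. With this notation, equation~\eqref{eq:trivial_change} reads
\[
\flaten_1(S)=\fraks^{-1}\bigl(\flaten_0\circ\fraks^{PT}(S)\bigr).
\]

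\textbf{Step 1.} By the commutativity of diagram~\eqref{eq:Ku-Yu_main_result},
\[
\flaten_0\circ\fraks^{PT}(S)=\fraks\circ\flaten_1(S).
\]

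\textbf{Step 2.} We apply $\fraks^{-1}$ to both sides. The map $\fraks:\W\to\Wxy$ is injective: on basis words it sends $n_1\cdots n_p$ to the word $x^{n_1-1}y\cdots x^{n_p-1}y$. Distinct words of $\W$ give distinct words of $\Wxy$, since one recovers $n_1,\dots,n_p$ by cutting after each $y$. Hence $\fraks$ maps a basis injectively to a basis, and it is a linear isomorphism onto the subspace $\calW_{\{x,y\}}^{y}$ spanned by the empty word and by words ending in $y$. Consequently $\fraks^{-1}$ is well defined on $\fraks\circ\flaten_1(S)$, and $\fraks^{-1}\circ\fraks\circ\flaten_1(S)=\flaten_1(S)$, which is the claim.

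\textbf{Main obstacle.} The only real care is to check that the expression $\fraks^{-1}\circ\flaten_0(T)$ in the statement makes sense, that is, $\flaten_0(T)\in\calW_{\{x,y\}}^{y}$ for $T\in\calPTxy^{\rm strict}$. Step 1 already gives this, since $\flaten_0(T)=\fraks(\flaten_1(S))$ lies in the image of $\fraks$. It can also be seen directly: every leaf of a tree in $\calPTxy^{\rm strict}$ is decorated by $y$, and every word in $\flaten_0$ of a tree ends with a leaf decoration. This follows by induction from definition~\ref{defi:flat_map}, because shuffles of words ending in $y$ again end in $y$. With this in place, the rest is linearity and the two results above.
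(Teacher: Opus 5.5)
Your proof is correct and follows the paper's argument: take the preimage $S=(\fraks^{PT})^{-1}(T)$, use the commutativity $\flaten_0\circ\fraks^{PT}=\fraks\circ\flaten_1$ from theorem~\ref{thm F.}, and apply $\fraks^{-1}$. Your extra checks, that $\fraks$ is injective and that $\flaten_0(T)$ lies in the image of $\fraks$ (so that $\fraks^{-1}\circ\flaten_0(T)$ makes sense), are details the paper leaves implicit.
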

    \begin{proof}
For any $T\in\calPTxy^{\rm strict}=\ima(\fraks^{PT})$, let $T'\in\calPTN$ be its preimage: $T'=(\fraks^{PT})^{-1}(T)$. The commutativity of diagram~\eqref{eq:Ku-Yu_main_result} gives
     \begin{equation*}
      \fraks\circ\flaten_1(T') = \flaten_0\circ\fraks^{PT}(T')=\flaten_0(T)
     \end{equation*}
     where the last equality is obtained by definition of $T'$. Applying $\fraks^{-1}$ to both sides and using again that $T'=(\fraks^{PT})^{-1}(T)$ gives equation \eqref{eq:trivial_change}.
    \end{proof}    
We need one last technical result, whose statement makes sense due to lemma \ref{lem:im_spt_y}.
\begin{Lemme}\label{Lem:Hoffman_conv}
 For any convergent tree $T\in\calPTNconv$, 
 \begin{equation*}
  T\cshuffle^T \tdun{1} - (\fraksPT)^{-1}(\fraksPT(T)\shuffle^T\tdun{y})\in\calPTNconv.
 \end{equation*}
\end{Lemme}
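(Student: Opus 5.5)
The plan is to show that, in the difference, every tree whose root is decorated by $1$ cancels, and that all remaining trees have roots decorated by integers $\geq 2$. By linearity it is enough to treat a single tree $T\in\PTN$ whose root is decorated by some $n\geq 2$.

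\textbf{Step 1: the quasi-shuffle side.} By definition~\ref{defi:quasishuffle_trees} we have $T\cshuffle^T\tdun{1}=T<\tdun{1}+T>\tdun{1}+T\bullet\tdun{1}$.
\begin{itemize}
\item $T<\tdun{1}$ keeps the root of $T$, decorated by $n$.
\item $T\bullet\tdun{1}$ has root decorated by $n+1$.
\item $T>\tdun{1}=B_+^1(T)$ is the only non-convergent term.
\end{itemize}

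\textbf{Step 2: the shuffle side.} Similarly, $\fraksPT(T)\shuffle^T\tdun{y}=\fraksPT(T)<\tdun{y}+B_+^y(\fraksPT(T))$. I will first prove that $(\fraksPT)^{-1}\bigl(B_+^y(\fraksPT(T))\bigr)=B_+^1(T)$, so that this term cancels the bad term of Step~1.

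\begin{itemize}
\item Directly from definition~\ref{defi:error_tree}, $(B_+^m S)^e=B_+^m(S^e)$: the minimal incomparable pair of $B_+^m(S)$ is that of $S$, with one extra $B_+^m$ prefixed.
\item Combining this with lemma~\ref{lem:pr_B_plus} gives $\phi(B_+^m(S))=B_+^m(\phi(S))$.
\item By definition~\ref{defi:old_bin_map}, $\fraks^{PN}(B_+^1(S))=B_+^y(\fraks^{PN}(S))$.
\item Hence $\fraksPT(B_+^1(T))=B_+^y(\fraksPT(T))$, and injectivity (proposition~\ref{Prop:phi_bij} together with lemma~\ref{lem:im_spt_y}) gives the claim.
\end{itemize}

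\textbf{Step 3: the remaining term.} It remains to show that $(\fraksPT)^{-1}\bigl(\fraksPT(T)<\tdun{y}\bigr)$ is convergent. The element $\fraksPT(T)<\tdun{y}$ lies in $\calPTxy^{\rm strict}$, by the argument of lemma~\ref{lem:im_spt_y}. The $<$ product keeps the root of the left factor. Moreover $\fraks^{PN}$ sends a tree with root decoration $m$ to one with root $x$ exactly when $m\geq 2$, and $\phi$ preserves root decorations (as in the proof of corollary~\ref{coro:arbo_Konts}). Therefore every tree in $\fraksPT(T)<\tdun{y}$ has root $x$.

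This is the delicate step: we must show that $(\fraksPT)^{-1}$ sends strict trees with root $x$ to combinations of trees with root decoration $\geq2$.
\begin{itemize}
\item $(\fraks^{PN})^{-1}$ clearly does this, since on $\calPTxy^{\rm strict}$ it is the inverse of a map that sends root decoration $m\geq2$ to root $x$ and $m=1$ to root $y$.
\item For $\phi^{-1}$, I will rerun the induction of proposition~\ref{Prop:phi_bij} inside the subspace spanned by trees of a fixed root decoration. There $\phi(T)=T+T'$, where $T'$ has strictly smaller $\lin$ and the same root decoration. So the constructed preimage $T-S'$ stays in that subspace, i.e.\ $\phi^{-1}$ preserves root decorations.
\end{itemize}

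\textbf{Conclusion.} Collecting the three steps, the difference equals
\begin{equation*}
T<\tdun{1}+T\bullet\tdun{1}-(\fraksPT)^{-1}\bigl(\fraksPT(T)<\tdun{y}\bigr),
\end{equation*}
and each of these three terms lies in $\calPTNconv$.
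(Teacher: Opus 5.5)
Your proposal is correct and follows essentially the same route as the paper. Both isolate the only non-convergent term $B_+^1(T)=T>\tdun{1}$ of $T\cshuffle^T\tdun{1}$ and cancel it against $(\fraksPT)^{-1}\bigl(B_+^y(\fraksPT(T))\bigr)=B_+^1(T)$ via lemma~\ref{lem:pr_B_plus}; your Step~3 additionally makes explicit a point the paper passes over when it asserts $C''\in\calPTNconv$, namely that $\phi^{-1}$, and hence $(\fraksPT)^{-1}$, preserves root decorations, so strict trees with root $x$ are sent into $\calPTNconv$.
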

\begin{proof}
By theorem~\ref{thm:comb} for $\cshuffle^T$, for any ${T\in\PTN}$ convergent, we have $T\cshuffle^T \tdun{1}=B_+^1(T)+C$ with $C\in\calPTNconv$. Furthermore, we have by construction $\fraks^{PN}(\calPTNconv)\subseteq\calPTxyconv$. Recall also that, for any non-ladder tree $T\in\PTN$, every vertex of its
process tree $\pr(T)$ (definition~\ref{defi:process_tree}) is decorated by a tree whose root has the same decoration as the root of $T$. Therefore, if $T\in\calPTNconv$ we have $\pr(T)\in\calPT_{\calPTNconv}$. Moreover, note that for any tree $T$, the roots of the trees in the support of its error term $T^e$ (definition~\ref{defi:error_tree}) have the same decoration than the root of $T$. Thus if $T\in\calPTNconv$, we have $T^e\in\calPTNconv$. Therefore we also have $\phi(\calPTNconv)\subseteq\calPTNconv$.

We have shown that $\fraksPT(\calPTNconv)\subseteq\calPTxyconv$. So, by definition of $\shuffle^T$ for any $T\in\calPTNconv$ we obtain $\fraksPT(T)\shuffle^T\tdun{y}=B_+^y(\fraksPT(T))+C'$ with $C'\in\calPTxyconv$.
By lemma \ref{lem:pr_B_plus} we have $\fraksPT(B_+^1(T))=B_+^y(\fraksPT(T))$, and so $(\fraksPT)^{-1}(\fraksPT(T)\shuffle^T\tdun{y})= B_+^1(T)+C''$ with $C''\in\calPTNconv$. Thus as claimed
\begin{equation*}
  T\cshuffle^T \tdun{1} - (\fraksPT)^{-1}(\fraksPT(T)\shuffle^T\tdun{y})=C-C''\in \calPTNconv \qedhere
 \end{equation*}

%
%
\end{proof}

We can now prove the arborified version of Hoffman's regularisation relation~\eqref{eq:Hoffman}.
\begin{thm} \label{thm:arbo_Hoffman}
 For any convergent tree $T\in\calPTNconv$,
 \begin{equation*}
  T\cshuffle^T \tdun{1} - (\fraksPT)^{-1}(\fraksPT(T)\shuffle^T\tdun{y})\in{\rm Ker}(\zetaT).
 \end{equation*}
\end{thm}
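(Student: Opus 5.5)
The plan is to push everything down to words by applying $\flaten_1$, and then invoke the classical Hoffman relation~\eqref{eq:Hoffman}. By lemma~\ref{Lem:Hoffman_conv}, the element
\[
X:=T\cshuffle^T \tdun{1} - (\fraksPT)^{-1}(\fraksPT(T)\shuffle^T\tdun{y})
\]
lies in $\calPTNconv$, so $\zetaT(X)$ is defined. By theorem~\ref{thm:AZV_flaten}, $\zetaT(X)=\zeta(\flaten_1(X))$. It therefore suffices to identify $\flaten_1(X)$ with the word-level Hoffman element $w\cshuffle(1)-\fraks^{-1}(\fraks(w)\shuffle(y))$, where $w:=\flaten_1(T)\in\Wcv$.

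First term. Lemma~\ref{lemma:flat_dend_morph} (tridendriform case, with $\Omega=(\N^*,+)$ commutative) shows that $\flaten_1$ intertwines $\cshuffle^T$ and $\cshuffle$. Hence
\[
\flaten_1(T\cshuffle^T\tdun{1})=\flaten_1(T)\cshuffle(1)=w\cshuffle(1).
\]

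Second term. By lemma~\ref{lem:im_spt_y} and its proof, $\fraksPT(T)\shuffle^T\tdun{y}$ lies in $\calPTxy^{\rm strict}$. We can then apply lemma~\ref{Lem:Hoffman_step} to it:
\[
\flaten_1\circ(\fraksPT)^{-1}\big(\fraksPT(T)\shuffle^T\tdun{y}\big)=\fraks^{-1}\circ\flaten_0\big(\fraksPT(T)\shuffle^T\tdun{y}\big).
\]
Since $\flaten_0$ is a dendriform morphism, hence a $\shuffle$-morphism, by lemma~\ref{lemma:flat_dend_morph}, the right-hand side equals $\fraks^{-1}\big(\flaten_0(\fraksPT(T))\shuffle(y)\big)$. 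By theorem~\ref{thm F.}, $\flaten_0\circ\fraksPT=\fraks\circ\flaten_1$, so this is $\fraks^{-1}\big(\fraks(w)\shuffle(y)\big)$.

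Subtracting the two terms gives $\flaten_1(X)=w\cshuffle(1)-\fraks^{-1}(\fraks(w)\shuffle(y))$. Because $T$ is convergent, $w\in\Wcv$ by theorem~\ref{thm:AZV_flaten}. Hoffman's relation~\eqref{eq:Hoffman} then gives $\zeta(\flaten_1(X))=0$, and hence $\zetaT(X)=0$.

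The main point requiring care is the legitimacy of $\fraks^{-1}$ on $\fraks(w)\shuffle(y)$. The map $\fraks$ is injective on $\W$, with image the words ending in $y$ (or empty). The word $\fraks(w)\shuffle(y)$ is a combination of words ending in $y$, so it lies in that image. The identity of lemma~\ref{Lem:Hoffman_step} should be read with this inverse. I expect no further obstacle, since convergence was already handled in lemma~\ref{Lem:Hoffman_conv}.
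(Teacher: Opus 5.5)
Your proposal is correct and follows essentially the same route as the paper's proof: lemma~\ref{Lem:Hoffman_conv} makes $\zetaT$ applicable, theorem~\ref{thm:AZV_flaten} passes to words, lemma~\ref{lemma:flat_dend_morph}, equation~\eqref{eq:trivial_change} and theorem~\ref{thm F.} handle the two terms, and Hoffman's relation~\eqref{eq:Hoffman} concludes. Your extra check that $\fraks(w)\shuffle(y)$ lies in the image of the injective map $\fraks$ makes explicit a point the paper leaves implicit.
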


\begin{proof}
  Thanks to the previous lemma~\ref{Lem:Hoffman_conv}, for any convergent trees $T\in\calPTNconv$, we can evaluate $\zeta^T$ on $T\cshuffle^T \tdun{1} - (\fraksPT)^{-1}(\fraksPT(T)\shuffle^T\tdun{y})$. Doing so we have (in the computation below, $(\alpha)$ is the word containing only the letter $\alpha$ 
  ):

 \begin{align*}
  & \zetaT\left(T\cshuffle^T \tdun{1} - (\fraksPT)^{-1}(\fraksPT(T)\shuffle^T\tdun{y})\right) \\
  = & \zeta\left(\flaten_1\left(T\cshuffle^T \tdun{1} - (\fraksPT)^{-1}(\fraksPT(T)\shuffle^T\tdun{y})\right)\right) \quad\text{by theorem }\ref{thm:AZV_flaten}\\
  = & \zeta\left(\flaten_1(T\cshuffle^T \tdun{1}) - \flaten_1\circ (\fraksPT)^{-1}(\fraksPT(T)\shuffle^T\tdun{y})\right)\quad\text{by linearity of }\flaten_1 \\
  = & \zeta\left(\flaten_1(T\cshuffle^T \tdun{1}) - \fraks^{-1}\left(\flaten_0(\fraksPT(T)\shuffle^T \tdun{y})\right)\right) \quad\text{by equation }\eqref{eq:trivial_change} \\
  = & \zeta\left(\flaten_1(T)\cshuffle (1) - \fraks^{-1}\left(\flaten_0(\fraksPT(T))\shuffle(y)\right)\right) 
  \quad\text{by lemma }\ref{lemma:flat_dend_morph}\\
  = & \zeta\left(\flaten_1(T)\cshuffle (1) - \fraks^{-1}\left(\fraks(\flaten_1(T))\shuffle(y)\right)\right) \quad\text{by theorem }\ref{thm F.} \\
  = & 0 \quad \text{ by Hoffman's relation \eqref{eq:Hoffman} } \qedhere
 \end{align*}
\end{proof}
Thus the last of the properties of \MZVs{} that (conjecturally) generates every rational relation among them has been lifted to rooted trees. This fulfils the research project that was started by Manchon~\cite{Manchon_16}.

\begin{Egs}
Let us give an example of a relation between \AZVs{} given by this theorem.
\begingroup%
\allowdisplaybreaks%
  \begin{align*}
    &
    \begin{xy}
      {(-4,2) \ar @{{*}-{*}} (0,-2)},
      {(4,2) \ar @{{*}-{*}} (0,-2)},
      (-1,2)*+{1},
      (7,2)*+{1},
      (3,-2)*+{2}
    \end{xy} \cshuffle^T
    \begin{xy}
      {(0,0) \ar @{{*}-{*}} (0,0)},
      (3,0)*+{1}
    \end{xy} - (\fraksPT)^{-1}\left(\fraksPT\left(
    \begin{xy}
      {(-4,2) \ar @{{*}-{*}} (0,-2)},
      {(4,2) \ar @{{*}-{*}} (0,-2)},
      (-1,2)*+{1},
      (7,2)*+{1},
      (3,-2)*+{2}
    \end{xy} \right)\shuffle^T
    \begin{xy}
      {(0,0) \ar @{{*}-{*}} (0,0)},
      (3,0)*+{y}
    \end{xy} \right)\\
    = & 
    \begin{xy}
      {(-4,2) \ar @{{*}-{*}} (0,-2)},
      {(4,2) \ar @{{*}-{*}} (0,-2)},
      (-1,2)*+{1},
      (7,2)*+{1},
      (3,-2)*+{2}
    \end{xy} \cshuffle^T
    \begin{xy}
      {(0,0) \ar @{{*}-{*}} (0,0)},
      (3,0)*+{1}
    \end{xy} - (\fraksPT)^{-1}\left( \left(
    \begin{xy}
      {(0,0) \ar @{{*}-{*}} (0,-4)},
      {(-4,4) \ar @{{*}-{*}} (0,0)},
      {(4,4) \ar @{{*}-{*}} (0,0)},
      (3,-4)*+{x},
      (-1,4)*+{y},
      (7,4)*+{y},
      (3,0)*+{y}
    \end{xy} +
    \begin{xy}
      {(0,-6) \ar @{{*}-{*}} (0,-2)},
      {(0,-2) \ar @{{*}-{*}} (0,2)},
      {(0,2) \ar @{{*}-{*}} (0,6)},
      (3,-6)*+{x},
      (3,-2)*+{y},
      (3,2)*+{x},
      (3,6)*+{y}
    \end{xy} \right) \shuffle^T
    \begin{xy}
      {(0,0) \ar @{{*}-{*}} (0,0)},
      (3,0)*+{y}
    \end{xy} \right) \\
    = & 
    \begin{xy}
      {(0,0) \ar @{{*}-{*}} (0,-4)},
      {(-4,4) \ar @{{*}-{*}} (0,0)},
      {(4,4) \ar @{{*}-{*}} (0,0)},
      (3,-4)*+{1},
      (-1,4)*+{1},
      (7,4)*+{1},
      (3,0)*+{2}
    \end{xy} +
    \begin{xy}
      {(-4,2) \ar @{{*}-{*}} (0,-2)},
      {(4,2) \ar @{{*}-{*}} (0,-2)},
      (-1,2)*+{1},
      (7,2)*+{1},
      (3,-2)*+{3}
    \end{xy} +
    \begin{xy}
      {(4,0) \ar @{{*}-{*}} (4,4)},
      {(-4,0) \ar @{{*}-{*}} (0,-4)},
      {(4,0) \ar @{{*}-{*}} (0,-4)},
      (-1,0)*+{1},
      (7,0)*+{1},
      (7,4)*+{1},
      (3,-4)*+{2}
    \end{xy} +
    \begin{xy}
      {(-4,2) \ar @{{*}-{*}} (0,-2)},
      {(4,2) \ar @{{*}-{*}} (0,-2)},
      (-1,2)*+{1},
      (7,2)*+{2},
      (3,-2)*+{2}
    \end{xy} +
    \begin{xy}
      {(4,0) \ar @{{*}-{*}} (4,4)},
      {(-4,0) \ar @{{*}-{*}} (0,-4)},
      {(4,0) \ar @{{*}-{*}} (0,-4)},
      (-1,0)*+{1},
      (7,0)*+{1},
      (7,4)*+{1},
      (3,-4)*+{2}
    \end{xy} - (\fraksPT)^{-1}\left(
    \begin{xy}
      {(0,-6) \ar @{{*}-{*}} (0,-2)},
      {(0,2) \ar @{{*}-{*}} (0,-2)},
      {(-4,6) \ar @{{*}-{*}} (0,2)},
      {(4,6) \ar @{{*}-{*}} (0,2)},
      (3,-6)*+{y},
      (3,-2)*+{x},
      (-1,6)*+{y},
      (7,6)*+{y},
      (3,2)*+{y}
    \end{xy} +
    \begin{xy}
      {(0,-8) \ar @{{*}-{*}} (0,-4)},
      {(0,-4) \ar @{{*}-{*}} (0,0)},
      {(0,0) \ar @{{*}-{*}} (0,4)},
      {(0,4) \ar @{{*}-{*}} (0,8)},
      (3,-8)*+{y},
      (3,-4)*+{x},
      (3,0)*+{y},
      (3,4)*+{x},
      (3,8)*+{y}
    \end{xy} \right) + (\fraksPT)^{-1}\left(
    \begin{xy}
      {(0,-8) \ar @{{*}-{*}} (0,-4)},
      {(0,-4) \ar @{{*}-{*}} (0,0)},
      {(0,0) \ar @{{*}-{*}} (0,4)},
      {(0,4) \ar @{{*}-{*}} (0,8)},
      (3,-8)*+{x},
      (3,-4)*+{y},
      (3,0)*+{y},
      (3,4)*+{x},
      (3,8)*+{y}
    \end{xy} \right) \\
    & - (\fraksPT)^{-1}\left(
    \begin{xy}
      {(0,-6) \ar @{{*}-{*}} (0,-2)},
      {(0,2) \ar @{{*}-{*}} (0,-2)},
      {(-4,6) \ar @{{*}-{*}} (0,2)},
      {(4,6) \ar @{{*}-{*}} (0,2)},
      (3,-6)*+{x},
      (3,-2)*+{y},
      (-1,6)*+{y},
      (7,6)*+{y},
      (3,2)*+{y}
    \end{xy} +
    \begin{xy}
      {(0,-8) \ar @{{*}-{*}} (0,-4)},
      {(0,-4) \ar @{{*}-{*}} (0,0)},
      {(0,0) \ar @{{*}-{*}} (0,4)},
      {(0,4) \ar @{{*}-{*}} (0,8)},
      (3,-8)*+{x},
      (3,-4)*+{y},
      (3,0)*+{y},
      (3,4)*+{x},
      (3,8)*+{y}
    \end{xy} \right) - (\fraksPT)^{-1}\left(
    \begin{xy}
      {(0,-6) \ar @{{*}-{*}} (0,-2)},
      {(4,2) \ar @{{*}-{*}} (4,6)},
      {(-4,2) \ar @{{*}-{*}} (0,-2)},
      {(4,2) \ar @{{*}-{*}} (0,-2)},
      (3,-6)*+{x},
      (3,-2)*+{y},
      (-1,2)*+{y},
      (7,2)*+{y},
      (7,6)*+{y}
    \end{xy} +
    \begin{xy}
      {(0,-8) \ar @{{*}-{*}} (0,-4)},
      {(0,-4) \ar @{{*}-{*}} (0,0)},
      {(0,0) \ar @{{*}-{*}} (0,4)},
      {(0,4) \ar @{{*}-{*}} (0,8)},
      (3,-8)*+{x},
      (3,-4)*+{y},
      (3,0)*+{y},
      (3,4)*+{x},
      (3,8)*+{y}
    \end{xy} +
    \begin{xy}
      {(0,-8) \ar @{{*}-{*}} (0,-4)},
      {(0,-4) \ar @{{*}-{*}} (0,0)},
      {(0,0) \ar @{{*}-{*}} (0,4)},
      {(0,4) \ar @{{*}-{*}} (0,8)},
      (3,-8)*+{x},
      (3,-4)*+{y},
      (3,0)*+{x},
      (3,4)*+{y},
      (3,8)*+{y}
    \end{xy} \right) - (\fraksPT)^{-1}\left(
    \begin{xy}
      {(0,-6) \ar @{{*}-{*}} (0,-2)},
      {(4,2) \ar @{{*}-{*}} (4,6)},
      {(-4,2) \ar @{{*}-{*}} (0,-2)},
      {(4,2) \ar @{{*}-{*}} (0,-2)},
      (3,-6)*+{x},
      (3,-2)*+{y},
      (-1,2)*+{y},
      (7,2)*+{y},
      (7,6)*+{y}
    \end{xy} +
    \begin{xy}
      {(0,-8) \ar @{{*}-{*}} (0,-4)},
      {(0,-4) \ar @{{*}-{*}} (0,0)},
      {(0,0) \ar @{{*}-{*}} (0,4)},
      {(0,4) \ar @{{*}-{*}} (0,8)},
      (3,-8)*+{x},
      (3,-4)*+{y},
      (3,0)*+{y},
      (3,4)*+{x},
      (3,8)*+{y}
    \end{xy} +
    \begin{xy}
      {(0,-8) \ar @{{*}-{*}} (0,-4)},
      {(0,-4) \ar @{{*}-{*}} (0,0)},
      {(0,0) \ar @{{*}-{*}} (0,4)},
      {(0,4) \ar @{{*}-{*}} (0,8)},
      (3,-8)*+{x},
      (3,-4)*+{y},
      (3,0)*+{x},
      (3,4)*+{y},
      (3,8)*+{y}
    \end{xy} \right)\\
    = & 
    \begin{xy}
      {(0,0) \ar @{{*}-{*}} (0,-4)},
      {(-4,4) \ar @{{*}-{*}} (0,0)},
      {(4,4) \ar @{{*}-{*}} (0,0)},
      (3,-4)*+{1},
      (-1,4)*+{1},
      (7,4)*+{1},
      (3,0)*+{2}
    \end{xy} +
    \begin{xy}
      {(-4,2) \ar @{{*}-{*}} (0,-2)},
      {(4,2) \ar @{{*}-{*}} (0,-2)},
      (-1,2)*+{1},
      (7,2)*+{1},
      (3,-2)*+{3}
    \end{xy} +
    \begin{xy}
      {(4,0) \ar @{{*}-{*}} (4,4)},
      {(-4,0) \ar @{{*}-{*}} (0,-4)},
      {(4,0) \ar @{{*}-{*}} (0,-4)},
      (-1,0)*+{1},
      (7,0)*+{1},
      (7,4)*+{1},
      (3,-4)*+{2}
    \end{xy} +
    \begin{xy}
      {(-4,2) \ar @{{*}-{*}} (0,-2)},
      {(4,2) \ar @{{*}-{*}} (0,-2)},
      (-1,2)*+{1},
      (7,2)*+{2},
      (3,-2)*+{2}
    \end{xy} +
    \begin{xy}
      {(4,0) \ar @{{*}-{*}} (4,4)},
      {(-4,0) \ar @{{*}-{*}} (0,-4)},
      {(4,0) \ar @{{*}-{*}} (0,-4)},
      (-1,0)*+{1},
      (7,0)*+{1},
      (7,4)*+{1},
      (3,-4)*+{2}
    \end{xy} - \left(
    \begin{xy}
      {(0,0) \ar @{{*}-{*}} (0,-4)},
      {(-4,4) \ar @{{*}-{*}} (0,0)},
      {(4,4) \ar @{{*}-{*}} (0,0)},
      (3,-4)*+{1},
      (-1,4)*+{1},
      (7,4)*+{1},
      (3,0)*+{2}
    \end{xy} +
    \begin{xy}
      {(0,0) \ar @{{*}-{*}} (0,-4)},
      {(-4,4) \ar @{{*}-{*}} (0,0)},
      {(4,4) \ar @{{*}-{*}} (0,0)},
      (3,-4)*+{2},
      (-1,4)*+{1},
      (7,4)*+{1},
      (3,0)*+{1}
    \end{xy} +
    \begin{xy}
      {(4,0) \ar @{{*}-{*}} (4,4)},
      {(-4,0) \ar @{{*}-{*}} (0,-4)},
      {(4,0) \ar @{{*}-{*}} (0,-4)},
      (-1,0)*+{1},
      (7,0)*+{1},
      (7,4)*+{1},
      (3,-4)*+{2}
    \end{xy} +
    \begin{xy}
      {(4,0) \ar @{{*}-{*}} (4,4)},
      {(-4,0) \ar @{{*}-{*}} (0,-4)},
      {(4,0) \ar @{{*}-{*}} (0,-4)},
      (-1,0)*+{1},
      (7,0)*+{1},
      (7,4)*+{1},
      (3,-4)*+{2}
    \end{xy} \right)\\
    & +
    \begin{xy}
      {(0,-4) \ar @{{*}-{*}} (0,0)},
      {(0,0) \ar @{{*}-{*}} (0,4)},
      (3,-4)*+{2},
      (3,0)*+{1},
      (3,4)*+{2}
    \end{xy} =
    \begin{xy}
      {(-4,2) \ar @{{*}-{*}} (0,-2)},
      {(4,2) \ar @{{*}-{*}} (0,-2)},
      (-1,2)*+{1},
      (7,2)*+{1},
      (3,-2)*+{3}
    \end{xy} +
    \begin{xy}
      {(-4,2) \ar @{{*}-{*}} (0,-2)},
      {(4,2) \ar @{{*}-{*}} (0,-2)},
      (-1,2)*+{1},
      (7,2)*+{2},
      (3,-2)*+{2}
    \end{xy} -
    \begin{xy}
      {(0,0) \ar @{{*}-{*}} (0,-4)},
      {(-4,4) \ar @{{*}-{*}} (0,0)},
      {(4,4) \ar @{{*}-{*}} (0,0)},
      (3,-4)*+{2},
      (-1,4)*+{1},
      (7,4)*+{1},
      (3,0)*+{1}
    \end{xy} +
    \begin{xy}
      {(0,-4) \ar @{{*}-{*}} (0,0)},
      {(0,0) \ar @{{*}-{*}} (0,4)},
      (3,-4)*+{2},
      (3,0)*+{1},
      (3,4)*+{2}
    \end{xy}
  \end{align*}
  \endgroup%
Then the arborified version of Hoffman’s regularisation relation (theorem~\ref{thm:arbo_Hoffman}) gives
\[
\zeta^T\left(
\begin{xy}
	{(-4,2) \ar @{{*}-{*}} (0,-2)},
	{(4,2) \ar @{{*}-{*}} (0,-2)},
	(-1,2)*+{1},
	(7,2)*+{1},
	(3,-2)*+{3}
\end{xy} +
\begin{xy}
	{(-4,2) \ar @{{*}-{*}} (0,-2)},
	{(4,2) \ar @{{*}-{*}} (0,-2)},
	(-1,2)*+{1},
	(7,2)*+{2},
	(3,-2)*+{2}
\end{xy} -
\begin{xy}
	{(0,0) \ar @{{*}-{*}} (0,-4)},
	{(-4,4) \ar @{{*}-{*}} (0,0)},
	{(4,4) \ar @{{*}-{*}} (0,0)},
	(3,-4)*+{2},
	(-1,4)*+{1},
	(7,4)*+{1},
	(3,0)*+{1}
\end{xy} +
\begin{xy}
	{(0,-4) \ar @{{*}-{*}} (0,0)},
	{(0,0) \ar @{{*}-{*}} (0,4)},
	(3,-4)*+{2},
	(3,0)*+{1},
	(3,4)*+{2}
\end{xy}\right) = 0.
\]

Using theorem~\ref{thm:AZV_flaten} this implies
\[
2\zeta(3,1,1)+\zeta(3,2)+\zeta(2,1,2)+\zeta(2,2,1)+\zeta(2,3)-2\zeta(2,1,1,1)= 0.
\]
which one can straightforwardly check to be true, using for example the standard (conjectured) basis of \MZVs{} of weight 5 given by $\{\zeta(3,2),\zeta(2,3)\}$.
Although the arborified version of Hoffman’s regularisation relation is derived from Hoffman’s regularisation relation, applying Hoffman’s regularisation relation directly to the word $w=(2,1,1)$, namely considering
\[
(2,1,1)\cshuffle (1) - (\fraks)^{-1}(\fraks(2,1,1)\shuffle (y))
\]
gives
\[
\zeta(3,1,1)+\zeta(2,1,2)+\zeta(2,2,1)-\zeta(2,1,1,1)= 0.
\]
Thus the equivalence between relations obtain from Hoffman's regularisation relation and the ones obtained from the arborified version is not immediate.
\end{Egs}

\section{From Arborified to Multiple Zeta Values} \label{sec:relating_zetas}

\subsection{A conjecture regarding \AZVs}

Recall that a planar rooted tree can be defined as a non-planar rooted tree together with the total order $O_v$ on the set of direct descendants of each vertex $v$. Assuming that a tree is an oriented graph whose edges are oriented towards its root, we can write rigorously:
\begin{gather*}
    \calPT\ni(T,\{O_v \, | \, v\in V(T)\}) 
\end{gather*}
where $T$ is a non-planar rooted tree and $O_v$ is a total order on $\{v'\in V(T) \, | \, a(v')=v\}$, where $a$ is the map that maps any vertex to its immediate predecessor.
In this definition, edges are oriented towards the leaves of the tree.

Let $\Omega$ be any set and denote $\calT_{\Omega}$ the set of rooted trees decorated by $\Omega$. The construction described above easily generalises to $\Omega$-decorated rooted trees. Then forgetting the orientation set $\{O_v\, |\,  v\in V(T)\}$ we obtain a map 
\[
F_\Omega:\left\lbrace\begin{array}{rcl}
	\calPTO&\longrightarrow &\calT_\Omega \\
	(T,\{O_v\, |\, v\in V(T)\})& \longmapsto & T.
\end{array}\right.
\]
From the definitions of \AZVs{}, we directly have that if two convergent trees have the same image in $\calT_\Omega$ under the map $F_\Omega$, then their associated \AZVs{} are the same. This holds for $\Omega=\N^*$ as well as $\Omega=\{x,y\}$.

\begin{Prop} \label{prop:planarity} 
    Let $(S,T)\in\calPTNconv \times \calPTNconv$ (resp. $(S,T)\in\calPTxyconv \times \calPTxyconv$). Then
    \begin{equation*}
        F_{\N^*}(S)=F_{\N^*}(T)~\Longrightarrow~\zeta^T(S)=\zeta^T(T)\quad\left(\text{\normalfont{resp. } }F_{\{x,y\}}(S)=F_{\{x,y\}}(T)~\Longrightarrow~\zeta_\shuffle^T(S)=\zeta_\shuffle^T(T)~\right).
    \end{equation*}
    We call these relations the \emph{planarity relations}.
\end{Prop}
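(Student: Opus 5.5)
The plan is to observe that both $\zeta^T$ and $\zetaTsh$ depend on a planar tree only through its underlying poset structure and decorations, which is exactly the data preserved by $F_\Omega$. By linearity of both maps, it suffices to treat single trees $S,T\in\PTN$ (resp. in $\PT_{\{x,y\}}$) that are convergent and satisfy $F_{\N^*}(S)=F_{\N^*}(T)$ (resp. $F_{\{x,y\}}(S)=F_{\{x,y\}}(T)$). Note first that convergence is a property of the image under $F_\Omega$: the root and leaf decorations are preserved. So the hypothesis is consistent.

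The key step is to turn the equality $F_\Omega(S)=F_\Omega(T)$ into a concrete bijection. By definition of $F_\Omega$, it gives a bijection $\psi:V(S)\to V(T)$ of decorated rooted trees. Explicitly, $\psi$ sends the root to the root, satisfies $a(\psi(v))=\psi(a(v))$, and satisfies $d_T\circ\psi=d_S$; it need not respect the orders $O_v$. Since the partial order on vertices is the reflexive transitive closure of the parent relation $a$, the map $\psi$ is an isomorphism of posets: $u<v$ in $S$ if and only if $\psi(u)<\psi(v)$ in $T$.

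Next, transport the summation (resp. integration) domain along $\psi$. For the series, the map $(k_v)_{v\in V(T)}\mapsto(k_{\psi(v)})_{v\in V(S)}$ is a bijection $D_T\to D_S$, because the defining inequalities of $D_t$ are indexed by comparable pairs, which $\psi$ preserves. The summand $\prod_v k_v^{-n_v}$ is preserved because $\psi$ preserves decorations. The sums converge absolutely, since all terms are positive, so reindexing is legitimate and gives $\zeta^T(S)=\zeta^T(T)$. For the integrals, the same coordinate permutation is a linear isometry of $[0,1]^{|V|}$ mapping $\Delta_T$ onto $\Delta_S$. It has Jacobian of absolute value $1$ and carries the integrand $\prod_v g_{d_v}(t_v)$ to itself. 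By the change of variables formula for nonnegative integrands (Tonelli), $\zetaTsh(S)=\zetaTsh(T)$.

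The only point needing care is the step from equality in $\calT_\Omega$ to a poset isomorphism preserving decorations. This is essentially the definition of isomorphism of decorated rooted trees, so I do not expect a genuine obstacle; the proof is a direct verification from definition~\ref{def:azvs}.
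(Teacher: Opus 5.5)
Your proposal is correct and takes the same approach as the paper. The paper's proof is the one-line observation that the sums and integrals of definition~\ref{def:azvs} depend only on the decorated poset and not on the orders $O_v$. You make this explicit through the decoration-preserving poset isomorphism $\psi$, then reindex the series and apply a coordinate-permutation change of variables to the integrals.
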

\begin{proof}
 Direct from definition~\ref{def:azvs}: $\zeta^T(t)$ and $\zeta^T_\shuffle(s)$ do not depend on the sets $O_v$ of $t$ and $s$.
\end{proof}
We make the following conjecture, which is a natural generalisation to planar rooted trees and \AZVs{} of conjecture~\ref{conj:mzvs} for words and \MZVs{}.
\begin{conj} \label{conj:azvs}
 Any rational relation among \AZVs{} comes from the quasi-shuffle relations, the shuffle relations (theorem \ref{thm:azv_shuffle_stuffle}), arborified Hoffman's relation (theorem \ref{thm:arbo_Hoffman}), and the planarity relations (proposition \ref{prop:planarity}); together with the iterated integral representation of \AZVs{} (corollary \ref{coro:arbo_Konts}).
\end{conj}

\subsection{Relating the conjectures}

In order to relate conjectures~\ref{conj:azvs} and~\ref{conj:mzvs} we need to be more \emph{precise} in their statements. We will work here with the iterated series representation of \MZVs{} and \AZVs{}, but the similar arguments could be used for their iterated integral representation, and these two choices are equivalent.

First, let $R_W$ be the ideal of $\PolW$ generated by the quasi-shuffle and shuffle relations~\eqref{eq:shuffle_stuffle_zeta}, the double shuffle relation~\eqref{eq:Kontsevich}, and Hoffman's regularisation relations~\eqref{eq:Hoffman}. To be more precise, $R_W$ is generated as an ideal by the relations
\begin{equation*}
 w_1w_2-w_1\cshuffle w_2,\quad w_1w_2-\fraks^{-1}\left(\fraks(w_1)\shuffle\fraks(w_2)\right), \quad w\cshuffle(1)-\fraks^{-1}\left(\fraks(w)\shuffle(y)\right)
\end{equation*}
for any convergent words $w$, $w_1$ and $w_2$. In the relations above, the products $w_1w_2$ is the commutative polynomial product in $\PolW$.

Now, we can extend multiplicatively the map $\zeta:\Wcv\longrightarrow\R$ to a map (also denoted $\zeta$ for the sake of readability) $\zeta:\PolW\longrightarrow\R$. Then the known results on \MZVs{} imply that $R_W\subseteq\Ker(\zeta)$. Conjecture~\ref{conj:mzvs} states that the reverse inclusion also holds.


We can formulate conjecture~\ref{conj:azvs} in the same language. Let $R_T$ be the ideal of $\PolT$ generated as an ideal by the relations
\begin{align} \label{eq:def_RT}
\begin{split}
 ST & -S\cshuffle^T T,\quad ST-(\fraksPT)^{-1}\left(\fraksPT(S)\shuffle^T\fraksPT(T)\right),\\
 & U\cshuffle^T\tdun{1}-(\fraksPT)^{-1}\left(\fraksPT(U)\shuffle^T\tdun{y}\right),\quad S_1-S_2 
\end{split}
\end{align}
for any convergent trees $S$, $T$, $U$, and any convergent trees $S_1$ and $S_2$ such that ${F_{\N^*}(S_1)=F_{\N^*}(S_2)}$. Here, the products $ST$ in the first two relations are the commutative polynomial products in $\PolT$.

\begin{Rq}
	The quasi-shuffle product $\cshuffle^T$ is not commutative but the polynomial product in $\PolT$ is, so we have
	\[
	ST - S\cshuffle T \in R_T \text{ and } TS - T\cshuffle S \in R_T\text{ so } \zeta^T(T\cshuffle S - S \cshuffle T) =0. 
	\]
\end{Rq}

Again, extending multiplicatively the map $\zeta^T$ to a map (that we denote with the same symbol) $\zeta^T:\PolT\longrightarrow\R$, we have shown in this paper that $R_T\subseteq \Ker(\zetaT)$. Conjecture~\ref{conj:azvs} then states that this inclusion is an equality.


To relate these two conjectures, let us also multiplicatively extend the map $\flaten_1$ to a map $\Flaten_1:\PolT\longrightarrow\PolW$. It is linear and its action on monomials is given by
\begin{equation*}
 \Flaten_1\left(\prod_{i\in I}(T_i)^{n_i}\right):=\prod_{i\in I}\flaten_1(T_i)^{n_i}
\end{equation*}
where the products are the commutative polynomial products in $\PolT$ and $\PolW$ respectively and $I$ is a finite set The conjectures are linked thanks to the following technical lemma.
\begin{Lemme} \label{lem:flaten_ideals}
 We have $\Flaten_1(R_T)\subseteq R_W$.
\end{Lemme}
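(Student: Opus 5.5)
Since $\Flaten_1$ is an algebra morphism between the commutative polynomial algebras $\PolT$ and $\PolW$ (it is defined multiplicatively) and $R_W$ is an ideal, it suffices to check that $\Flaten_1$ sends each of the four families of generators listed in \eqref{eq:def_RT} into $R_W$. Indeed, a general element of $R_T$ is a finite sum $\sum_j P_j g_j$ with $P_j\in\PolT$ and $g_j$ a generator, and then $\Flaten_1(\sum_j P_jg_j)=\sum_j\Flaten_1(P_j)\Flaten_1(g_j)\in R_W$ as soon as each $\Flaten_1(g_j)\in R_W$. Throughout we use theorem~\ref{thm:AZV_flaten} ($\flaten_1(\calPTNconv)=\Wcv$), so all images below are polynomials in convergent words.

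For the quasi-shuffle generators, lemma~\ref{lemma:flat_dend_morph} (applied with $\Omega=\N^*$, a commutative semigroup) gives $\flaten_1(S\cshuffle^T T)=\flaten_1(S)\cshuffle\flaten_1(T)$. Hence $\Flaten_1(ST-S\cshuffle^T T)=w_1w_2-w_1\cshuffle w_2$ with $w_1=\flaten_1(S)$, $w_2=\flaten_1(T)$, after expanding both by linearity. This is a linear combination of products of generators of $R_W$: expanding $w_1=\sum a_iu_i$ and $w_2=\sum b_jv_j$, we get $w_1w_2-w_1\cshuffle w_2=\sum a_ib_j(u_iv_j-u_i\cshuffle v_j)$. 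For the planarity generators, $\flaten_1(S_1)=\flaten_1(S_2)$ whenever $F_{\N^*}(S_1)=F_{\N^*}(S_2)$. This holds because $\flaten_1$ only involves the commutative product $\cshuffle$ on the children of each vertex, so it is invariant under reordering the children; a quick induction on the number of vertices makes this rigorous. Hence $\Flaten_1(S_1-S_2)=0\in R_W$.

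For the shuffle and Hoffman generators, the key tool is lemma~\ref{Lem:Hoffman_step}: $\flaten_1\circ(\fraksPT)^{-1}=\fraks^{-1}\circ\flaten_0$ on $\calPTxy^{\rm strict}$. This lemma applies because lemma~\ref{lem:im_spt_y} (and its proof, which shows that $\calPTxy^{\rm strict}$ is stable under $\shuffle^T$) ensures $\fraksPT(S)\shuffle^T\fraksPT(T)\in\calPTxy^{\rm strict}$. Combining it with lemma~\ref{lemma:flat_dend_morph} for $\flaten_0$ and theorem~\ref{thm F.} ($\flaten_0\circ\fraksPT=\fraks\circ\flaten_1$), we compute
\[\flaten_1\big((\fraksPT)^{-1}(\fraksPT(S)\shuffle^T\fraksPT(T))\big)=\fraks^{-1}\big(\flaten_0\fraksPT(S)\shuffle\flaten_0\fraksPT(T)\big)=\fraks^{-1}\big(\fraks(w_1)\shuffle\fraks(w_2)\big).\]
Thus the image of the second generator is $w_1w_2-\fraks^{-1}(\fraks(w_1)\shuffle\fraks(w_2))$, which lies in $R_W$ by bilinear expansion as before. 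The Hoffman generator is handled exactly as in the proof of theorem~\ref{thm:arbo_Hoffman}. Using $\flaten_0(\tdun{y})=(y)$ and $\flaten_1(\tdun{1})=(1)$, its image is $w\cshuffle(1)-\fraks^{-1}(\fraks(w)\shuffle(y))$ with $w=\flaten_1(U)$, which is a linear combination of Hoffman generators of $R_W$.

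The main obstacle is bookkeeping rather than conceptual. First, the generators of $R_W$ are stated for single convergent words, so we need the bilinear or linear expansion, and we need each constituent word to be convergent; this is guaranteed by the convergence statements in theorem~\ref{thm:AZV_flaten}. Second, $\fraks^{-1}$ must be well defined on the relevant elements. I would handle this by noting that $\fraks$ is injective, so the equality in lemma~\ref{Lem:Hoffman_step} already shows the arguments lie in the image of $\fraks$.
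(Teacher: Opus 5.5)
Your proposal is correct and follows essentially the same route as the paper: you reduce to the four families of generators using multiplicativity of $\Flaten_1$, treat the quasi-shuffle, shuffle and Hoffman generators with lemma~\ref{lemma:flat_dend_morph}, equation~\eqref{eq:trivial_change} and the lower square of diagram~\eqref{eq:Ku-Yu_main_result}, and handle the planarity generators through commutativity of $\cshuffle$. Your extra bookkeeping makes explicit three points the paper leaves implicit: expanding $\flaten_1(S)$ bilinearly into convergent words, checking that $\fraksPT(S)\shuffle^T\fraksPT(T)$ lies in $\calPTxy^{\rm strict}$, and justifying that $\fraks^{-1}$ is well defined.
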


\begin{proof}
By linearity and multiplicativity of $\Flaten_1$, it is enough to show that it sends the generating relations~\eqref{eq:def_RT} of $R_T$ to elements of $R_W$. We show that this holds for each family of relations independently. In the following, $S$, $T$, and $U$ are any convergent trees and $S_1$ and $S_2$ are any convergent trees such that $F_{\N^*}(S_1)=F_{\N^*}(S_2)$.
 \begin{itemize}
  \item Since $S\cshuffle^T T\in\calPTNconv$ we have
  \begin{align*}
   \Flaten_1(ST -S\cshuffle^T T) & =\flaten_1(S)\flaten_1(T)-\flaten_1(S\cshuffle^T T) \\
   & = \flaten_1(S)\flaten_1(T)-\flaten_1(S)\cshuffle\flaten_1(T) \\
   &\in R_W.
  \end{align*}
  For the second equality, we have used the tridendriform part of Lemma~\ref{lemma:flat_dend_morph}, i.e. that $\flaten_1$ is an algebra morphism for the quasi-shuffle products.
  
  \item Again, we have $(\fraksPT)^{-1}\left(\fraksPT(S)\shuffle^T\fraksPT(T)\right)\in\calPTNconv$. Therefore we can write
  \begin{align*}
				&\Flaten_1  \left(ST - (\fraksPT)^{-1}\left(\fraksPT(S) \shuffle^T \fraksPT(T) \right)\right) \\
				  =&\flaten_1(S)\flaten_1(T) - \flaten_1\left( (\fraksPT)^{-1}\left(\fraksPT(S) \shuffle^T \fraksPT(T) \right)\right)\\
				=& \flaten_1(S) \flaten_1(T) - \fraks^{-1} \circ \flaten_0 \left( \fraksPT(S) \shuffle^T \fraksPT(T)\right) \tag{by equation~\eqref{eq:trivial_change}} \\
				=& \flaten_1(S)  \flaten_1(T) - \fraks^{-1} \left( \flaten_0 \left(\fraksPT(S)\right) \shuffle \flaten_0  \left(\fraksPT(T)\right)\right) \tag{by lemma~\ref{lemma:flat_dend_morph}}\\
				=& \flaten_1(S)  \flaten_1(T) - \fraks^{-1} \big(  \fraks\left(\flaten_1(S)\right) \shuffle \fraks \left(\flaten_1(T)\right)\big) \\
				\in& R_W.
			\end{align*}
The last equality comes from the lower part of diagram~\eqref{eq:Ku-Yu_main_result}.
  \item Next, we have by lemma~\ref{Lem:Hoffman_conv} that $U\cshuffle^T\tdun{1}-(\fraksPT)^{-1}\left(\fraksPT(U)\shuffle^T\tdun{y}\right)\in\calPTNconv$. Therefore 
  \begin{align*}
   &\Flaten_1  \left(U\cshuffle^T\tdun{1}-(\fraksPT)^{-1}\left(\fraksPT(U)\shuffle^T\tdun{y}\right)\right) \\
    =& \flaten_1\left(U\cshuffle^T \tdun{1} - (\fraksPT)^{-1}(\fraksPT(U)\shuffle^T\tdun{y})\right) \\
    =& \flaten_1(U) \cshuffle (1) - \flaten_1\circ (\fraksPT)^{-1}(\fraksPT(U)\shuffle^T\tdun{y}) \tag{by lemma~\ref{lemma:flat_dend_morph}} \\
    =& \flaten_1(U) \cshuffle (1) - \fraks^{-1}\circ\flaten_0(\fraksPT(U)\shuffle^T\tdun{y}) \tag{by equation~\eqref{eq:trivial_change}} \\
    =& \flaten_1(U) \cshuffle (1) - \fraks^{-1}\circ\left(\flaten_0\circ\fraksPT(U)\shuffle (y)\right) \tag{by lemma~\ref{lemma:flat_dend_morph}} \\
    =& \flaten_1(U) \cshuffle (1) - \fraks^{-1}\circ\left(\fraks\circ\flaten_1(U)\shuffle (y)\right) \in R_W.
  \end{align*}
Once again, the last equality comes from the lower part of diagram~\eqref{eq:Ku-Yu_main_result}.
  \item Finally, if $S_1$ and $S_2$ are such that $F_{\N^*}(S_1)=F_{\N^*}(S_2)$ we have by the commutativity of the quasi-shuffle product of words $\cshuffle$ that $\Flaten_1(S_1)=\Flaten_1(S_2)$. Therefore, the last family of relations in~\eqref{eq:def_RT} are mapped to $0\in R_W$ in $\PolW$ by $\Flaten_1$.
 \end{itemize}
 Thus each relation defining $R_T$ is mapped to a relation defining $R_W$. So ${\Flaten_1(R_T)\subseteq R_W}$.
\end{proof}
We can now prove the main result of this section.
\begin{thm} \label{thm:relating_conj}
    Conjecture~\ref{conj:azvs} implies conjecture~\ref{conj:mzvs}.
\end{thm}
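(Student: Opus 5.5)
We assume conjecture~\ref{conj:azvs}, i.e. $\Ker(\zetaT)=R_T$ in $\PolT$, and aim to show $\Ker(\zeta)\subseteq R_W$ in $\PolW$. The reverse inclusion $R_W\subseteq\Ker(\zeta)$ is already known. The strategy is to transport an element of $\Ker(\zeta)$ to the tree side through a section of $\Flaten_1$, apply the conjecture there, and come back with lemma~\ref{lem:flaten_ideals}.

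Concretely, we extend the embedding $\iota$ of equation~\eqref{eq:iota_map} multiplicatively to a map $\mathrm{I}:\PolW\longrightarrow\PolT$. On monomials it acts by $\mathrm{I}\big(\prod_i w_i^{n_i}\big)=\prod_i\iota(w_i)^{n_i}$. This is well defined because $\iota$ sends convergent words to convergent ladder trees: the first letter of $w$ becomes the decoration of the root of $\iota(w)$, so it is not $1$.

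We then check two identities. First, $\Flaten_1\circ\mathrm{I}=\mathrm{id}_{\PolW}$. This holds because $\flaten_1\circ\iota=\mathrm{id}$ on words: for a ladder, each $B_+^{w_j}$ has a single child, so definition~\ref{defi:flat_map} just concatenates the letters. Both sides are multiplicative and linear, so the identity extends. Second, $\zetaT\circ\mathrm{I}=\zeta$. This holds since $\zeta=\zetaT\circ\iota$ on $\Wcv$ (stated after equation~\eqref{eq:iota_map}), and both extended maps are multiplicative.

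Now let $P\in\Ker(\zeta)$. Then $\zetaT(\mathrm{I}(P))=\zeta(P)=0$, so $\mathrm{I}(P)\in\Ker(\zetaT)=R_T$ by conjecture~\ref{conj:azvs}. Applying $\Flaten_1$ and lemma~\ref{lem:flaten_ideals} gives $P=\Flaten_1(\mathrm{I}(P))\in\Flaten_1(R_T)\subseteq R_W$. Hence $\Ker(\zeta)\subseteq R_W$, which is conjecture~\ref{conj:mzvs}.

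The substantive work was already done in lemma~\ref{lem:flaten_ideals}. The only points needing care here are routine: the well-definedness of $\mathrm{I}$ on convergent words, and the multiplicativity of $\zetaT$ and $\zeta$ after extension to polynomial algebras, which holds by definition. No real obstacle is expected.
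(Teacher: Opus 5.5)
Your proposal is correct and follows essentially the same route as the paper. The paper also extends $\iota$ multiplicatively, uses $\Flaten_1\circ\iota=\mathrm{Id}$ and $\zeta^T\circ\iota=\zeta$ to place $\iota(R)$ in $\Ker(\zeta^T)=R_T$, and concludes with lemma~\ref{lem:flaten_ideals}; your added remark that $\iota$ preserves convergence is a welcome detail that the paper leaves implicit.
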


\begin{proof}
 Let us assume that conjecture~\ref{conj:azvs} holds, and let $R\in\PolW$. Assume $R\in\Ker(\zeta)$, we want to prove that we then have $R\in R_W$.
 
 Recall that $\iota:\calW_{\N^*}\longrightarrow\calPTN$ is the canonical embedding of words into planar rooted trees from equation~\eqref{eq:iota_map}. Let us expand $\iota$ as a multiplicative map, written with the same symbol, $\iota:\PolW\longrightarrow\PolT$. Notice that since $\flaten_1\circ\iota={\rm Id}_{\calW_{\N^*}}$ we have $\Flaten_1\circ\iota={\rm Id}_{\PolW}$.
 
 Then since $\iota$, $\zeta$, and $\zeta^T$ have been extended by multiplicativity of $\PolW$ and $\PolT$ we have $\iota(R)\in\Ker(\zeta^T)$. Then conjecture~\ref{conj:azvs} implies $\iota(R)\in R_T$. Applying lemma~\ref{lem:flaten_ideals} gives
 \begin{equation*}
  R_W \ni \Flaten_1\circ\iota(R)=R.
 \end{equation*}
 Therefore, conjecture~\ref{conj:mzvs} is, as claimed, a consequence of  conjecture~\ref{conj:azvs}.
\end{proof}


\section{Conclusion and remaining questions}
	
	Thanks to dendriform and tridendriform structures, we introduced a shuffle product for trees which, combined with the binarisation map from the third author's work, enable us to state conjecture~\ref{conj:azvs} implying the usual one (conjecture~\ref{conj:mzvs}) for \MZVs{}. This answers the question asked by Manchon in 2016~\cite{Manchon_16}. For further studies, it will be interesting to look at:
	\begin{itemize}
		\item is conjecture~\ref{conj:azvs} equivalent to conjecture~\ref{conj:mzvs}~? We proved one implication but the other one  (which we expect to be true as well) requires new tools lying beyond the scope of this work.
		\item is the binarisation map of definition~\ref{defi:error_tree} compatible with the algebraic structures introduced in section~\ref{sec:tridend} ? This is not the case for words but could be for trees.
		\item these dendriform and tridendriform algebras over trees have rigid Hopf algebraic structures. Can these
		provide tools to describe the relations among \AZVs{} ? 
		\item can a motivic version of \AZVs{} could be defined and studied in order to tackle conjecture~\ref{conj:azvs} and therefore conjecture~\ref{conj:mzvs} ?
	\end{itemize} 

\addcontentsline{toc}{section}{References}

	\bibliography{biblio_MZVs_20260901}
	\bibliographystyle{plain}
	
\end{document}